 \definecolor{myblue}{HTML}{003399}
\newtheorem{thm}{Theorem}[section]
\newtheorem{lem}[thm]{Lemma}
\newtheoremstyle{normal}{}{}{\normalfont}{}{\bfseries}{.}{ }{}
\theoremstyle{normal}
\newtheorem{ax}[thm]{Remark}
\newcommand{\ts}{\hspace{1pt}}
\newcommand{\Rc}{\mathcal{R}}
\newcommand{\Pc}{\mathcal{P}}
\newcommand{\Zc}{\mathcal{Z}}
\newcommand{\proj}{\mathop{\pi^{\hhat^+}}} 
\newcommand{\projkq}{\mathop{\pi^{\hhat^+}_{kQ}}}
\newcommand{\projqk}{\mathop{\pi^{\hhat^+}_{Q(k)}}}
\newcommand{\g}{\mathfrak{g}}
\newcommand{\s}{\mathfrak{s}}
\newcommand{\sll}{\mathfrak{sl}}
\newcommand{\slhat}{\widehat{\mathfrak{sl}}}
\newcommand{\h}{\mathfrak{h}}
\newcommand{\ntilde}{\widetilde{\mathfrak{n}}}
\newcommand{\hhat}{\widehat{\mathfrak{h}}}
\newcommand{\hhatp}{\widehat{\mathfrak{h}}^+}
\newcommand{\hhatm}{\widehat{\mathfrak{h}}^-}
\newcommand{\gtilde}{\widetilde{\mathfrak{g}}}
\newcommand{\ghat}{\widehat{\mathfrak{g}}}
\newcommand{\ot}{\otimes}
\newcommand{\CC}{\mathbb{C}}
\newcommand{\ZZ}{\mathbb{Z}}
\newcommand{\Ec}{\mathcal{E}}
\newcommand{\BVfi}{\mathfrak{B}_{W_{L(\Lambda)}}}
\newcommand{\fand}{\quad\text{and}\quad}
\newcommand{\Fand}{\qquad\text{and}\qquad}
\newcommand{\non}{\nonumber}
\newcommand{\beq}{\begin{equation}}
\newcommand{\eeq}{\end{equation}}
\newcommand{\vmaxo}{v_{L(k\Lambda_0)}}
\newcommand{\LLo}{L(k\Lambda_0)}
\newcommand{\W}{W_{L(\Lambda)}}
\newcommand{\vmax}{v_{ \Lambda }}
\newcommand{\al}{  \alpha}
\newcommand{\ch}{\mathop{\mathrm{ch}}}
\newcommand{\spn}{\mathop{\mathrm{span}}}
\newcommand{\chg}{\mathop{\mathrm{chg}}}
\newcommand{\chgi}{\mathop{\chg{\hspace{-1pt}}_i}}
\newcommand{\chgs}{\mathop{\chg{\hspace{-1pt}}_s}}
\newcommand{\chgsMI}{\mathop{\chg{\hspace{-1pt}}_{s-1}}}
\newcommand{\chgI}{\mathop{\chg{\hspace{-1pt}}_1}}
\newcommand{\chgl}{\mathop{\chg{\hspace{-1pt}}_l}}
\newcommand{\en}{\mathop{\mathrm{en}}}
\newcommand{\ndo}{\mathop{\mathrm{End}}}
\newcommand{\rez}{\mathop{\mathrm{Res}}}
\newcommand{\wht}{\widehat}
\newcommand{\BLcc}{\mathfrak{B}_{L(\Lambda)}}
\def\author@andify{%
  \nxandlist {\unskip ,\penalty-1 \space\ignorespaces}%
    {\unskip {} \@@and~}%
    {\unskip \penalty-2 \space \@@and~}%
}
\begin{document}

\title{Parafermionic bases of standard modules for affine Lie algebras}

\author{Marijana Butorac}
\address[M. Butorac]{Department of Mathematics, University of Rijeka, Radmile Matej\v{c}i\'{c} 2, 51\,000 Rijeka, Croatia}
\email{mbutorac@math.uniri.hr}

\author{Slaven Ko\v{z}i\'{c}}
\author{Mirko Primc} 
\address[S. Ko\v{z}i\'{c} and M. Primc]{Department of Mathematics, Faculty of Science, University of Zagreb,  Bijeni\v{c}ka cesta 30, 10\,000 Zagreb, Croatia}
\email{kslaven@math.hr}
\email{primc@math.hr}

\subjclass[2010]{Primary 17B67; Secondary 17B69, 05A19}

\keywords{affine Lie algebras, parafermionic space, combinatorial bases}

\begin{abstract} 
In this paper we construct combinatorial bases of parafermionic spaces associated with the   standard modules of the rectangular highest weights for the untwisted affine Lie algebras. Our construction is a modification of G. Georgiev's construction for the affine Lie algebra $\widehat{\mathfrak sl}(n+1,\mathbb C)$---the constructed parafermionic bases are projections of the quasi-particle bases of the principal subspaces, obtained previously in a series of papers by the first two authors. As a consequence we prove the character formula of A. Kuniba, T. Nakanishi and J. Suzuki for all non-simply-laced untwisted affine Lie algebras.
\end{abstract}

\maketitle

\tableofcontents

\section*{Introduction}

The    parafermionic currents  present a remarkable  class of nonlocal vertex operators with variables in fractional powers. First examples of parafermionic currents were   introduced by A. B. Zamolodchikov and V. A. Fateev \cite{ZF} in the context of  conformal field theories in two dimensions. Generalizing their work, D. Gepner \cite{Gep} constructed a family of solvable parafermionic conformal field theories related to the untwisted affine Kac--Moody Lie algebras at the positive integer levels.  Roughly speaking, the main building block for such conformal field theories is the so-called   parafermionic space, which is spanned by the monomials of coefficients of the parafermionic currents. 
By studying a certain correspondence between the aforementioned conformal field theories and the  Thermodynamic Bethe Ansatz, A. Kuniba, T. Nakanishi and J. Suzuki \cite{KNS} conjectured the character formulas of the parafermionic spaces associated to the untwisted affine   Lie algebras at the positive integer levels.
These character formulas, which can be expressed as Rogers--Ramanujan-type sums, were  proved by G. Georgiev \cite{G2} in the simply laced case. In this paper, we prove the  character formulas of Kuniba, Nakanishi and Suzuki in the non-simply laced case, thus completing the verification of  their conjecture.

Now we describe the main result of this paper, the construction of the so-called parafermionic bases. 
Let 
$$\gtilde=\mathfrak{g}\ot\CC[t,t^{-1}]\oplus\CC c \oplus\CC d$$
be the affine Kac--Moody Lie algebra associated with the simple Lie algebra $\mathfrak{g}$ of rank $l$.  Denote by $\alpha_1,\ldots ,\alpha_l$   the positive simple roots of $\mathfrak{g}$. Let $\Lambda_0,\ldots , \Lambda_l$ be the fundamental weights of  $\gtilde$. Consider the standard $\gtilde$-modules   $L(\Lambda)$, i.e. the integrable highest weight modules for $\gtilde$, of highest weight  $\Lambda$ of the form
\beq\label{rect2}
\Lambda=k_0\Lambda_0+k_j\Lambda_j\quad\text{for}\quad k_0,k_j\in\ZZ_{\geqslant 0}\quad\text{such that}\quad k=k_0+k_j>0 
\eeq
with $j$ as in \eqref{jotovi}.
Let  $W_{L(\Lambda)}$  be the {\em principal subspace} of 
 $L(\Lambda)$ . The notion of principal subspace goes back to B. L. Feigin and A. V. Stoyanovsky \cite{FS}. These subspaces of standard modules posses the so-called {\em quasi-particle bases}, which we denote by  $\mathfrak{B}_{W_{L(\Lambda)}}$.    The bases are  expressed  in terms of monomials of  {\em quasi-particles}, i.e. of 
certain operators  $x_{n\alpha_i}(r)\in\ndo L(\Lambda)$, $i=1,\ldots ,l$,  which are applied on the highest weight vector  $v_{L(\Lambda)}$ of $L(\Lambda)$. The quasi-particles 
  are organized into mutually local vertex operators
$$
x_{n\alpha_i}(z)  =\sum_{r\in\ZZ} x_{n\alpha_i}(r) z^{-r-n}\in\ndo L(\Lambda)[[z^{\pm 1}]],\quad n\geqslant 1,\, i=1,\ldots ,l.
$$

The   significance of the  quasi-particle bases lies in the   interpretation of the sum sides of various Rogers--Ramanujan-type identities which they provide. Such bases    were established by B. L. Feigin and A. V. Stoyanovsky \cite{FS} for $\mathfrak{g}$ of type $A_1$. Their results were further generalized by G. Georgiev \cite{G1} to $\mathfrak{g}$ of type $A_l$ for all principal subspaces  $W_{L(\Lambda) }$  of the highest weight $\Lambda$ as in \eqref{rect2}. Finally, the quasi-particle bases of  $W_{L(  \Lambda )}$ for all $\Lambda$ as in \eqref{rect2} were constructed by the first author for $\mathfrak{g}$ of types  $B_l,C_l,F_4,G_2$ \cite{Bu1,Bu2,Bu3,Bu5}  and by the first and the second author  for $\mathfrak{g}$ of types $D_l,E_6,E_7,E_8 $ \cite{BK}.    

The major step towards finding the    parafermionic bases is the construction of the suitable bases of the standard modules. This construction relies on the quasi-particle bases  of the corresponding principal subspaces from \cite{G1,Bu1,Bu2,Bu3,Bu5,BK,FS}.
Consider  the   subalgebras 
$ 
\hhat^\pm=\h \otimes t^\pm\CC \left[t\right]  $
of $\gtilde$, where $\mathfrak{h}$ is a Cartan subalgebra of $\mathfrak{g}$. We express the bases   for   $L( \Lambda )$ as
\beq\label{intro_stan}
 \BLcc= \left\{ e_{\mu}\ts h\ts b  \,\,\big|\big.\,\, \mu \in Q\sp\vee,\, h \in B_{U(\hhatm)},\, b \in \mathfrak{B}'_{W_{  L( \Lambda )} }\right\},
\eeq
where $e_\mu$ denote the Weyl group translation operators parametrized by the elements of the coroot lattice $Q\sp\vee$ of the simple Lie algebra  $\mathfrak{g}$, the set $B_{U(\hhatm)}$ is the Poincar\'{e}--Birkhoff--Witt-type basis of the universal enveloping algebra $U(\hhatm)$ and   $\mathfrak{B}'_{W_{L(\Lambda)} }$  is a certain subset of  $  \mathfrak{B}_{W_{L( \Lambda )} }$. We verify that  set \eqref{intro_stan} spans     $L( \Lambda )$  by using the relations among quasi-particles  and arguing as in \cite{Bu1,Bu2,Bu3,Bu5,BK}. On the other hand, our proof of linear independence relies on generalizing   Georgiev's  arguments originated in \cite{G1} to standard modules for all untwisted affine Lie algebras.

Next, we turn our attention to the  {\em vacuum space of the standard module} $L(  \Lambda )$,  
$$
L(  \Lambda )^{\hhat^+} =\left\{v\in L(  \Lambda )\,\big|\big.\,\, \hhat^+ \hspace{-1pt}\cdot\hspace{-1pt} v=0\right\}.
$$
The direct sum decomposition of the standard module,
$$
L(  \Lambda )=L(  \Lambda )^{\hhatp}\oplus\, \hhatm U(\hhatm)\hspace{-1pt}\cdot\hspace{-1pt} L(  \Lambda )^{\hhat^+},
$$
defines the projection
$$
\proj\colon L(  \Lambda ) \to L(  \Lambda )^{\hhat^+}.
$$
In parallel with   Georgiev's construction in the   $\mathfrak{g}= \mathfrak{sl}_{l+1}$ case \cite{G2}, by considering the image of  \eqref{intro_stan} with respect to the projection,
we find the following  basis  of the vacuum spaces $L(  \Lambda )^{\hhat^+}$  for $\mathfrak{g}$ of other types, 
\beq\label{intro_vac}
\mathfrak{B}_{L(  \Lambda )^{\hhat^+}}= \left\{    e_{\mu}\proj \hspace{-3pt} \cdot\ts b  \,\,\big|\big.\,\, \mu \in Q\sp\vee,\, b \in \mathfrak{B}'_{W_{L(  \Lambda )} }\right\}.
\eeq

Recall the Lepowsky--Wilson's $\Zc$-operators
$$
\Zc_{n\alpha_i} (z)=\sum_{r\in\ZZ} \Zc_{n\alpha_i}(r) z^{-r-n}\in\ndo L(  \Lambda )[[z^{\pm 1}]],\quad n\geqslant 1,\, i=1,\ldots ,l ,
$$
  which commute with the action of the Heisenberg subalgebra $ \hhat^+ \oplus \hhat^-\oplus \CC c$ on the standard module $L(  \Lambda )$; see \cite{LW1,LP}. Their coefficients $\Zc_{n\alpha_i}(r)$ can be regarded as operators $L(  \Lambda )^{\hhat^+}\to L(  \Lambda )^{\hhat^+}$. It is worth noting that the projection $\proj$ maps the formal series $x_{n\alpha_i} (z)v_{L(  \Lambda )}$ to $\Zc_{n\alpha_i} (z)v_{L(  \Lambda )}$. In fact, the elements of  vacuum space  bases \eqref{intro_vac} are of the form  $e_\mu b'v_{L(  \Lambda )}$, where $\mu\in Q^\vee$ and $b'$  is a  monomial  of $\Zc$-operators $\Zc_{n\alpha_i}(r)$ whose charges and energies satisfy  certain constraints.

Finally, we consider the parafermionic spaces.  The notion of parafermionic space in the   $\mathfrak{g}= \mathfrak{sl}_{l+1}$ case, which can be directly generalized to the simply laced case, was introduced by G. Georgiev \cite{G2}. Unfortunately,  his definition relies on the lattice vertex operator construction which we do not have at our disposal in the non-simply laced case. Therefore,  we had to  slightly alter the  original approach. 
Introduce the integers $k_{\alpha_i}=2k/\hspace{-2pt}\left<\alpha_i,\alpha_i\right>$ defined with respect to the suitably normalized nondegenerate
invariant symmetric bilinear form $\left<\cdot,\cdot\right>$ on $\mathfrak{h}^*$.
We define the {\em parafermionic space of highest weight} $  \Lambda $  as
\beq\label{intro_para2}
L(  \Lambda )_{Q(k)}^{\hhat^+}=
\coprod_{\substack{0\leqslant m_1\leqslant k_{\al_1} -1\vspace{-6pt} \\ \vdots\\ 0\leqslant m_l\leqslant k_{\al_l} -1}}
L(  \Lambda )_{  \Lambda  +m_1\al_1+\ldots +m_l\al_l}^{\hhat^+},
\eeq
where  $L(  \Lambda )_{\mu}^{\hhat^+}$ denote the $\mu$-weight subspaces of the vacuum space $L(  \Lambda )^{\hhat^+}$. 
In the simply laced case, such definition turns into   original Georgiev's notion of parafermionic space. We should mention that, in the non-simply laced case, there exists another, related notion of the parafermionic space which, in contrast with \eqref{intro_para2}, possesses a generalized vertex operator algebra structure; see \cite{Li}. However, as we do not make use of the generalized vertex algebra theory in this paper and the  construction in \eqref{intro_para2} is easy enough to handle, there was no need to adapt our setting to that of  \cite{Li}.

As with the simply laced case \cite{G2}, we can define the {\em parafermionic projection}
$$
\pi_{Q(k)}^{\hhat^+}\colon L(  \Lambda )^{\hhat^+}\to L(  \Lambda )_{Q(k)}^{\hhat^+}.
$$
By employing the projection, we obtain our main result, the construction of bases  of the parafermionic spaces 
of highest weights $  \Lambda $   as in \eqref{rect2} which are given by
\beq\label{intro_para}
\mathfrak{B}_{L(  \Lambda )_{Q(k)}^{\hhat^+}}= \left\{ \pi_{Q(k)}^{\hhat^+}\hspace{-4pt}\cdot\big(\proj \hspace{-4pt}\cdot  \ts b\big)  \,\,\big|\big.\,\,   b \in \mathfrak{B}'_{W_{L(  \Lambda )} }\right\}.
\eeq
These  bases generalize   Georgiev's construction  \cite{G2} to the non-simply laced types.

In order to handle the elements of bases \eqref{intro_para}, we introduce the {\em parafermionic currents of charge} $n\geqslant 1$   by
$$
\Psi_{n\al_i} (z)  = \sum_{r\in\frac{n}{k_{\al_i}}+\ZZ} \psi_{n\al_i} (r) z^{-r-n}
=\mathcal{Z}_{n\al_i} (z)z^{-n\al_i / k}
\in z^{- n/k_{\al_i}}\ndo L(  \Lambda )^{\hhat^+}[[z^{\pm 1}]]
$$
for $i=1,\ldots ,l$, where $z^{-n\al_i / k}$ is a certain operator on the vacuum space $L(  \Lambda )^{\hhat^+}$.
The  coefficients $\psi_{n\al_i} (r)$ can be regarded as operators $L(  \Lambda )^{\hhat^+}_{Q(k)}\to L(  \Lambda )^{\hhat^+}_{Q(k)}$. Moreover,  the elements of   parafermionic space   bases \eqref{intro_para} are of the form  $  b''v_{L(  \Lambda )}$, where   $b''$  is a  monomial  of  operators $\psi_{n\al_i} (r)$ whose charges and energies satisfy certain constraints and $v_{L(  \Lambda )}$ now denotes the image of the highest weight vector with respect to the parafermionic projection.

The characters of   conformal field theories, as well as   their connection to Rogers--Ramanujan-type identities, were extensively studied in  physics literature; see, e.g.,  \cite{BG,DKKMM,GG,KKMM,KNS} and the references therein. 
In particular, fermionic formulas   related to certain tensor products of standard modules, which generalize   those in \cite{KNS}, were studied in \cite{HKKOTY,HKOTY} and, more recently, in \cite{DFK,Lin}, in the setting of quantum affine algebras.
The problem of developing the precise mathematical foundation for parafermionic conformal field theories led to important generalizations of the notion of vertex operator algebra; see, e.g.,  the book by C. Dong and J. Lepowsky \cite{DL}. In particular, as demonstrated by H.-S. Li \cite{Li}, the vacuum space $L(k\Lambda_0)^{\hhat^+}$ possesses  the structure of generalized vertex algebra. Hence it is equipped by a certain grading operator  $L_\Omega(0)$; see   also \cite{DL,G2}. We show that the corresponding induced operator on the parafermionic space  $L(\Lambda)_{Q(k)}^{\hhat^+}$ is the {\em parafermionic grading operator}.
More specifically, we check that the elements of   parafermionic basis \eqref{intro_para} are its eigenvectors.  
Finally, we use their so-called conformal energies, i.e. the eigenvalues of the parafermionic basis elements  with respect to $L_\Omega(0)$, to calculate the characters for the parafermionic spaces, thus recovering the character formulas of  A. Kuniba, T. Nakanishi and J. Suzuki \cite{KNS}.

\section{Preliminaries}\label{s:1}
\numberwithin{equation}{section}

\subsection{Modules of affine Lie algebras}\label{ss:11}
Let $\g$ be a complex simple Lie algebra of rank $l$, $\h$ its Cartan subalgebra and $R$ the system of roots. Let $Q \subset P \subset \h^{\ast}$ and $Q\sp\vee \subset P\sp\vee \subset \h$ be the root, weight, coroot and coweight lattices respectively. 
Let $\left< \cdot, \cdot \right>$ be a nondegenerate invariant symmetric bilinear form   on $\mathfrak{g}$. Using the form one can identify  $\h$ and $\h^{\ast}$  via $\left< \al,  h \right> =\al(h)$ for   $\alpha\in \h^{\ast}$ and $h\in\h$. We fix the form so that $\langle \al, \al \rangle=2$ if $\al$ is a long root.

Fix simple roots $\al_1, \ldots, \al_l$ and let $R_{+}$ ($R_{-}$) be the set of positive (negative) roots. Denote by $\theta$ the highest root. We take labelings of the Dynkin diagrams as in  Figure \ref{figure}.

\tikzset{node distance=1.8em, ch/.style={circle,draw,on chain,inner sep=2pt},chj/.style={ch,join},every path/.style={shorten >=5pt,shorten <=5pt},line width=1pt,baseline=-1ex}

\newcommand{\alabel}[1]{%
  \(\alpha_{\mathrlap{#1}}\)
}

\newcommand{\mlabel}[1]{%
  \(#1\)
}

\let\dlabel=\alabel
\let\ulabel=\mlabel

\newcommand{\dnode}[2][chj]{%
\node[#1,label={below:\dlabel{#2}}] {};
}

\newcommand{\dnodenj}[1]{%
\dnode[ch]{#1}
}

\newcommand{\dnodebr}[1]{%
\node[chj,label={right:\dlabel{#1}}] {};
}

\newcommand{\dydots}{%
\node[chj,draw=none,inner sep=1pt] {\dots};
}

\newcommand{\QRightarrow}{%
\begingroup
\tikzset{every path/.style={}}%
\tikz \draw (0,3pt) -- ++(1em,0) (0,1pt) -- ++(1em+1pt,0) (0,-1pt) -- ++(1em+1pt,0) (0,-3pt) -- ++(1em,0) (1em-1pt,5pt) to[out=-75,in=135] (1em+2pt,0) to[out=-135,in=75] (1em-1pt,-5pt);
\endgroup
}

\newcommand{\QLeftarrow}{%
\begingroup
\tikz
\draw[shorten >=0pt,shorten <=0pt] (0,3pt) -- ++(-1em,0) (0,1pt) -- ++(-1em-1pt,0) (0,-1pt) -- ++(-1em-1pt,0) (0,-3pt) -- ++(-1em,0) (-1em+1pt,5pt) to[out=-105,in=45] (-1em-2pt,0) to[out=-45,in=105] (-1em+1pt,-5pt);
\endgroup
}

\begin{align*} 
&A_l   &&\hspace{-15pt}
\begin{tikzpicture}[start chain]
\dnode{1}
\dnode{2}
\dydots
\dnode{l-1}
\dnode{l}
\end{tikzpicture}
&&B_l  &&\hspace{-15pt}
\begin{tikzpicture}[start chain]
\dnode{1}
\dnode{2}
\dydots
\dnode{l-1}
\dnodenj{l}
\path (chain-4) -- node[anchor=mid] {\(\Rightarrow\)} (chain-5);
\end{tikzpicture}
 \\\\
&C_l  &&\hspace{-15pt}
\begin{tikzpicture}[start chain]
\dnode{l}
\dnode{l-1}
\dydots
\dnode{2}
\dnodenj{1}
\path (chain-4) -- node[anchor=mid] {\(\Leftarrow\)} (chain-5);
\end{tikzpicture}
&&D_l  &&\hspace{-15pt}
\begin{tikzpicture}
\begin{scope}[start chain]
\dnode{1}
\dnode{2}
\node[chj,draw=none] {\dots};
\dnode{l-2}
\dnode{l-1}
\end{scope}
\begin{scope}[start chain=br going above]
\chainin(chain-4);
\dnodebr{l}
\end{scope}
\end{tikzpicture}
 \\\\
&E_6 &&\hspace{-15pt}
\begin{tikzpicture}
\begin{scope}[start chain]
\foreach \dyni in {1,...,5} {
\dnode{\dyni}
}
\end{scope}
\begin{scope}[start chain=br going above]
\chainin (chain-3);
\dnodebr{6}
\end{scope}
\end{tikzpicture}
&&E_7 &&\hspace{-15pt}
\begin{tikzpicture}
\begin{scope}[start chain]
\foreach \dyni in {1,...,6} {
\dnode{\dyni}
}
\end{scope}
\begin{scope}[start chain=br going above]
\chainin (chain-3);
\dnodebr{7}
\end{scope}
\end{tikzpicture}
 \\\\
&E_8 &&\hspace{-15pt}
\begin{tikzpicture}
\begin{scope}[start chain]
\foreach \dyni in {1,...,7} {
\dnode{\dyni}
}
\end{scope}
\begin{scope}[start chain=br going above]
\chainin (chain-5);
\dnodebr{8}
\end{scope}
\end{tikzpicture}
&&F_4 &&\hspace{-15pt}
\begin{tikzpicture}[start chain]
\dnode{1}
\dnode{2}
\dnodenj{3}
\dnode{4}
\path (chain-2) -- node[anchor=mid] {\(\Rightarrow\)} (chain-3);
\end{tikzpicture}
 \\\\
&G_2 &&\hspace{-15pt}
\begin{tikzpicture}[start chain]
\dnodenj{1}
\dnodenj{2}
\path (chain-1) -- node {\(\Rrightarrow\)} (chain-2);
\end{tikzpicture}
\end{align*}
\begingroup\vspace*{-\baselineskip}

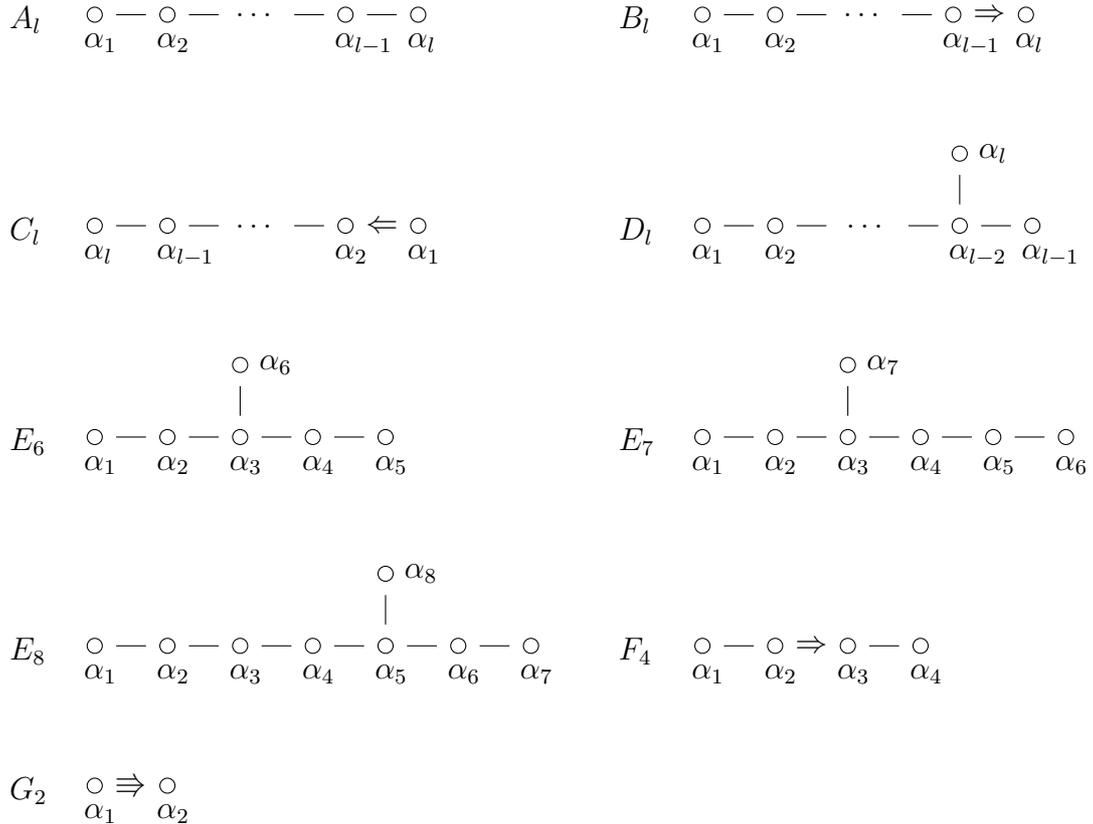
\captionof{figure}{Finite Dynkin diagrams}\label{figure}
\vspace*{\baselineskip}\endgroup

Simple roots span over $\ZZ$ the root lattice $Q$. The coweight lattice $P\sp\vee$ consists of elements on which the simple roots take integer values. The coroot lattice $Q\sp\vee$ is spanned by the simple coroots $\al\sp\vee_i$, $i=1, \ldots, l$, and the the weight lattice $P$ is spanned by the fundamental weights $\omega_i$, $i=1, \ldots, l$, defined by $\langle \omega_i, \al\sp\vee_r \rangle=\delta_{ir}$ for each $i,  r = 1, \ldots , l$, (cf. \cite{H}). 

We have a triangular decomposition $\mathfrak{g} =\mathfrak{n}_{-}\oplus \mathfrak{h}\oplus \mathfrak{n}_{+}$, where 
$\mathfrak{n}_{\pm }=\bigoplus_{\alpha \in R_{\pm}}\mathfrak{n}_{\alpha}$, and $ \mathfrak{n}_{\alpha}$ is the root subspace for $\alpha\in R$. For every positive root $\al$, we fix $x_{\pm\alpha}\in \mathfrak{n}_{\pm\alpha}$ and $\al\sp\vee\in\h$ such that 
$$
\left[x_{\alpha}, x_{-\alpha}\right]= \al\sp\vee,\quad \left[\al\sp\vee, x_{\alpha}\right]=2x_{\alpha}\fand \left[\al\sp\vee, x_{-\alpha}\right]=-2x_{\alpha}.
$$ That is, $\sll_2(\alpha)\coloneqq\CC x_{\alpha}\oplus \CC \al\sp\vee \oplus \CC x_{-\alpha}$ is isomorphic to $\sll_2$.

Consider the untwisted affine Lie algebra 
$$ \ghat=\mathfrak{g}\otimes \mathbb{C}[t,t^{-1}]\oplus \mathbb{C}c,$$
with the canonical central element $c$, where
$$
\left[x(m),y(n)\right]= \left[x, y\right](m+n)+ \left\langle x, y \right\rangle m \delta_{m+n\,0}\, c, 
$$
for all $x,y \in \g$, $m,n \in \ZZ$ (cf. \cite{K}). Here, traditionally, we   denote $x\otimes t^m$ by $x(m)$. For an element $x \in \g$ denote the generating function of $x(m)$ by
\beq \non
x(z)=\sum_{m \in \ZZ}x(m)z^{-m-1}.
\eeq
If we adjoin the degree operator $d$, where 
$$\left[d,x(m)\right]=mx(m)\fand\left[d,c\right]=0,$$ we obtain the affine Kac-Moody Lie algebra $\gtilde=\ghat \oplus \mathbb{C}d$.

Define the following subalgebras of $\hhat=\h \otimes \CC \left[t,t^{-1}\right]\oplus \CC c$:
\beq\label{heisenberg7}
\hhatp=\h \otimes t\CC \left[t\right], \qquad  \hhatm=\h \otimes t^{-1}\CC \left[t^{-1}\right]
\Fand
\mathfrak{s}=\coprod_{\substack{n \in \ZZ\\ n \neq 0}}\h \otimes t^n \oplus \CC c.
\eeq
Then $\mathfrak{s}$ is a Heisenberg subalgebra of $\ghat$. Denote by $M(k)$ the irreducible $\mathfrak{s}$-module with $c$ acting as a scalar $k$ (cf. \cite{K}). As a vector space, $M(k)$ is isomorphic to the $U(\hhatm)$.

The form $\langle \cdot, \cdot \rangle$ on $\h$ extends to $\h\oplus \CC c \oplus \CC d$. Using this form we identify $\h\oplus \CC c \oplus \CC d$ with its dual $(\h\oplus \CC c \oplus \CC d)^{\ast}$. The simple roots of $\gtilde$ are $\al_0, \al_1, \ldots, \al_l$ and the simple coroots are $\al\sp\vee_0, \al\sp\vee_1, \ldots, \al\sp\vee_l$. For each $\al$ denote by $\widetilde{\mathfrak{sl}}_2(\alpha)$ the affine Lie subalgebra of $\gtilde$ of type $A_1^{(1)}$ with the canonical central element $c_{\alpha}=\frac{2c}{\langle \al, \al \rangle}$.

Let $\Lambda_0, \Lambda_1, \ldots, \Lambda_l$ be the fundamental weights of $\gtilde$.
Denote by $L(\Lambda)$  the standard $\gtilde$-module with the highest weight vector $v_{\Lambda}$ of  the rectangular highest weight $\Lambda$, i.e. a weight
of the form 
\beq\label{rect}
\Lambda=k_0\Lambda_0+k_j\Lambda_j\quad\text{for}\quad k_0,k_j\in\ZZ_{\geqslant 0}\quad\text{such that}\quad k=k_0+k_j>0, 
\eeq
where 
\beq\label{jotovi}
\begin{cases}
j=1,\ldots , l&\text{ for }\gtilde\text{ of type }A_l^{(1)} \text{ or } C_l^{(1)},\\
j=1,l&\text{ for }\gtilde\text{ of type }B_l^{(1)},\\
j=1, l-1, l&\text{ for }\gtilde\text{ of type }D_l^{(1)},\\
j=1, 6 &\text{ for }\gtilde\text{ of type }E_6^{(1)},\\
j=1 &\text{ for }\gtilde\text{ of type }E_7^{(1)},\\
j=4&\text{ for }\gtilde\text{ of type }F_4^{(1)},\\
j=2&\text{ for }\gtilde\text{ of type }G_2^{(1)}.
\end{cases}
\eeq
Note that the standard module $L(\Lambda)$  of level $k$
can be regarded as an integrable $\widetilde{\mathfrak{sl}}_2(\alpha)$-module of level 
$$
k_{\al}=\frac{2k}{\langle \al, \al \rangle}.
$$

Consider the following subalgebras of $\gtilde$ 
\beq \non
\ntilde=\mathfrak{n}_{+} \otimes \CC[t,t^{-1}] \fand \ntilde_{\alpha}=\mathfrak{n}_{\alpha} \otimes \CC[t,t^{-1}].
\eeq 
We define the principal subspace  $\W$ of $L(\Lambda)$ as
\beq \non
\W=U\left(\ntilde\right)  v_{\Lambda}, 
\eeq
(cf. \cite{FS}). In \cite{Bu1, Bu2, Bu3, Bu4, Bu5, BK, G1} the bases of the principal subspaces $\W$ for    the affine Lie algebras $\gtilde$ of different types  were described in terms of quasi-particles, which we introduce in the following subsection.

\subsection{Quasi-particles}\label{ss:12}
Recall that $\LLo$ is a vertex operator algebra (cf. \cite{FLM,LL}) with the vacuum vector $\vmaxo$, generated by $x(-1)\vmaxo$ for $x \in \g$ such that
$$
Y(x(-1)\vmaxo, z)=x(z).
$$
The level $k$ standard $\gtilde$-modules are modules for this vertex operator algebra. 

We will consider the vertex operators 
\beq\label{vopquasi}
x_{r\alpha_i}(z)=\sum_{m\in\ZZ} x_{r\alpha_i}(m) z^{-m-r}=\underbrace{x_{\alpha_i}(z)\cdots x_{\alpha_i}(z)}_{r\text{ times}}=x_{\alpha_i}(z)^r
\eeq
associated with the vector $x_{\alpha_i}(-1)^r \vmaxo \in \LLo$. Now, as in \cite{G1}, for  a fixed positive integer $r$  and a fixed integer $m$ define the  {\em quasi-particle of color $i$, charge $r$ and  energy $-m$} as the coefficient  $x_{r\alpha_i}(m)$ of \eqref{vopquasi}.

\subsection{Quasi-particle   bases of the principal subspaces of standard modules}\label{ss:13}
Denote by $M_{QP}$ the set of all quasi-particle monomials of the form 
\begin{align} 
b=& \,b(\alpha_{l})\ldots b(\alpha_{1})\non \\
\label{monom}
=&\,x_{n_{r_{l}^{(1)},l}\alpha_{l}}(m_{r_{l}^{(1)},l}) \ldots  x_{n_{1,l}\alpha_{l}}(m_{1,l})\ldots 
x_{n_{r_{1}^{(1)},1}\alpha_{1}}(m_{r_{1}^{(1)},1}) \ldots  x_{n_{1,1}\alpha_{1}}(m_{1,1}),
\end{align}
where $1 \leqslant n_{r_{i}^{(1)},i}\leqslant \ldots \leqslant  n_{1,i}$  for all $i=1,\ldots ,l$ and $b(\alpha_{i})$ denotes the submonomial of $b$ consisting of all color $i$ quasi-particles. We allow   the indices $r_i^{(1)}$, $i=1,\ldots ,l$, to be zero.
If $r_i^{(1)}=0$ for some $i=1,\ldots ,l$ this means that the monomial $b$ does not contain any quasi-particles of color $i$ and we have $b(\alpha_i)=1$.
We now introduce some terminology (cf. \cite{Bu1, Bu2, Bu3, G1}).
The {\em charge-type} of the monomial $b$ is defined as the $l$-tuple
$$
\mathcal{R}'=(\mathcal{R}'_l, \ldots, \mathcal{R}'_1),
\qquad\text{where}\qquad
\mathcal{R}'_i=\left(n_{r_{i}^{(1)},i}, \ldots ,n_{1,i}\right)
\quad\text{for}\quad i=1,\ldots ,l.
$$
The {\em dual-charge-type} of $b$  is defined as the $l$-tuple
\beq\label{dctype496}
\mathcal{R}= (\mathcal{R}_l, \ldots, \mathcal{R}_1)
\qquad\text{where}\qquad
\mathcal{R}_i =\left(r^{(1)}_{i},\ldots , r^{(s_{i})}_{i}\right)
\quad\text{for}\quad i=1,\ldots ,l,
\eeq
where $r_{i}^{(n)}$ denotes the number of quasi-particles of color $i$ and of charge greater than or equal to $n$ in the monomial $b$ and $s_i=n_{1,i}$.
The charge-type and the dual-charge-type of  monomial \eqref{monom} can be represented by the $l$-tuple of diagrams, so that the $i$-th diagram corresponds to $b(\alpha_i)$. In the $i$-th diagram, the number of boxes in the $n$-th row equals   $r_i^{(n)}$ and the number of boxes in the $p$-th column equals $n_{p,i}$, where the
rows are counted from the bottom and the columns are counted from the right.
Clearly, the charge-type and the dual-charge-type of  monomial \eqref{monom} can be easily recovered from such diagrams; see Figure \ref{pic2}.

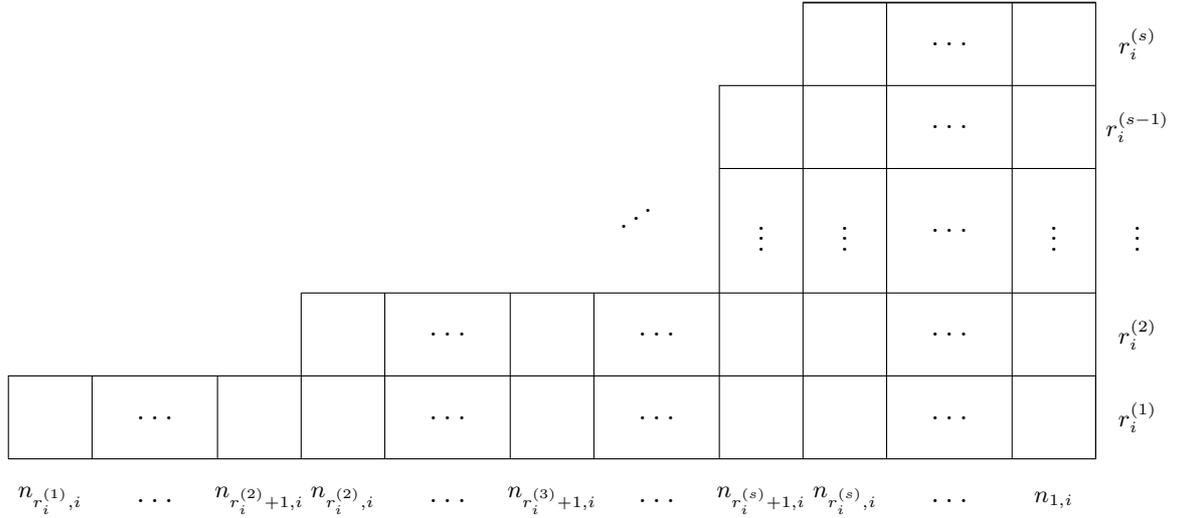
\begin{figure}[H]
\begin{tikzpicture}[scale=1.1]
\tikzset{node distance=1.8em, ch/.style={circle,draw,on chain,inner sep=2pt},chj/.style={ch,join},every path/.style={shorten >=0pt,shorten <=0pt},line width=1pt,baseline=-1ex}
\tikzstyle{every node}=[font=\scriptsize]
\draw  (0,0) -- (13,0) -- (13,1) -- (0,1) -- (0,0);

\draw  (1,0) -- (1,1);
\draw  (2.5,0) -- (2.5,1);
\draw  (3.5,0) -- (3.5,2);

\draw  (3.5,2) -- (13,2);

\draw  (4.5,0) -- (4.5,2);
\draw  (6,0) -- (6,2);
\draw  (7,0) -- (7,2);

\draw  (8.5,0) -- (8.5,4.5) -- (13,4.5);
\draw  (8.5,3.5) -- (13,3.5);

\draw  (9.5,0) -- (9.5,5.5) -- (13,5.5) -- (13,0);
\draw  (12,0) -- (12,5.5); 
\draw  (10.5,0) -- (10.5,5.5); 
\draw  (9.5,5.5) -- (13,5.5);

\node at (0.5,-0.5) {$ n_{r_i^{(1)} ,i} $};
\node at (1.75,-0.5) {{\normalsize $\ldots$}};
\node at (3,-0.5) {$n_{r_i^{(2)}+1 ,i}$};
\node at (4,-0.5) {$n_{r_i^{(2)} ,i}$};
\node at (5.25,-0.5) {{\normalsize $\ldots$}};
\node at (6.5,-0.5) {$n_{r_i^{(3)}+1 ,i}$};
\node at (7.75,-0.5) {{\normalsize $\ldots$}};
\node at (9,-0.5) {$n_{r_i^{(s)}+1 ,i}$};
\node at (10,-0.5) {$n_{r_i^{(s)} ,i}$};
\node at (11.25,-0.5) {{\normalsize $\ldots$}};
\node at (12.5,-0.5) {$n_{1 ,i}$};

\node at (1.75,0.5) { {\normalsize $\ldots$}};
\node at (5.25,0.5) {{\normalsize $\ldots$}};
\node at (7.75,0.5) {{\normalsize $\ldots$}};
\node at (11.25,0.5) {{\normalsize $\ldots$}};

\node at (5.25,1.5) {{\normalsize $\ldots$}};
\node at (7.75,1.5) {{\normalsize $\ldots$}};
\node at (11.25,1.5) {{\normalsize $\ldots$}};

\node at (11.25,4) {{\normalsize $\ldots$}};
\node at (11.25,5) {{\normalsize $\ldots$}};

\node at (13.5,0.5) {$r_i^{(1)}$};
\node at (13.5,1.5) {$r_i^{(2)}$};
\node at (13.5,2.75) {{\normalsize $\vdots$}};
\node at (13.5,4) {$r_i^{(s-1)}$};
\node at (13.5,5) {$r_i^{(s)}$};

\node at (12.5,2.75) {{\normalsize $\vdots$}};
\node at (11.25,2.75) {{\normalsize $\ldots$}};
\node at (10,2.75) {{\normalsize $\vdots$}};
\node at (9,2.75) {{\normalsize $\vdots$}};
\node at (7.5,3) { \normalsize\reflectbox{$ \ddots $} };

\end{tikzpicture}\caption{Diagram for  $b(\alpha_i)=x_{n_{r_{i}^{(1)},i}\alpha_{i}}(m_{r_{i}^{(1)},i}) \ldots  x_{n_{1,i}\alpha_{i}}(m_{1,i}) $ with $s=s_i=n_{1,i}$}
\label{pic2}
\end{figure}

The {\em energy-type} of $b$ (see \cite{BK}) is given by
$$
\mathcal{E}=\left(m_{r_{l}^{(1)},l}, \ldots ,m_{1,l};\ldots ;m_{r_{1}^{(1)},1}, \ldots ,m_{1,1}\right).$$ 
Denote by $\en b$ the {\em total energy} of monomial $b$,
$$\en b=-m_{r_{l}^{(1)},l}- \cdots - m_{1,l}-\cdots - m_{r_{1}^{(1)},1}-  \cdots -m_{1,1}.$$
For every color $i$, $1 \leqslant i \leqslant l$, we introduce the {\em  total charge of color} $i$
\beq\label{asdfgh}
\chgi b=\sum_{p=1}^{r_{i}^{(1)}}n_{p,i}=\sum^{s_{i}}_{t=1}r^{(t)}_{i},
\eeq
and also the {\em total charge} of the monomial $b$
$$
\chg b=\sum_{i=1}^l\chgi b.
$$
Following \cite{G1} (see also \cite{Bu1, Bu2, Bu3,BK}) we say that the monomial $b$ is of {\em color-type} 
$$ 
\mathcal{C}=\left(\chgl b,\ldots, \chgI b\right).
$$
Let $b$ and $\overline{b}$ be any two monomials of the same total charge and
let  $\mathcal{C}=(c_l,\ldots, c_1)$
and
$\overline{\mathcal{C}}=(\overline{c}_l,\ldots, \overline{c}_1)$ be the color-types of $b$ and $b'$ respectively.
 We write $\mathcal{C} < \overline{\mathcal{C}}$ if there exists $u=1,\ldots ,l$ such that
$$
c_1=\overline{c}_1,\, c_2=\overline{c}_2,\ldots ,c_{u-1}=\overline{c}_{u-1} \text{ and }
c_{u}<\overline{c}_{u}.
$$

On the set of monomials in $M_{QP}$ of the fixed total energy and $\h$-weight we introduce the linear order ``$<$'' as follows. We write 
  $$
	b < \overline{b}\qquad
	\text{if}
	\qquad
	\mathcal{R}'<\overline{\mathcal{R}'}
	\quad\text{or}\quad
\mathcal{R}'=\overline{\mathcal{R}'}\text{ and }\mathcal{E}<\overline{\mathcal{E}},
$$
where  $\overline{\mathcal{R}'}$ and  $\overline{\mathcal{E}}$ denote the charge-type and the energy-type of the monomial $\overline{b}$ 
and we write $\mathcal{R}'<\overline{\mathcal{R}'}$ with $\overline{\mathcal{R}'}=\left(\overline{n}_{\overline{r}_{l}^{(1)},l}, \ldots ,\overline{n}_{1,1}\right)$, if there exist $i=1,\ldots,l$ and $u =1,\ldots ,  \overline{r}_{i}^{(1)}$ such that 
\begin{gather*}
 r_{j}^{(1)}=\overline{r}_{j}^{(1)} \fand
n_{1,j}=\overline{n}_{1,j},\, n_{2,j}=\overline{n}_{2,j},\,\ldots , \,n_{\overline{r}_{j}^{(1)},j}=\overline{n}_{\overline{r}_{j}^{(1)},j}\quad\text{for}\quad  j=1,\ldots ,i-1,\\
u \leqslant r_{i}^{(1)}\quad\text{and}\quad
n_{1,i}=\overline{n}_{1,i},\, n_{2,i}=\overline{n}_{2,i},\,\ldots , \,n_{u-1,i}=\overline{n}_{u-1,i},\, 
n_{u,i}<\overline{n}_{u,i} \qquad \text{ or }\\ 
 u=r_{i}^{(1)}<\overline{r}_{i}^{(1)} \quad\text{and}\quad n_{1,i}=\overline{n}_{1,i},\, n_{2,i}=\overline{n}_{2,i},\,\ldots , \,n_{u,i}=\overline{n}_{u,i}.
\end{gather*}
In a similar way we define the order ``$<$'' for energy-types.

Set $\mu_i =k_{\alpha_i}/k_{\alpha_{i'}}$ for $i=2,\ldots ,l$, where  
$$
i'=
\begin{cases}
l-2,&\text{if } i=l\text{ and }\g=D_l,\\
3,&\text{if } i=l\text{ and }\g=E_6,E_7,\\
5,&\text{if } i=l\text{ and }\g=E_8,\\
i-1,&\text{otherwise.} \\
\end{cases}
$$
For any $j=1,\ldots ,l$ as in \eqref{jotovi} let
$$
j_t=\begin{cases}
j,&\text{if } \nu_j k_0 +(\nu_j -1)k_j +1\leqslant t\leqslant k_{\alpha_j},\\
0,&\text{otherwise},
\end{cases}
\qquad\text{where}\qquad\nu_j =k_{\alpha_j}/k.\qquad
$$
We have (see \cite{Bu1,Bu2,Bu3,Bu4,Bu5,BK,G1})
\begin{thm}\label{thm:1} 
For any integer $k>0$ the set 
$$\mathfrak{B}_{\W}=\left\{b\ts v_{\Lambda} \,\big|\big.\, b \in B_{\W}\right\}$$ is a basis of the principal subspace $\W$, where 
\begin{align*} 
&B_{W_{L(\Lambda)}}= \bigcup_{\substack{ r_1^{(1)},\ldots ,r_l^{(1)}\geqslant 0\\
1\leqslant n_{r_{1}^{(1)},1}\leqslant \ldots \leqslant n_{1,1}\leqslant 
k_{\alpha_1} \vspace{-8pt}\\
 \vdots \\
  1\leqslant n_{r_{l}^{(1)},l}\leqslant \ldots \leqslant n_{1,l}\leqslant k_{\alpha_l}}}\left(\text{or, equivalently,} \ \ \ 
\bigcup_{\substack{r_{1}^{(1)}\geqslant \ldots\geqslant r_{1}^{(k_{\alpha_1})}\geqslant 0 \vspace{-3pt}\\
\vdots \vspace{-8pt} \\
\\r_{l}^{(1)}\geqslant \ldots\geqslant r_{l}^{(k_{\alpha_l})}\geqslant 
0}}\right)\\
& \Bigg\{\Bigg.\,\,
\begin{array}{l}
b=   b(\alpha_{l})\ldots b(\alpha_{1})\\
\hspace{8.5pt} =  x_{n_{r_{l}^{(1)},l}\alpha_{l}}(m_{r_{l}^{(1)},l})\ldots x_{n_{1,l}\alpha_{l}}(m_{1,l})\ldots x_{n_{r_{1}^{(1)},1}\alpha_{1}}(m_{r_{1}^{(1)},1})\ldots  x_{n_{1,1}\alpha_{1}}(m_{1,1})
\end{array}\Bigg.\Bigg|\\
&\Bigg|\Bigg.\,\,
\begin{array}{l}
m_{p,i}\leqslant  -n_{p,i}+ \textstyle\sum_{q=1}^{r_{i'}^{(1)}}\min \textstyle\left\{\mu_i n_{q,i'},n_{p,i}\right\}-2 (p-1)n_{p,i}  - \textstyle\sum_{t=1}^{n_{p,i}}\delta_{i  j_t}   \\
\qquad\qquad\qquad\qquad\qquad\qquad\qquad \text{ for all }1\leqslant  p\leqslant r_{i}^{(1)},\text{ } 1<i \leqslant l;\\ 
m_{p+1,i} \leqslant   m_{p,i}-2n_{p,i}    \text{ if }   n_{p+1,i}=n_{p,i}, \, 1\leqslant  p\leqslant r_{i}^{(1)}-1, \, 1 \leqslant i \leqslant l\\
\end{array}\,\,\Bigg.\Bigg\} .
\end{align*}
\end{thm}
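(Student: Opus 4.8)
The plan is to prove the two halves of the statement separately: that $\BVfi$ spans $\W$, and that it is linearly independent over $\CC$.

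\textbf{Spanning.} Since $\W=U(\ntilde)\vmax$ and $\ntilde=\nplus\ot\CC[t,t^{-1}]$ is generated as a Lie algebra by the simple root vectors $x_{\al_i}$, the space $\W$ is spanned by monomials in the charge-one quasi-particles $x_{\al_i}(m)$ applied to $\vmax$. First I would collapse maximal strings of equal-color charge-one operators into higher-charge quasi-particles via the identity $x_{\al_i}(z)^r=x_{r\al_i}(z)$ from \eqref{vopquasi}, and use the commutativity of the generating series $x_{\al_i}(z)$ to place the colors in the order $l,l-1,\ldots,1$ and, within each color, the charges in the non-decreasing order of \eqref{monom}. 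What remains is a straightening argument: any ordered monomial violating one of the stated inequalities is rewritten as a $\CC$-linear combination of monomials strictly smaller in the order ``$<$''. Three families of relations drive this reduction. The integrability relation $x_{\al_i}(z)^{k_{\al_i}+1}=0$, valid on $\LL$ viewed as a level-$k_{\al_i}$ module for $\widetilde{\mathfrak{sl}}_2(\al_i)$, forces the charge bound $n_{1,i}\leqslant k_{\al_i}$. The mutual locality of equal-color quasi-particles yields the intra-color condition $m_{p+1,i}\leqslant m_{p,i}-2n_{p,i}$ when $n_{p+1,i}=n_{p,i}$. Finally, the commutators coming from the bracket $[x_{\al_i},x_{\al_{i'}}]$ on $\g$, together with the explicit action on $\vmax$ (which supplies the term $\sum_t\delta_{i j_t}$, recording the dependence of $\Lambda$ on $k_0,k_j$), produce the inter-color bound built from $\sum_q\min\{\mu_i n_{q,i'},n_{p,i}\}$. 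Induction with respect to ``$<$'' then gives spanning, as in \cite{G1,Bu1,Bu2,Bu3,Bu5,BK}.

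\textbf{Linear independence.} This is the substantive half, and I would follow Georgiev's strategy from \cite{G1}. Because the $\h$-weight grading and the charge-type grading are preserved, it suffices to prove independence among monomials of a fixed charge-type $\Rcp$ and fixed weight, where the order ``$<$'' collapses to the order on energy-types. Given a putative relation $\sum_b c_b\, b\,\vmax=0$, let $b_0$ be its $<$-maximal monomial; the goal is to isolate $c_{b_0}$ by applying a sequence of operators that kills every $b<b_0$ while sending $b_0\vmax$ to a nonzero vector. I would first separate the colors by embedding $\LL$ into a tensor product of fundamental-weight standard modules (for $\Lambda$ as in \eqref{rect}, into a product of copies of $L(\Lambda_0)$ and $L(\Lambda_j)$) and composing with the projection onto the corresponding principal subspace; this converts the inter-color interaction into exactly the combinatorics recorded in the difference conditions and reduces each single color to the $\widetilde{\mathfrak{sl}}_2(\al_i)$-case of \cite{FS}. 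Peeling off the highest-charge quasi-particle of the top color by a coefficient extraction from the relevant vertex operator, and using the translation operators $e_{\omega_i}$ to pass to a standard module of strictly smaller highest weight, one sets up an induction on the total charge $\chg b$ that forces $c_{b_0}=0$, contradicting maximality.

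\textbf{Main obstacle.} Spanning is essentially mechanical once the three relation families are recorded, so the difficulty is concentrated in the color-separation step of the independence proof for the non-simply-laced types. There the interaction is governed by the nonuniform level ratios $\mu_i=k_{\al_i}/k_{\al_{i'}}$ and by the truncation $\min\{\mu_i n_{q,i'},n_{p,i}\}$, and one must verify that the triangular (leading-term) structure of $\BVfi$ is compatible with these ratios---equivalently, that the admissible monomials biject with a basis of each weight space. It is precisely this matching, together with the inhomogeneous weights $\Lambda$ in \eqref{rect} and the exceptional indices $i'$ for $D_l,E_6,E_7,E_8$, that requires the most careful adaptation of Georgiev's argument.
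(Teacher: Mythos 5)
The paper does not actually prove this theorem---it is imported from \cite{G1,Bu1,Bu2,Bu3,Bu5,BK} (with the mechanism only sketched later, in the proofs of Lemma \ref{lemsec22} and Theorem \ref{main})---and your outline follows essentially the same route as those references: spanning by straightening against the order ``$<$'' using the quasi-particle relations, integrability $x_{(k_{\alpha_i}+1)\alpha_i}(z)=0$ and the initial conditions on $v_\Lambda$, and linear independence by Georgiev-type projections onto tensor products of level-one standard modules combined with simple-current/Weyl-translation operators, coefficient extraction, and induction on charge, with the modified projections needed for short roots in the non-simply-laced case. The one misstatement is the preliminary ``collapse'' of a string of charge-one operators into a higher-charge quasi-particle via $x_{\alpha_i}(z)^r=x_{r\alpha_i}(z)$: the diagonal specialization does not determine the individual coefficients of $x_{\alpha_i}(z_1)\cdots x_{\alpha_i}(z_r)$, so this is not a valid rewriting step---higher-charge quasi-particles enter only through the straightening relations themselves (cf.\ \cite[Lemmas 1 and 2]{Bu4}), which your subsequent description correctly captures, so the plan survives.
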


Denote by $M'_{QP} \subset M_{QP}$ the set of all quasi-particle monomials as in \eqref{monom} with no quasi-particles $x_{k_{\alpha_i}\alpha_i}(r)$ for $i=1,\ldots ,l$.
Finally, we introduce the   set 
\begin{gather*}\mathfrak{B}'_{W_{L(\Lambda)}}=\left\{b\ts v_{\Lambda} \,\big|\big.\,b \in B'_{W_{L(\Lambda)}}\right\}, \qquad\text{where}\\
B'_{W_{L(\Lambda)}}= B_{W_{L(\Lambda)}}\cap M'_{QP}  =\left\{b\in B_{\W}\,\big|\big.\, n_{1,i}< k_{\alpha_i}\text{ for }i=1,\ldots ,l \right\} . 
\end{gather*}

\subsection{Character formula for principal subspaces}\label{ss:14}
We write
$$(a;q)_r=\prod_{i=1}^r (1- aq^{i-1}) \ \ \text{for} \ \ r\geqslant 0
\Fand
(a; q)_{\infty} =\prod_{i\geqslant 1} (1- aq^{i-1}).
$$
Set  $n_i=\sum_{t=1}^{k_{\alpha_i}}r_i^{(t)}$  for $i=1,\ldots ,l$. 
Notice that $n_i =\chgi b$; recall \eqref{asdfgh}.
For any dual-charge-type $\mathcal{R} $, as given by \eqref{dctype496},  which satisfies 
\beq\label{dctype6}
r_{i}^{(1)}\geqslant \ldots\geqslant r_{i}^{(k_{\alpha_i})}\geqslant 
0\qquad\text{for all}\qquad i=1,\ldots ,l, 
\eeq
define the expressions
$$
F_{\mathcal{R}_i}(q)=\frac{q^{\sum_{t=1}^{k_{\alpha_i}}r_i^{(t)^2}+   \sum_{t=1}^{k_{\alpha_i}} r_i^{(t)} \delta_{i j_t}}}{(q;q)_{r^{(1)}_{i}-r^{(2)}_{i}}\cdots (q;q)_{r^{(k_{\alpha_i})}_{i}}}
 \Fand
I^{ii'}_{{\mathcal{R}_i},\mathcal{R}_{i'}}(q)=q^{-\sum_{t=1}^{k}
\sum_{p=0}^{\mu_i-1}r_{i'}^{(t)}
r_{i}^{\left(\mu_i t -p\right)}},
$$
where for $i=1$ we set $I^{ii'}_{{\mathcal{R}_i},\mathcal{R}_{i'}}(q)=1$.

Let $\delta=\sum_{i=0}^l a_i\alpha_i$ be the imaginary root as in \cite[Chapter 5]{K}. Then the integers $a_i$ are the labels in the corresponding Dynkin diagram; see \cite[Table Aff]{K}.
Define the character $\ch W_{L(\Lambda)}$ of the principal subspace $W_{L(\Lambda)}$  by
$$
\ch W_{L(\Lambda)}=\sum_{n,n_1,\ldots ,n_l\geqslant 0}
\dim \left(W_{L(\Lambda)}\right)_{(n;n_1,\ldots ,n_l)}
q^n y_1^{n_1}\ldots y_l^{n_l},
$$
where $q,y_1,\ldots ,y_l$ are formal variables and 
$\left(W_{L(\Lambda)}\right)_{(n;n_1,\ldots ,n_l)}$
is the weight subspace of $W_{L(\Lambda)}$ of the weight
$\Lambda-n\delta+n_1\alpha_1+\ldots +n_l\alpha_l$ with respect to $\wht{\h}\oplus\CC d$.
 Theorem \ref{thm:1} implies the following character formulas:
\begin{thm}\label{thm_karakterL(kLambda0)}
  For any integer $k\geqslant 1 $  we have
\beq\label{char496}
 \ch W_{L(\Lambda)}= \sum_{\mathcal{R}}\prod_{i=1}^l F_{\mathcal{R}_i}(q)\ts I^{ii'}_{{\mathcal{R}_i},\mathcal{R}_{i'}}(q)\prod_{p=1}^{l}y^{n_p}_{p},
\eeq
where the sum  in \eqref{char496} goes over all dual-charge-types $\Rc$ satisfying \eqref{dctype6}.
 
\end{thm}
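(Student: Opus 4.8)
The plan is to read the character off the quasi-particle basis of Theorem~\ref{thm:1} and then to evaluate the resulting $q$-series by summing first over energies (for a fixed charge configuration) and afterwards over charges. Since $\BVfi=\{b\,\vmax\mid b\in B_{\W}\}$ is a basis of $\W$, it suffices to track the weight of each vector $b\,\vmax$. Every quasi-particle $x_{n\al_i}(m)$ raises the $\h$-weight by $n\al_i$ and shifts the $d$-degree by $m$; hence $b\,\vmax$ has $\h$-weight $\Lambda|_{\h}+\sum_{i=1}^l(\chgi b)\,\al_i$ and $d$-degree $\Lambda(d)+\sum_{p,i}m_{p,i}=\Lambda(d)-\en b$. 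As $\delta$ vanishes on $\h$, the coefficient of $\al_i$ in the weight equals $n_i=\chgi b=\sum_{t=1}^{k_{\al_i}}r_i^{(t)}$ and the $\delta$-depth equals $\en b$. Therefore
\[
\ch\W=\sum_{b\in B_{\W}}q^{\,\en b}\prod_{i=1}^l y_i^{\,\chgi b}.
\]

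Next I would organize this sum by the dual-charge-type $\Rc$ of $b$. Fixing $\Rc$ fixes the charge-type (conjugate partitions, colourwise), hence all the charges $n_{p,i}$ and the monomial $\prod_{i}y_i^{n_i}$ with $n_i=\sum_t r_i^{(t)}$, which therefore factors out. What remains is the purely energetic generating function
\[
G_{\Rc}(q)=\sum_{\Ec}q^{\,\en b},
\]
summed over all energy-types $\Ec$ admissible for $\Rc$, i.e. over all integers $m_{p,i}$ obeying the two families of inequalities of Theorem~\ref{thm:1}; the target is the factorisation $G_{\Rc}(q)=\prod_{i=1}^l F_{\Rc_i}(q)\,I^{ii'}_{\Rc_i,\Rc_{i'}}(q)$, after which summing over all $\Rc$ satisfying \eqref{dctype6} yields \eqref{char496}.

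To compute $G_{\Rc}$ I would split off the minimal-energy monomial. Writing the first bound of Theorem~\ref{thm:1} as $m_{p,i}\leqslant C_{p,i}$ (with the convention that for $i=1$ the cross-colour sum is empty and $I^{11'}=1$), one checks that inside a block of equal charges $n_{p,i}=n_{p+1,i}$ one has $C_{p+1,i}=C_{p,i}-2n_{p,i}$, so the first bound at equality is compatible with the second bound at equality; thus $(m_{p,i})$ ranges over a translate of a partition cone. Setting $m_{p,i}=C_{p,i}-e_{p,i}$, the second bound reads $e_{p+1,i}\geqslant e_{p,i}$ inside each block and is vacuous across a drop in charge, so for each colour $i$ the defects $e_{p,i}$ form independent non-decreasing non-negative sequences, one per charge value $t$ of length $r_i^{(t)}-r_i^{(t+1)}$. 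Each such sequence contributes $(q;q)_{r_i^{(t)}-r_i^{(t+1)}}^{-1}$, giving exactly the denominator of $F_{\Rc_i}$. The minimal total energy is
\[
E_{\min}(\Rc)=\sum_{i=1}^l\Big(\sum_{p}(2p-1)n_{p,i}+\sum_{p}\sum_{t=1}^{n_{p,i}}\delta_{i\,j_t}-\sum_{p,q}\min\{\mu_i n_{q,i'},n_{p,i}\}\Big),
\]
and three elementary identities turn its summands into the remaining exponents: the conjugate-partition identity $\sum_p(2p-1)n_{p,i}=\sum_t\big(r_i^{(t)}\big)^2$, the reindexing $\sum_p\sum_{t\leqslant n_{p,i}}\delta_{i\,j_t}=\sum_t r_i^{(t)}\delta_{i\,j_t}$, and the min-sum identity
\[
\sum_{p,q}\min\{\mu_i n_{q,i'},n_{p,i}\}=\sum_{s\geqslant 1}r_{i'}^{(\lceil s/\mu_i\rceil)}r_i^{(s)}=\sum_{t\geqslant 1}\sum_{p=0}^{\mu_i-1}r_{i'}^{(t)}r_i^{(\mu_i t-p)},
\]
whose first equality follows by writing $\min\{x,y\}$ as the number of integers $s\geqslant1$ with $s\leqslant x$ and $s\leqslant y$ and then counting colour-$i'$ charges exceeding $s/\mu_i$. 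These identify the first two terms with the numerator of $F_{\Rc_i}$ and the last with the exponent of $I^{ii'}_{\Rc_i,\Rc_{i'}}$ (only finitely many terms are nonzero, so the upper summation limit is immaterial). Hence $G_{\Rc}(q)=q^{E_{\min}(\Rc)}\prod_i\prod_t(q;q)_{r_i^{(t)}-r_i^{(t+1)}}^{-1}=\prod_i F_{\Rc_i}(q)\,I^{ii'}_{\Rc_i,\Rc_{i'}}(q)$, which proves \eqref{char496}.

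The only genuinely delicate points are the verification that the first and second bounds can be made simultaneously tight at the minimal-energy monomial, so that the admissible region for $(m_{p,i})$ is an honest translated partition cone, and the precise non-simply-laced shape of the cross-colour term, i.e. the emergence of $\mu_i$ and of the ceiling $\lceil s/\mu_i\rceil$ in the min-sum identity; the remaining manipulations are the conjugate-partition bookkeeping already present in the simply-laced computation of \cite{G1}.
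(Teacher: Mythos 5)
Your proposal is correct and is exactly the argument the paper invokes: the paper derives \eqref{char496} from Theorem \ref{thm:1} by the standard bookkeeping (sum over the basis, factor by dual-charge-type, parametrize admissible energies by non-decreasing defect sequences within blocks of equal charge giving the $(q;q)_{p_i^{(t)}}^{-1}$ factors, and convert the minimal energy via the conjugate-partition and min-sum identities), citing the analogous computations in \cite{G1,Bu1,Bu2,Bu3,Bu5,BK} rather than writing them out. Your verification that the two families of inequalities are simultaneously tight at the minimal-energy configuration, and your handling of the $\lceil s/\mu_i\rceil$ reindexing in the cross-colour term, are precisely the points that make the implicit argument go through.
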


Recall that $\nu_i =\frac{k_{\alpha_i}}{k}=\frac{2}{\left<\alpha_i,\alpha_i\right>}$ for $i=1,\ldots ,l$. Define
\beq\label{Gij}
G_{ir}^{mn} = \min\left\{\nu_r m,\nu_i n\right\}\cdot \left<\alpha_i,\alpha_r\right>\quad
\text{for all}\quad i,r=1,\ldots ,l\fand m,n\geqslant 1.
\eeq
For dual-charge-type \eqref{dctype496} with $s_i = k_{\alpha_i}$ define the elements  $\Pc_i =(p_i^{(1)},\ldots ,p_{i}^{(k_{\alpha_i})})$ by
$$
\Pc_i=(r_{i}^{(1)}-r_{i}^{(2)},r_{i}^{(2)}-r_{i}^{(3)},\ldots,r_{i}^{(k_{\alpha_i}-1)}-r_{i}^{(k_{\alpha_i})},r_{i}^{(k_{\alpha_i})}),\quad\text{where }i=1,\ldots ,l,
$$
so that   $p_{i}^{(t)}$ denotes the number of quasi-particles of color $i$ and  charge $t$ in   monomial \eqref{monom}. 
Note that the integers $n_i=\sum_{t=1}^{k_{\alpha_i}}r_i^{(t)}$  can be expressed in terms of   $p_{i}^{(t)}$ as $n_i =\sum_{t\geqslant 1} t p_{i}^{(t)}$.
Organize all   elements $\Pc_i$ into the $l$-tuple $\Pc=(\Pc_l,\ldots ,\Pc_1)$. 
Using the notation
\begin{align*}
& D_{\Pc}(q)= \frac{1}{\prod_{i=1}^l \prod_{r=1}^{k_{\alpha_i}} (q;q)_{p_{i}^{(r)}}}
,\qquad
G_{\Pc}(q) = q^{\frac{1}{2}\sum_{i,r=1}^l \sum_{m=1}^{k_{\alpha_i}} \sum_{n=1}^{k_{\alpha_r}}   G_{ir}^{mn} p_i^{(m)} p_r^{(n)}  },\\
&  B_{\Pc}(q)= q^{\sum_{t=\nu_jk_0-(\nu_j-1)k_j+1}^{k_{\alpha_j}}(t-\nu_jk_0+(\nu_j-1)k_j)p_j^{(t)}}
\end{align*}
one can express character formula \eqref{char496} as
$$
\ch W_{L(\Lambda)}= \sum_{\Pc}D_{\Pc}(q)\ts G_{\Pc}(q)\ts B_{\Pc}(q)\prod_{i=1}^{l}y^{n_i}_{i},
$$
where the sum goes over all finite sequences $\Pc=(\Pc_l,\ldots ,\Pc_1)$ of $k_{\alpha_1}+\ldots + k_{\alpha_l}$ nonnegative integers. 
In particular, observe that for $\Lambda=k_0\Lambda_0=k\Lambda_0$ we have $B_{\Pc}(q)=1$, so   the character formula takes the form
$$
\ch W_{L(k \Lambda_0)}= \sum_{\Pc}D_{\Pc}(q)\ts G_{\Pc}(q)\prod_{i=1}^{l}y^{n_i}_{i}.
$$

In the end, note that  
 \beq\label{charGij}
\ch\spn\mathfrak{B}'_{W_{L(\Lambda)}}
= \sum_{\Pc}D'_{\Pc}(q)\ts G'_{\Pc}(q)\ts B'_{\Pc} (q)\prod_{i=1}^{l}y^{n'_i}_{i},
\eeq
 where
 \begin{align}
&D'_{\Pc}(q)= \frac{1}{\prod_{i=1}^l \prod_{r=1}^{k_{\alpha_i}-1} (q;q)_{p_{i}^{(r)}}}
,\qquad
G'_{\Pc}(q) = q^{\frac{1}{2}\sum_{i,r=1}^l \sum_{m=1}^{k_{\alpha_i}-1} \sum_{n=1}^{k_{\alpha_r}-1}   G_{ir}^{mn} p_i^{(m)} p_r^{(n)}  },\label{charGij2}\\
&B'_{\Pc}(q)= q^{\sum_{t=\nu_jk_0-(\nu_j-1)k_j+1}^{k_{\alpha_j}-1}(t-\nu_jk_0+(\nu_j-1)k_j)p_j^{(t)}}.\label{charGij3}
\end{align}
The sum in \eqref{charGij} goes over all finite sequences $\Pc=(\Pc_l,\ldots ,\Pc_1)$ of $k_{\alpha_1}+\ldots + k_{\alpha_l}-l$ nonnegative integers, with $\Pc_i =(p_i^{(1)},\ldots ,p_{i}^{(k_{\alpha_i}-1)})$ and with $ n'_i =\sum_{t\geqslant 1} t p_{i}^{(t)}$.
\subsection{Weyl group translation operators}\label{ss:15}
For every root $\alpha \in R$  we define on the standard module $L(\Lambda)$ of level $k$ the  Weyl group translation  operator  $e_{\alpha\sp\vee}$ by
$$
e_{\alpha\sp\vee}=\exp  x_{-\alpha}(1)\exp  (- x_{\alpha}(-1))\exp  x_{-\alpha}(1) \exp x_{\alpha}(0)\exp   (-x_{-\alpha}(0))\exp x_{\alpha}(0),
$$
(cf. \cite{K}). 
The map $\alpha\sp\vee \mapsto e_{\alpha\sp\vee}$ extends to a projective representation of the root lattice $Q\sp\vee$ on  $L(\Lambda)$   such that
$
e_{\alpha\sp\vee}e_{-\alpha\sp\vee}=1.
$

We will   use the following relations on the standard $\widehat{\mathfrak{g}}$-module $L(\Lambda)$, which are consequence of the adjoint action of $e_{\alpha\sp\vee}$ on $\gtilde$ (cf. \cite{K,FK}, see also \cite{P}):
\begin{gather}
e_{\alpha\sp\vee}ce_{\alpha\sp\vee}^{-1}=c,\label{com1}\\
e_{\alpha\sp\vee}de_{\alpha\sp\vee}^{-1}=d+ \alpha\sp\vee-\frac{1}{2}\left\langle \alpha\sp\vee,\alpha\sp\vee\right\rangle c,\label{com2}\\
e_{\alpha\sp\vee}he_{\alpha\sp\vee}^{-1}=h-\left\langle \alpha\sp\vee,h\right\rangle c,\label{com3}\\
e_{\alpha\sp\vee}h(j)e_{\alpha\sp\vee}^{-1}=h(j) \quad \text{for} \quad   j\neq 0,
\label{com4}\\
e_{\alpha\sp\vee}x_{\beta}(j)e_{\alpha\sp\vee}^{-1}=x_{\beta}(j-\beta(\alpha\sp\vee)),
 \label{com5}
\end{gather}
where $h \in \mathfrak{h}$, $\beta \in R$ and $j \in \mathbb{Z}$.

\subsection{Operators \texorpdfstring{$E^\pm (h,z)$}{E(h,z)}}\label{ss:15b}

For $h \in \mathfrak{h}$ set 
\beq\non
E^{\pm}(h, z)=\exp \left( \sum_{n \geqslant 1}h(\pm n)\frac{z^{\mp n}}{\pm n}\right).
\eeq
On the highest weight level $k$ $\wht{\g}$-module    the formal power series 
$E^{-}(h, z)E^{+}(h, z)$
is well defined. We have
$$
[h'(i), E^{-}(h, z)E^{+}(h, z)]
=-\left<h',h\right>kz^i E^{-}(h, z)E^{+}(h, z)\qquad\text{for }h'\in\h,\text{ } i\in\ZZ .
$$
Note that
\beq\label{comheiqp}
[h'(i), x_\alpha(  z)]
=  \alpha(h') z^i x_\alpha(  z)\qquad\text{for }h'\in\h,\text{ } i\in\ZZ,\text{ }\alpha\in R .
\eeq
Hence on the highest weight level $k$ $\wht{\mathfrak{g}}$-module Lepowsky--Wilson's $\Zc$-operators
$$
\Zc_\al (z)= E^-(\al,z)^{1/{k }} x_\al (z) E^+(\al,z)^{1/{k }}
$$
commute with the action of  the Heisenberg subalgebra $\mathfrak{s}$ defined by \eqref{heisenberg7}.

We have (cf. \cite{LP2,LW2})
\begin{lem}
For any simple root $\alpha$ and $h\in\h$ we have
\beq\label{ex-relation}
E^+(h,z_1) x_\alpha (z_2)=\left(1-z_2 /z_1\right)^{-\alpha(h)}x_\alpha (z_2)E^+(h,z_1),
\eeq
\beq\label{ex-relation1}
x_\alpha (z_2)E^-(h,z_1) =\left(1-z_1 /z_2\right)^{-\alpha(h)}E^-(h,z_1)x_\alpha (z_2).
\eeq
\end{lem}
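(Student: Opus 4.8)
The plan is to prove both identities by conjugating the field $x_\alpha(z_2)$ by the exponential operators $E^\pm(h,z_1)$ and using that, on a highest weight module, the bracket of $x_\alpha(z_2)$ with the argument of the exponential is a scalar multiple of $x_\alpha(z_2)$ itself. Throughout, the binomial factors $(1-z_2/z_1)^{-\alpha(h)}$ and $(1-z_1/z_2)^{-\alpha(h)}$ are to be understood in the usual way, i.e.\ as formal power series expanded in nonnegative powers of $z_2/z_1$ and $z_1/z_2$ respectively. Since the modes $h(n)$, $n\geqslant 1$, act locally nilpotently on a highest weight module and, for each fixed power of $z_1$, only finitely many products of the modes $h(-n)$, $n\geqslant 1$, contribute, both $E^+(h,z_1)$ and $E^-(h,z_1)$ are well-defined operator-valued formal series, and the products appearing in \eqref{ex-relation}--\eqref{ex-relation1} make sense.

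For \eqref{ex-relation}, I would write $E^+(h,z_1)=\exp A$ with $A=\sum_{n\geqslant 1}h(n)z_1^{-n}/n$. Applying relation \eqref{comheiqp} term by term and invoking the series $\sum_{n\geqslant 1}w^n/n=-\log(1-w)$ gives
\beq\non
[A,x_\alpha(z_2)]=\alpha(h)\Big(\sum_{n\geqslant 1}\tfrac{1}{n}(z_2/z_1)^n\Big)x_\alpha(z_2)=-\alpha(h)\log(1-z_2/z_1)\,x_\alpha(z_2).
\eeq
The right-hand scalar commutes with $A$, so $\mathrm{ad}_A^{\,m}x_\alpha(z_2)=\big(-\alpha(h)\log(1-z_2/z_1)\big)^m x_\alpha(z_2)$ for all $m\geqslant 0$, and summing the exponential series yields
\beq\non
E^+(h,z_1)\,x_\alpha(z_2)\,E^+(h,z_1)^{-1}=e^{-\alpha(h)\log(1-z_2/z_1)}x_\alpha(z_2)=(1-z_2/z_1)^{-\alpha(h)}x_\alpha(z_2),
\eeq
which is \eqref{ex-relation} after moving $E^+(h,z_1)$ back to the right.

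The identity \eqref{ex-relation1} is obtained in the same manner, now with $E^-(h,z_1)=\exp B$ and $B=-\sum_{n\geqslant 1}h(-n)z_1^{n}/n$; here \eqref{comheiqp} yields $[B,x_\alpha(z_2)]=\alpha(h)\log(1-z_1/z_2)\,x_\alpha(z_2)$, so that $E^-(h,z_1)x_\alpha(z_2)E^-(h,z_1)^{-1}=(1-z_1/z_2)^{\alpha(h)}x_\alpha(z_2)$, which rearranges to \eqref{ex-relation1}. The only genuine point of care---rather than a real obstacle---is bookkeeping the expansion conventions: the two identities live in different formal completions (powers of $z_2/z_1$ versus $z_1/z_2$), and one must ensure the logarithmic series used in each case is expanded consistently with the stated binomial factor. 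The algebra itself is the elementary $e^A X e^{-A}=e^{\mathrm{ad}_A}X$ computation, made effective by the fact that $x_\alpha(z_2)$ is an eigenvector for $\mathrm{ad}_A$ and $\mathrm{ad}_B$.
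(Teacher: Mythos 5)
Your proof is correct and is precisely the standard argument that the paper delegates to the cited references \cite{LP2,LW2} rather than writing out: one applies \eqref{comheiqp} to see that $x_\alpha(z_2)$ is an $\mathrm{ad}$-eigenvector for the exponent of $E^\pm(h,z_1)$ with eigenvalue $\mp\alpha(h)\log(1-z_2/z_1)$ (resp.\ the $z_1/z_2$ analogue), and then sums the $e^{\mathrm{ad}}$ series. The signs, the rearrangement from the conjugation identity to the stated form of \eqref{ex-relation1}, and the remarks on expansion conventions and well-definedness are all in order.
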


\subsection{Vertex operator formula}\label{ss:16}

From $x_{(k_{\alpha}+1)\alpha} (z)=0$ on $L(\Lambda)$ follows that $\text{exp}(zx_{\alpha}(z))$ is well defined. Then we have a generalization of the Frenkel--Kac \cite{FK}  vertex operator formula
\beq \label{voaformula}
\text{exp}(zx_{\alpha}(z))=E^{-}(-\alpha^\vee , z)\text{exp}(zx_{-\alpha}(z))E^{+}(-\alpha^\vee, z)e_{\alpha\sp\vee}z^{c_{\alpha}+\alpha \sp\vee},
\eeq
where $z^{c_{\alpha}+\al\sp\vee}$ is defined by
\beq\label{powerofz}
z^{c_{\alpha}+\al\sp\vee}v_{\mu}=v_{\mu}z^{k_{\alpha}+\mu (\al\sp\vee)},
\eeq
for  a vector $v_{\mu}$ of $\h$-weight $\mu$ (see \cite{LP, P}). The $\h$-weight components of the vertex operator formula \eqref{voaformula} give  relations
\beq\label{star}
\frac{1}{p!}(zx_{\alpha}(z))^{p}=\frac{1}{q!}E^{-}(-\alpha^\vee, z)(-zx_{-\alpha}(z))^qE^{+}(-\alpha^\vee, z)e_{\alpha\sp\vee}z^{c_{\alpha}+\alpha\sp\vee},
\eeq
 where $k_{\alpha}=p+q$, $p,q \geqslant 0$.

\section{Quasi-particle bases of standard modules}\label{s:2}
\makeatletter
\def\namedlabel#1#2{\begingroup
	#2%
	\def\@currentlabel{#2}%
	\phantomsection\label{#1}\endgroup
}
\makeatother

\subsection{Spanning sets for  standard modules}

We consider standard modules $L(\Lambda)$ of the highest weight $\Lambda$  given by \eqref{rect}.

\begin{lem} \label{lemsec21}
For any standard module  we have:
\begin{itemize}
\item[\namedlabel{itm4961}{(i)}]  $L(\Lambda)=U(\hhatm)\ts Q^\vee \ts \W$;
\item[\namedlabel{itm4962}{(ii)}] $L(\Lambda)=Q\sp\vee \ts\W$.
\end{itemize}
\end{lem}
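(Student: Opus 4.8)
The plan is to derive both identities from the Frenkel--Kac-type vertex operator formula \eqref{voaformula} (equivalently its weight components \eqref{star}), which is the only tool that converts lowering operators into raising operators, Heisenberg modes and a translation $e_{-\alpha\sp\vee}$. For \ref{itm4961} I would set $S=U(\hhatm)\ts Q\sp\vee\ts\W$ and check that $S$ is a $\gtilde$-submodule; since $v_\Lambda\in\W\subseteq S$ and $L(\Lambda)=U(\gtilde)v_\Lambda$, this forces $S=L(\Lambda)$. Stability under $c$, $d$, $\h$, the Heisenberg modes $h(j)$ with $j\neq0$, and the raising operators $x_\alpha(j)$ ($\alpha\in R_+$) is routine: one commutes the operator to the right using the Heisenberg relations together with \eqref{com1}--\eqref{com5}, noting that raising operators are carried through $e_\mu$ by \eqref{com5} and through $U(\hhatm)$ with only central corrections, and that $\hhatp$ annihilates $v_\Lambda$. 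The one substantial case is a lowering operator $x_{-\alpha}(j)$. Here I would solve \eqref{star} with $q=1$ for $x_{-\alpha}(z)$, obtaining $zx_{-\alpha}(z)=-\tfrac1{(k_\alpha-1)!}E^-(\alpha\sp\vee,z)(zx_\alpha(z))^{k_\alpha-1}z^{-c_\alpha-\alpha\sp\vee}e_{-\alpha\sp\vee}E^+(\alpha\sp\vee,z)$, and apply the factors to a vector of $S$ from right to left: $E^+(\alpha\sp\vee,z)$ (positive Heisenberg) keeps us in $S$ by the routine cases, $e_{-\alpha\sp\vee}$ and the grading operator $z^{-c_\alpha-\alpha\sp\vee}$ preserve $S$ by \eqref{com4} and \eqref{powerofz}, the raising factor $(zx_\alpha(z))^{k_\alpha-1}$ keeps us in $S$, and finally $E^-(\alpha\sp\vee,z)\in U(\hhatm)[[z]]$ keeps us in $S$. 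Reading off coefficients gives $x_{-\alpha}(j)S\subseteq S$, completing \ref{itm4961}.

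For \ref{itm4962} the extra task is to absorb the factor $U(\hhatm)$. Since $Q\sp\vee\ts\W$ is visibly stable under $Q\sp\vee$ and under $U(\ntilde)$, and since $h(-n)$ commutes with $e_\mu$ by \eqref{com4} and with $U(\ntilde)$ up to raising operators, the whole statement reduces to the single assertion
$$
\alpha\sp\vee(-n)\ts v_\Lambda\in Q\sp\vee\ts\W\qquad\text{for all coroots }\alpha\sp\vee\text{ and }n\geqslant1 .
$$
I would prove this by induction on $n$. The starting point is the exact identity $\alpha\sp\vee(-n)v_\Lambda=x_\alpha(m)\ts x_{-\alpha}(-n-m)v_\Lambda$, valid for any $m\geqslant1$ because $x_\alpha(m)v_\Lambda=0$ and the central term of the bracket vanishes for $n\geqslant1$. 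Substituting the expression $x_{-\alpha}(z)v_\Lambda=E^-(\alpha\sp\vee,z)Y(z)$ coming from the computation above, where $Y(z)$ has coefficients in $U(\ntilde)e_{-\alpha\sp\vee}v_\Lambda\subseteq Q\sp\vee\ts\W$, and then commuting $x_\alpha(m)$ through $E^-(\alpha\sp\vee,z)$ by means of \eqref{ex-relation1}, the right-hand side becomes $E^-(\alpha\sp\vee,z)$ applied to explicit vectors of $Q\sp\vee\ts\W$.

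The main obstacle is precisely that this substitution reintroduces negative Heisenberg modes through $E^-(\alpha\sp\vee,z)$, so a naive substitution is circular; the induction is what controls them. Every Heisenberg mode $\alpha\sp\vee(-j)$ produced either has index $j<n$, or occurs in a product all of whose indices are $<n$, in which case the inductive hypothesis---that such modes already preserve $Q\sp\vee\ts\W$---absorbs it; or it is the single top mode $\alpha\sp\vee(-n)$, which for degree reasons can only act on the depth-zero part of $Q\sp\vee\ts\W$, and that part is $\CC v_\Lambda$ because the top of the $d$-grading of $L(\Lambda)$ is one-dimensional. This last, self-referential, contribution is therefore a scalar multiple of $\alpha\sp\vee(-n)v_\Lambda$ itself; I expect its coefficient to be nonzero, so collecting it on the left lets me solve for $\alpha\sp\vee(-n)v_\Lambda$ in terms of vectors already known to lie in $Q\sp\vee\ts\W$, closing the induction and hence \ref{itm4962}. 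Formally \ref{itm4962} implies \ref{itm4961}, but I would still record \ref{itm4961} first, since its submodule proof isolates the vertex-operator computation that drives the induction in \ref{itm4962}.
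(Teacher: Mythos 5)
Your argument for \ref{itm4961} is correct and is essentially the proof in the paper, repackaged as a submodule check: the only non-routine generators are the $x_{-\alpha}(j)$, and solving the appropriate component \eqref{star} of the vertex operator formula for $x_{-\alpha}(z)$ converts it into raising operators, Heisenberg modes, $e_{-\alpha\sp\vee}$ and a power of $z$, each of which visibly preserves $U(\hhatm)\ts Q\sp\vee\ts\W$ by \eqref{com1}--\eqref{com5} and \eqref{ex-relation}--\eqref{ex-relation1}.

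The gap is in \ref{itm4962}. Your route through the $q=1$ component reintroduces $E^-(\alpha\sp\vee,z)$ on the right-hand side, and your handling of the resulting self-referential term is not sound as written. First, the supporting claim that ``the top of the $d$-grading of $L(\Lambda)$ is one-dimensional'' is false for the rectangular weights considered here: for $\Lambda=k_0\Lambda_0+k_j\Lambda_j$ with $k_j>0$ the degree-zero subspace is the irreducible $\g$-module of highest weight $k_j\omega_j$. (What you actually need --- that its weight-$\Lambda\vert_{\h}$ subspace is $\CC v_\Lambda$ --- happens to be true, but it is not the statement you invoke.) Second, and more seriously, the induction closes only if the coefficient you collect in front of $\alpha\sp\vee(-n)v_\Lambda$ on the left is nonzero; you merely ``expect'' this, and nothing in the sketch rules out its vanishing for the chosen $m$, so the key step is unverified. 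The paper sidesteps all of this by taking the $q=0$ component of \eqref{star}, which upon inversion reads
\begin{equation*}
E^{-}(-\alpha\sp\vee,z)=\tfrac{1}{k_\alpha !}\,\bigl(zx_{\alpha}(z)\bigr)^{k_\alpha}\,z^{-c_\alpha-\alpha\sp\vee}\,e_{-\alpha\sp\vee}\,E^{+}(\alpha\sp\vee,z),
\end{equation*}
an identity with \emph{no} negative Heisenberg modes on the right. Every factor on the right preserves $Q\sp\vee\ts\W$, and monomials in the coefficients of the series $E^{-}(-\alpha_i\sp\vee,z)$, $i=1,\ldots,l$, span $U(\hhatm)$; hence $U(\hhatm)$ preserves $Q\sp\vee\ts\W$ and \ref{itm4962} follows from \ref{itm4961} with no induction and no nondegeneracy condition to check. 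You should replace your induction by this observation.
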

\begin{proof} 
Since $\ghat$ is generated by the  Chevalley generators $e_i$ and $f_i$, $i=0, \ldots, l$, the standard module $L(\Lambda)$ is clearly the span of noncommutative monomials in $x_{\pm\alpha_i}(m)$, $i=1,\ldots,l$ and $m \in \ZZ$, acting on the highest weight vector $v_{\Lambda}$. By using the vertex operator formula \eqref{voaformula} on $L(\Lambda)$ we can express $x_{-\alpha_i}(m)$ in terms of $x_{\alpha_i}(m')$, the Weyl group translation operator $e_{-\alpha\sp\vee}$ and a polynomial in $U(\hhat)$. By using the commutation relations \eqref{com1}--\eqref{com5} we see that \ref{itm4961} holds. Since monomials in coefficients of $E^{-}(-\alpha_i,z)$ span $U(\hhatm)$, one can prove that \ref{itm4961} implies \ref{itm4962} by using  relation \eqref{star} for $q=0$ and  commutation relations  \eqref{com1}--\eqref{com5}.
\end{proof}
\begin{ax}
The second statement in Lemma \ref{lemsec21} implies that 
\beq\non
\left\{ e_{\mu}v\,  \left|\right. \, \mu \in Q\sp\vee, v \in \BVfi\right\}
\eeq
is a spanning set of $L(\Lambda)$. However this is not a basis. For example, in the $\slhat_2$-module $L(\Lambda_0)$ we have the relation
\beq\non
x_{\alpha}(-1)v_{\Lambda_0}=-e_{\alpha\sp\vee}v_{\Lambda_0}.
\eeq
\end{ax}

Consider the basis $B_{U(\hhatm)}$ of the irreducible $\mathfrak{s}$-module     $M(k)$ of level $k$,
\begin{align*}
B_{U(\hhatm)}=
\left\{ \right.
h_{\alpha_l}\ldots h_{\alpha_1}\,  \,
\left|\right. \,\, &
h_{\alpha_i}=(\alpha_i^{\vee}(-j_{t_i,i}))^{r_{t_i,i}}\ldots (\alpha_i^{\vee}(-j_{1,i}))^{r_{1,i}},\,\, r_{p,i}, j_{p,i}\in \mathbb{N},\\   
& j_{1,i}\leqslant \ldots \leqslant j_{t_{i},i},\, t_i\in\ZZ_{\geqslant 0},\, \,
 p =1\ldots , t_i,  \,\, 
  i= 1,\ldots . l \left.\right\}.
\end{align*}
For the elements  $h=h_{\alpha_l}\cdots h_{\alpha_1},   
\overline{h} =\overline{h}_{ \alpha_l}\cdots \overline{h}_{ \alpha_1}\in B_{U(\hhatm)}$
of fixed degree, we will write $h<\overline{h}$ if
\begin{enumerate}[(1)]
\item[\namedlabel{it4961}{(1)}]  $(r_{t_l,l}, \ldots, r_{1,1}) < (\overline{r}_{\overline{t}_l,l}, \ldots, \overline{r}_{1,1}) $ \,\, or
\item[\namedlabel{it4962}{(2)}]   $(r_{t_l,l}, \ldots, r_{1,1}) = (\overline{r}_{\overline{t}_l,l}, \ldots, \overline{r}_{1,1}) $\,\, and\,\,  $(j_{t_l}, \ldots, j_1) < (\overline{j}_{\overline{t}_l}, \ldots, \overline{j}_1) $,
\end{enumerate}
where the order "$<$" in \ref{it4961} and \ref{it4962} is defined in the same way as the linear order on charge-types in Subsection \ref{ss:13}.
For the purpose of proving the next lemma we now generalize the linear order on the quasi-particle monomials, as defined in Subsection \ref{ss:13}, as well as  the linear order on the elements of $B_{U(\hhatm)}$, to the set
\beq\label{starr}
\left\{ e_{\mu}\ts h\ts b \ts v_{\Lambda}  \,\,\big|\big.\,\, \mu \in Q\sp\vee, h \in  B_{U(\hhatm)}, b \in M_{QP}'\right\}
\eeq
as follows. For any two  vectors $e_{\mu}hb\vmax$, $e_{\mu'} \overline{h} \overline{b}\vmax$ in \eqref{starr} of fixed degree and $\h$-weight denote the color-types of 
$b$ and $\overline{b}$ by $\mathcal{C}$ and $\overline{\mathcal{C}}$ respectively.  Now, we write $e_{\mu}hb\vmax < e_{\mu'} \overline{h} \overline{b}\vmax$ if one of the following conditions holds:
\begin{enumerate}[(1)]
\item  $\chg b > \chg  \overline{b}$
\item $\chg b = \chg \overline{b}$\,\, and\,\,$\mathcal{C}<\overline{\mathcal{C}}$
\item   $\mathcal{C}=\overline{\mathcal{C}}$\,\, and\,\, $\en b < \en \overline{b}$
\item   $\mathcal{C}=\overline{\mathcal{C}}$,\,\,  $\en b = \en \overline{b}$\,\, and\,\, $b < \overline{b}$
\item $  b=  \overline{b}$\,\, and\,\, $h< \overline{h}$.
\end{enumerate}

Introduce the set
$$
\mathfrak{B}_{L(\Lambda)}= \left\{ e_{\mu}\ts h\ts b\ts v_{ \Lambda} \,\,\big|\big.\,\, \mu \in Q\sp\vee,\, h \in B_{U(\hhatm)},\, b \in B'_{W_{L(\Lambda)} }\right\}.
$$
Now, we have
\begin{lem}\label{lemsec22}
  For any positive integer $k$  the set $\mathfrak{B}_{L(\Lambda)}$
spans $L(\Lambda)$.
\end{lem}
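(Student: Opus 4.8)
The plan is to start from the spanning set furnished by Lemma \ref{lemsec21} together with the basis of the principal subspace in Theorem \ref{thm:1}, and then to remove the maximal-charge quasi-particles one at a time by means of the vertex operator formula. By Lemma \ref{lemsec21}\ref{itm4962} we have $L(\Lambda)=Q\sp\vee\ts\W$, and since $\BVfi$ is a basis of $\W$ by Theorem \ref{thm:1}, the set $\left\{e_\mu\ts b\ts\vmax \mid \mu\in Q\sp\vee,\ b\in B_{\W}\right\}$ already spans $L(\Lambda)$. Hence it suffices to show that each such vector $e_\mu\ts b\ts\vmax$ with $b\in B_{\W}$ lies in the span of $\BLcc$. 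If $b\in B'_{\W}$, i.e. $b$ contains no maximal-charge quasi-particle $x_{k_{\alpha_i}\alpha_i}(r)$, then $e_\mu\ts b\ts\vmax\in\BLcc$ with Heisenberg factor $h=1$, and there is nothing to prove. I would treat the general case by induction on the total charge $\chg b$, the base case $\chg b=0$ being $b=1$, for which $e_\mu\ts\vmax\in\BLcc$.

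For the inductive step, suppose $b$ contains a maximal-charge quasi-particle, and take the rightmost such factor, say $x_{k_{\alpha_i}\alpha_i}(m_{1,i})$, which sits at the right end of the color-$i$ block with only quasi-particles of colors $i-1,\ldots,1$ to its right. Since $x_{k_{\alpha_i}\alpha_i}(z)=x_{\alpha_i}(z)^{k_{\alpha_i}}$, relation \eqref{star} with $p=k_{\alpha_i}$ and $q=0$ lets me replace this vertex operator by the abelianized expression
\beq\non
x_{\alpha_i}(z)^{k_{\alpha_i}}=k_{\alpha_i}!\ts z^{-k_{\alpha_i}}E^{-}(-\alpha_i\sp\vee,z)\ts E^{+}(-\alpha_i\sp\vee,z)\ts e_{\alpha_i\sp\vee}\ts z^{c_{\alpha_i}+\alpha_i\sp\vee},
\eeq
and extract the coefficient of $z^{-m_{1,i}-k_{\alpha_i}}$. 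I would then move the three resulting pieces into their canonical positions: the factor $E^{+}(-\alpha_i\sp\vee,z)$ is commuted to the right past the remaining quasi-particles by \eqref{ex-relation}, producing only scalar power series in quotients of the formal variables, and acts as the identity on $\vmax$ since $\hhatp\vmax=0$; the translation operator $e_{\alpha_i\sp\vee}$ is commuted to the left past the remaining quasi-particles by \eqref{com5}, which merely shifts their energies, and is absorbed into the prefactor $e_\mu$, giving (up to a scalar) $e_{\mu+\alpha_i\sp\vee}$, still indexed by $Q\sp\vee$; the operator $E^{-}(-\alpha_i\sp\vee,z)$, whose coefficients lie in $U(\hhatm)$, is commuted to the left by \eqref{ex-relation1} to be collected into the Heisenberg factor. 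Crucially, none of these commutations creates new quasi-particles: the scalar corrections only recombine coefficients of the quasi-particles already present, so the quasi-particle content loses the charge-$k_{\alpha_i}$ factor and the total charge drops by $k_{\alpha_i}\geqslant 1$.

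What remains is a sum of terms $e_{\mu'}\ts h'\ts b'\ts\vmax$ with $\mu'\in Q\sp\vee$, $h'\in U(\hhatm)$, and $b'$ a quasi-particle monomial of strictly smaller total charge $\chg b'<\chg b$. I would then re-expand each $b'\ts\vmax\in\W$ in the basis $\BVfi$ via Theorem \ref{thm:1}, rewrite $h'$ in the PBW basis $B_{U(\hhatm)}$ of $M(k)$, and push all translation operators to the far left using \eqref{com1}--\eqref{com5}; this brings every term into the canonical shape $e_{\mu''}\ts h''\ts b''\ts\vmax$ with $h''\in B_{U(\hhatm)}$ and $b''\in B_{\W}$. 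Since $\chg b''\leqslant\chg b'<\chg b$, the induction hypothesis applies to each term and completes the argument. The main obstacle I expect is the bookkeeping in the inductive step: verifying rigorously that every correction produced by the commutations \eqref{ex-relation}, \eqref{ex-relation1} and \eqref{com5} indeed has strictly smaller total charge, so that the induction is well-founded, and that the relations among quasi-particles allow the reduction to canonical form exactly as in \cite{Bu1,Bu2,Bu3,Bu5,BK}. The linear order introduced on the set \eqref{starr} is precisely what controls this termination.
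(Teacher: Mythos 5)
Your proposal is correct in substance, but it is organized quite differently from the paper's proof. The paper starts from the larger spanning set $\left\{ e_{\mu}\ts h\ts b\ts v_{\Lambda}\right\}$ of Lemma \ref{lemsec21}\ref{itm4961}, first eliminates \emph{all} maximal-charge quasi-particles via \eqref{star} to reach the intermediate set \eqref{starr} (whose monomials no longer satisfy the difference conditions), and then performs an explicit straightening of \eqref{starr} using the quasi-particle relations of \cite[Lemmas 1 and 2]{Bu4} together with the linear order defined on \eqref{starr}, re-applying \eqref{star} whenever straightening recreates a charge-$k_{\alpha_i}$ factor. You instead run a single induction on the total charge: strip one maximal-charge quasi-particle, and then re-expand the resulting principal-subspace vectors $b'v_\Lambda$ in the basis $\mathfrak{B}_{W_{L(\Lambda)}}$ of Theorem \ref{thm:1}, which black-boxes the entire straightening step. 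Since the expansion of $b'v_\Lambda$ preserves the $\h$-weight, hence the color-type and the total charge, your recursion is well-founded, and your route buys real modularity: there is no need to extend the linear order to \eqref{starr}, to re-verify how the relations of \cite{Bu4} interact with the factors $e_\mu$ and $h$, or to invoke the extra lemmas of \cite{Bu5} separately for $k_j>0$, since Theorem \ref{thm:1} already covers all rectangular $\Lambda$. The paper's route, by contrast, stays at the level of the relations themselves and matches the arguments of \cite{G2,Bu1,Bu2,Bu3,Bu5,BK}. One point you should tighten: your induction hypothesis is stated only for vectors $e_\mu\ts b\ts v_\Lambda$, yet after the reduction you apply it to terms $e_{\mu''}\ts h''\ts b''\ts v_\Lambda$ carrying a nontrivial Heisenberg factor. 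Either strengthen the inductive statement to include $h\in U(\hhatm)$, or observe that $\spn\mathfrak{B}_{L(\Lambda)}$ is stable under left multiplication by $U(\hhatm)$ --- which holds because $\hhatm$ is abelian, $h''$ commutes with each $e_\nu$ by \eqref{com4}, and $h''g$ is again a scalar multiple of an element of $B_{U(\hhatm)}$. With that adjustment the argument is complete.
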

\begin{proof}
Suppose that $\Lambda=k\Lambda_0$.
By Lemma \ref{lemsec21} \ref{itm4961}, the set of vectors
$$
\left\{ e_{\mu}\ts h\ts b\ts v_{\Lambda}  \,\big|\big.\,  \mu \in Q\sp\vee, \, h \in B_{U(\hhatm)},\, b \in B_{W_{L(\Lambda)}}\right\}
$$
spans $L(\Lambda)$. If a quasi-particle monomial $b$ contains a quasi-particle $x_{k_{\alpha}\alpha}(m)$, we use the relation \eqref{star} to replace it with $e_{\alpha\sp\vee}$ and monomials of the Heisenberg  subalgebra elements. 
By using  commutation relations \eqref{com5} and \eqref{comheiqp}, we move $e_{\alpha\sp\vee}$ and the  elements of $\hhatm$ to the left, and the elements of $\hhatp$ to the right. As a result, we get a linear combination of monomials $e_{\mu'}h'b' v_{\Lambda} $, where $\mu' \in Q\sp\vee$, $h' \in B_{U(\hhatm)}$, and quasi-particle monomials $b'$  contain one quasi-particle $x_{k_{\alpha}\alpha}(m)$ less than $b$ and do not necessarily satisfy the difference conditions. Hence we can reduce the spanning set of $L(\Lambda)$ to  set  \eqref{starr}.

We can now prove the lemma by using the linear order "$<$"  and  applying almost verbatim Georgiev's arguments in the proof of  \cite[Theorem 5.1]{G2}.
More precisely, we use the relations in \cite[Lemmas 1 and 2]{Bu4} and  relation \eqref{star} to reduce  the spanning set  \eqref{starr} to its subset $\mathfrak{B}_{L(\Lambda)}$ with elements satisfying combinatorial difference and initial conditions, and the order "$<$" is designed in such a way that, by applying the relations, vectors in  \eqref{starr} which do not satisfy combinatorial difference and initial conditions can be expressed in terms greater elements in \eqref{starr}.

The aforementioned lemmas from \cite{Bu4}, which are a consequence of the commutation relations in $\gtilde$, were used in \cite{Bu1, Bu2,Bu3,BK} to reduce the spanning set of the principal subspace of the generalized Verma module of   highest weight $\Lambda$ (cf. \cite[Theorem 1]{Bu4}, \cite[Theorem 3.1]{BK}). 
As for the principal subspace $W_{L(\Lambda)}$ of level $k$, we also need  the 
following 
relations on $L(\Lambda)$:
\beq\non x_{p\alpha}(z)=0 \quad \text{for} \quad p>k_{\alpha} .
\eeq
We can combine these relations with  \eqref{star} in  order to eliminate  all vectors whose quasi-particle monomials contain   $x_{k_{\alpha}\alpha}(j)$
from spanning set \eqref{starr}. 

By applying \cite[Lemma 1]{Bu4} to eliminate all vectors $e_{\mu}hb v_{\Lambda}$ such that the quasi-particle monomial $b$ does not  satisfy the difference conditions, we obtain vectors with quasi-particle monomial factors of the form $x_{(n'+1)\alpha_i}(m')$, which are, by definition of  order "$<$", greater than $e_{\mu}hb v_{\Lambda}$.
We now proceed by closely following the argument from the proof of  \cite[Theorem 3.1]{BK}.
More specifically, 
the vectors $e_{\mu}hb' v_{\Lambda}$, whose factors satisfy $ n'+1 <k_{\alpha_i}$, are greater than $e_{\mu}hb v_{\Lambda}$ with respect to the linear order "$<$", with $b<b'$ having the same color-charge-type and total energy, as in the proof of  \cite[Theorem 3.1]{BK}. On the other hand, if $n'+1=k_{\alpha_i}$, we use  relation \eqref{star} to express  $x_{(n'+1)\alpha_i}(m')$ in terms of $e_{\alpha\sp\vee}$ and Heisenberg Lie algebra elements, thus obtaining vectors  $e_{\mu'}h'b' v_{\Lambda}$ such that  $\chg b'<\chg b$, so again the resulting vectors are greater with respect to  "$<$".

Finally, since \cite[Lemma 2]{Bu4} relates quasi-particle monomials of the same total charge and total energy, we can eliminate the vectors $e_{\mu}hbv_{\Lambda}$ such that the quasi-particle monomials $b$ do not satisfy the difference conditions by arguing as in the proof of \cite[Theorem 3.1]{BK}.

The general case $\Lambda=k_0\Lambda_0 +k_j\Lambda_j$ with $k_j> 0$   is verified  analogously. However, the argument   employs two additional results, \cite[Lemmas 2.0.1 and 2.0.2]{Bu5}.
\end{proof}

\subsection{The main theorem}\label{ss:21}
Consider the decomposition 
$$
L(\Lambda)=\coprod_{r\in\ZZ} L(\Lambda)_r,
\qquad\text{where}\qquad
L(\Lambda)_r=\coprod_{r_2, \ldots, r_l\in\ZZ} L(\Lambda)_{k\Lambda\arrowvert_{\h}+r_l\alpha_l+\cdots +r_2\alpha_2+r\alpha_1}.
$$
By complete reducibility of tensor products
of standard modules we have
\beq\label{spc496}
L(\Lambda) \subset  L(\Lambda_{j^ k })  \ot\ldots \ot     L(\Lambda_{j^1}) 
\eeq
with the highest weight vector
$$
v_{L(\Lambda)}=v_{L( \Lambda_{j^k})}  \ot\ldots \ot v_{L( \Lambda_{j^{1}})} ,
\quad\text{where}\quad
j^{r}=
\begin{cases}
0,&\text{ if }r=1,\ldots ,k_0,\\
j,&\text{ if }r=k_0+1,\ldots ,k.
\end{cases}
$$
In the proof of  Theorem \ref{main} we will use the Georgiev-type projection 
\beq\label{proj}
\pi_{\mathcal{R}_{\alpha_1}}: \big(L(\Lambda_j)^{\ot k_j} \ot     L(\Lambda_0)^{\ot k_0}\big)_{r_1 } \to 
L(\Lambda_{j^k})_{r_1^{(1)}} \otimes \ldots  \otimes L(\Lambda_{j^1})_{r_1^{(k)}},
\eeq
where $\textstyle\mathcal{R}_{\alpha_1}= (r_1^{(1)}, r_1^{(2)}, \ldots, r_1^{(k_{\alpha_1})} )$ is a fixed dual-charge type for the color $ 1$ and  $r_1=\sum_{t=1}^{k_{\alpha_1}} r_1^{(t)}$. 
The projection  can be generalized to the space of formal series with coefficients in \eqref{spc496}. We denote this generalization by $\pi_{\mathcal{R}_{\alpha_1}}$ as well.
Recall that the image of 
\begin{align*}
e_{\mu}\,
(\alpha_l^{\vee}(-j_{t_l,l}))^{r_{t_l,l}}\ldots (\alpha_1^{\vee}(-j_{1,1}))^{r_{1,1}}\,
x_{n_{r_{l}^{(1)},l}\alpha_{l}}(m_{r_{l}^{(1)},l}) \ldots  x_{n_{1,l}\alpha_{l}}(m_{1,l}) & \\
\ldots
x_{n_{r_{1}^{(1)},1}\alpha_{1}}(m_{r_{1}^{(1)},1}) \ldots  x_{n_{1,1}\alpha_{1}}(m_{1,1})&\,v_{L( \Lambda)}
\end{align*}
with respect to $\pi_{\mathcal{R}_{\alpha_1}}$,
where the monomial 
$b(\alpha_1)=x_{n_{r_{1}^{(1)},1}\alpha_{1}}(m_{r_{1}^{(1)},1}) \ldots  x_{n_{1,1}\alpha_{1}}(m_{1,1})$
is of dual-charge-type $\mathcal{R}_{\alpha_1}$, coincides with the coefficient of the 
corresponding
projection of the generating function
\begin{align*}
e_{\mu}\,
(\alpha_{l,-}^{\vee}(w_{t_l,l}))^{r_{t_l,l}}\ldots ( \alpha_{1,-}^{\vee }(w_{1,1}))^{r_{1,1}}\,
x_{n_{r_{l}^{(1)},l}\alpha_{l}}(z_{r_{l}^{(1)},l}) \ldots  x_{n_{1,l}\alpha_{l}}(z_{1,l})&\\
\ldots
x_{n_{r_{1}^{(1)},1}\alpha_{1}}(z_{r_{1}^{(1)},1}) \ldots  x_{n_{1,1}\alpha_{1}}(z_{1,1})&\, v_{L( \Lambda)},
\end{align*} 
where $\alpha_{i,-}^{\vee} (z)=\sum_{m<0} \alpha_i^{\vee} (m) z^{-m-1}$.
 For more details see  for example  \cite[Section 5.2]{BK}.

\begin{thm}\label{main}
For any highest weight $\Lambda$ as in \eqref{rect} the set $\mathcal{B}_{L(\Lambda)}$
is a basis of $L(\Lambda)$.
\end{thm}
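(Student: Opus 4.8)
The spanning property is already established in Lemma \ref{lemsec22}, so the whole content of the theorem is the \emph{linear independence} of $\mathfrak{B}_{L(\Lambda)}$, and the plan is to generalize Georgiev's projection argument from the $\sll_{l+1}$ case to every untwisted type. First I would take a hypothetical finite relation $\sum c_{\mu,h,b}\, e_\mu\ts h\ts b\ts\vmax=0$ and reduce it, by comparing weights, to one all of whose summands share a single weight with respect to $\hhat\oplus\CC d$. Since $L(\Lambda)$ is integrable its weight spaces are finite dimensional, so only finitely many triples $(\mu,h,b)$ with $\mu\in Q^\vee$, $h\in B_{U(\hhatm)}$ and $b\in B'_{W_{L(\Lambda)}}$ remain. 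The machinery is then the embedding \eqref{spc496} of $L(\Lambda)$ into the tensor product $L(\Lambda_{j^k})\ot\cdots\ot L(\Lambda_{j^1})$ of fundamental (level-one) modules together with the Georgiev-type projections $\pi_{\mathcal{R}_{\alpha_1}}$ from \eqref{proj}, which sort the surviving terms by the combinatorial data attached to $b$.

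The core is an induction that peels the quasi-particles off one color at a time, starting with color $1$. For each admissible color-$1$ dual-charge-type $\mathcal{R}_{\alpha_1}$ I would apply $\pi_{\mathcal{R}_{\alpha_1}}$, which annihilates every term whose color-$1$ charges are distributed differently among the tensor factors and, on the remaining terms, sends each factor $x_{n\alpha_1}(z)$ into a definite slot of the tensor product. On a level-one module the color-$1$ quasi-particles act through the Frenkel--Kac vertex operator formula \eqref{voaformula}, so extracting the coefficient of the leading monomial in the formal variables $z_{p,1}$ of the associated generating function---using the exchange relations \eqref{ex-relation}--\eqref{ex-relation1} and the commutation relation \eqref{comheiqp} to move the $E^\pm$ factors past the remaining quasi-particles---strips the color-$1$ part while retaining a nonzero leading scalar. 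The difference and initial conditions defining $B'_{W_{L(\Lambda)}}$ in Theorem \ref{thm:1} are exactly what guarantee that the leading monomials coming from distinct admissible color-$1$ data do not coincide, so no cancellation can take place at this stage.

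Iterating the same step over the colors $2,\ldots,l$ reduces the relation to one supported on vectors $e_\mu\ts h\ts\vmax$ carrying no quasi-particles. There linear independence is transparent: the Heisenberg factor $h$ ranges over the PBW-type basis $B_{U(\hhatm)}$ of $U(\hhatm)\cong M(k)$, which acts freely on $\vmax$, and the translation operators $e_\mu$, $\mu\in Q^\vee$, carry $\vmax$ to linearly independent vectors sitting in distinct weight spaces. Throughout the reduction the relations \eqref{com1}--\eqref{com5}, and in particular the energy shift $e_{\alpha^\vee}x_\beta(j)e_{\alpha^\vee}^{-1}=x_\beta(j-\beta(\alpha^\vee))$ of \eqref{com5}, govern how $e_\mu$ is conjugated past the Heisenberg and quasi-particle factors; reconciling these shifts with the weight normalization fixed at the outset forces every coefficient $c_{\mu,h,b}$ to vanish.

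I expect the main obstacle to be the inter-color bookkeeping in the non-simply-laced setting. In type $A$ the only off-diagonal pairing is $\left<\alpha_i,\alpha_r\right>=-1$, whereas here one must control the leading-term combinatorics when $\left<\alpha_i,\alpha_r\right>\in\{-2,-3\}$ and when the levels $k_{\alpha_i}$ differ between colors; this is precisely where the ratios $\mu_i=k_{\alpha_i}/k_{\alpha_{i'}}$ and the truncated sums $\sum_{q}\min\{\mu_i n_{q,i'},n_{p,i}\}$ of Theorem \ref{thm:1} enter, and proving that the corresponding leading monomials still separate the admissible data is the delicate point. A secondary nuisance is the careful tracking of the fractional-power and energy shifts produced by $e_\mu$, which must be kept consistent with the tensor-factor decomposition used by the projections $\pi_{\mathcal{R}_{\alpha_1}}$.
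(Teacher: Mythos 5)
Your overall strategy---embedding $L(\Lambda)$ into the tensor product \eqref{spc496} of level-one modules, applying Georgiev-type projections $\pi_{\mathcal{R}_{\alpha_1}}$, and peeling off quasi-particles one color at a time---is the same as the paper's. But there is a genuine gap at the very first step of the induction. Within a linear combination of fixed $\h$-weight, a vector $e_{\mu'}\ts h'\ts b'\ts\vmax$ whose $\mu'$ has a \emph{nonzero} $\alpha_1^\vee$-coordinate and whose monomial $b'$ has \emph{smaller} color-$1$ charge occupies the same weight space as a vector $e_{\mu}\ts h\ts b\ts\vmax$ with $\alpha_1^\vee$-coordinate zero and larger $\chgI b$ (the remark after Lemma \ref{lemsec21}, $x_{\alpha}(-1)v_{\Lambda_0}=-e_{\alpha\sp\vee}v_{\Lambda_0}$, shows this interference is real). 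Your assertion that the $e_\mu$ ``carry $\vmax$ to linearly independent vectors sitting in distinct weight spaces'' is only true at the final stage when $b=1$; during the induction it begs the question. The paper resolves this by first using bijectivity of $e_\nu$ and \eqref{com3} to normalize so that all maximal-$\chgI$ summands have $\alpha_1^\vee$-coordinate zero, and then proving the key observation that $\pi_{\mathcal{R}_{\alpha_1}}$ \emph{annihilates} every remaining summand with positive $\alpha_1^\vee$-coordinate, because $e_{\alpha_1^\vee}$ acts diagonally on the tensor factors and in particular raises the color-$1$ weight of the last factor out of the projection's target window. Without this step the projected relation still mixes terms of different charge and the coefficient extraction does not close.

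A second, smaller gap is the short-root colors with $k_{\alpha_s}>k$, which you flag as ``the delicate point'' but do not resolve. There a single level-one tensor factor can absorb color-$s$ quasi-particles of charge up to $\nu_s=2$ or $3$, so the naive one-charge-per-factor projection fails; the paper uses a modified projection grouping $r_s^{(1)}+r_s^{(2)}$ (resp.\ triples for $G_2^{(1)}$) per factor, followed by a second projection $\pi'_{\mathcal{R}_{\alpha_s}}$ obtained by restricting to the subalgebra of type $A^{(1)}_{l-s+1}$ generated by the short simple roots and realizing its level-$k_{\alpha_s}$ modules inside tensor products of level-one modules, after which the same key observation is applied again. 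These two devices are the actual content distinguishing the non-simply-laced proof from Georgiev's, and a complete argument must supply them.
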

\begin{proof}
We prove linear independence of  the spanning set 
$\BLcc$
 by slightly modifying the arguments in \cite{Bu1,Bu2,Bu3,Bu5,BK}. We consider a finite linear combination of vectors in 
 $\BLcc$,
\beq \label{maindk1}
\sum c_{\mu, h, b}\ts e_{\mu}\ts h\ts b\ts v_{L( \Lambda )}=0
\eeq
of the fixed degree $n$ and $\h$-weight $\rho$. Our goal is to show that all coefficients $c_{\mu, h, b}$ are zero. Since 
$e_{\nu}$, $\nu \in Q\sp\vee$, is a linear bijection and since   \eqref{com3}  implies that for any $\h$-weight $\psi$ the image of 
$V_{\psi}$ under the action of $e_{\alpha_i\sp\vee}$  is in $V_{\psi + k_{\alpha_i}\alpha_i}$,
we may assume that for all summands  in \eqref{maindk1} with the monomial $b$ of the maximal $\chgI b$ the corresponding $\mu$ has $\alpha_1\sp\vee$ coordinate zero. That is, we assume that in (\ref{maindk1}) appear summands of the form
\begin{align}
&e_{\mu}\ts h\ts b\ts v_{L(\Lambda)}, \text{ with } \chgI b=r_1 \text{ and } \mu =c_l\alpha_l\sp\vee +\cdots + c_2\alpha_2\sp\vee ,\qquad\text{or}\label{AAA}\tag{A}\\
&e_{\mu'}\ts h'\ts b'\ts v_{L(\Lambda)}, \text{ with } \chgI b'=r'<r_1  \text{ and } \mu' =c'_l\alpha_l\sp\vee +\cdots + c'_1\alpha_1\sp\vee,\text{ where }c_1'>0.\label{BBB}\tag{B}
\end{align}

In the sum \eqref{maindk1}, among the monomials $b$ with $\chgI b=r_1$, choose a monomial $b_0$ with the maximal charge type $\mathcal{R}'_{\alpha_1}$ and the corresponding dual-charge-type 
$$
\mathcal{R}_{\alpha_1}=\left(r_1^{(1)}, r_1^{(2)}, \ldots, r_1^{(p)}\right)
$$
for the color $i=1$, where $p < k_{\alpha_1}$ and $r_1=r_1^{(1)}+ \ldots +r_1^{(p)}$. Denote by $\pi_{\mathcal{R}_{\alpha_1}}$ the Georgiev-type projection 
\eqref{proj},
where $r_1^{(t)}=0$ for $t > p$. 
Our key observation is that for the vectors of the form \eqref{BBB} we have
$
\pi_{\mathcal{R}_{\alpha_1}}(e_{\mu'}\ts h'\ts b'\ts v_{L(\Lambda)})=0,
$
since
$$
e_{\alpha_1\sp\vee}\ts (v_{L(\Lambda_{j^k})}\ot\ldots \ot v_{L(\Lambda_{j^1})}) 
=e_{\alpha_1\sp\vee} v_{L(\Lambda_{j^k})}\ot\ldots \ot e_{\alpha_1\sp\vee} v_{L(\Lambda_{j^1})}  ,
$$
and hence
\beq \non
e_{\mu'}\ts h'\ts b'\ts v_{L(\Lambda)} \in \coprod_{\substack{r_1, \ldots, r_{k-1}\in\ZZ\\ r_k>0}} L(\Lambda_{j^k})_{r_1} \otimes \cdots \otimes L(\Lambda_{j^1})_{r_k}.
\eeq
This means that the $\pi_{\mathcal{R}_{\alpha_1}}$ projection of the sum \eqref{maindk1} contains only the projections of the summands of the form \eqref{AAA}.

Hence we can proceed with Georgiev-type argument, i.e. with iterated use of the simple current operator, the Weyl group translation operator and the intertwining operators as in \cite{Bu1,Bu2,Bu3,Bu5,BK} (see, e.g., \cite[Section 5.3]{BK}) and briefly outlined in \cite[Section 4]{Bu4}, until we reduce   \eqref{maindk1} to a linear combination of vectors $e_{\mu}hb\vmax$ such that charge $\chgI b=0$, i.e. of vectors with no quasi-particles of color $\alpha_1$. Then we start with a similar argument for the color $\alpha_2$ by choosing the monomials with the maximal $2$-charge and the corresponding Georgiev-type projection.

It should be noted that the Georgiev-type procedure for $\alpha_1$ changes in some vectors (\ref{maindk1}) the energies of the (projected) quasi-particle monomials for some $\alpha_2, \dots, \alpha_l$, but their dual-charge types $\mathcal{R}_{\alpha_2}, \dots, \mathcal{R}_{\alpha_l}$ are not changed and, moreover, the changed vectors satisfy the combinatorial initial and difference conditions; see, e.g., \cite[Proposition 3.4.1]{Bu1}. 

At some point of our argument we shall have to consider the linear combination of vectors projected from \eqref{maindk1} such that 
\beq \non 
\chgI b=\ldots =\chgsMI b=0
\eeq
for all $b$ and $\chgs b\neq 0$ for some $b$ and a short root $\alpha_s$. Then $k_{\alpha_s}=k$ in the case $\gtilde$ is of type $D_l^{(1)}$, $E_6^{(1)}$, $E_7^{(1)}$ and $E_8^{(1)}$, $k_{\alpha_s}=2k$ in the case of $B_l^{(1)}$, $C_l^{(1)}$ and $F_4^{(1)}$, and $k_{\alpha_s}=3k$ in the case of $G_2^{(1)}$. Again, in the same way, we choose the monomials with the maximal $s$-charge $\chgs$ and then, among them, we find the monomial $b$ with the maximal charge-type for the color $s$,
$$
\mathcal{R}_{\alpha_s}=\left(r_s^{(1)}, r_s^{(2)}, \ldots, r_s^{(p)}\right),
$$ 
 where $p < k_{\alpha_s}$ and $r_s=r_s^{(1)}+ \ldots +r_s^{(p)}$.
For the first short simple root $\alpha_s$ we consider monomial vectors with $k_{\alpha_s}>k$ quasi-particles of color $s$, so for $k_{\alpha_s}=2k$ we consider the modified Georgiev-type projection
$$
\pi_{\mathcal{R}_{\alpha_s}}: \big(L(\Lambda_j)^{\ot k_j} \ot     L(\Lambda_0)^{\ot k_0}\big)_{r_s } \to 
L(\Lambda_{j^k})_{r_s^{(1)}+r_s^{(2)}} \otimes \ldots  \otimes L(\Lambda_{j^1})_{r_s^{(2k-1)}+r_s^{(2k)}},
$$
where $r_s^{(t)}=0$ for $t > p$ (see \cite[Eq. (5.7) and (5.8)]{BK} and \cite[Section 3.1]{Bu5}), and in a similar way we consider the modified Georgiev-type projection $\pi_{\mathcal{R}_{\alpha_s}}$ for $G_2^{(1)}$ when  $k_{\alpha_2}=3k$; cf. \cite[Section 3.3]{Bu3} and \cite[Section 3.1]{Bu5}.

For short simple roots $\alpha_s, \dots, \alpha_l$ we have the associated affine Lie subalgebra of  $\gtilde$ of type $A\sp{(1)}_{l-s+1}$ (see Figure \ref{figure}) and the restriction of a level $k$ standard  $\gtilde$-module to this subalgebra is a direct sum of level $k_{\alpha_s}$ standard  $A\sp{(1)}_{l-s+1}$-modules. In the final stage of our proof (essentially) only monomials of quasi-particles for colors $s, \dots, l$ appear, and by applying the restriction of standard $\gtilde$-modules to $A\sp{(1)}_{l-s+1}$ we use 
another Georgiev-type projections for short roots---for $\alpha_s$ essentially the mapping $\pi'_{\mathcal{R}_{\alpha_s}}$ from
$$
L(\Lambda_{j^k})_{r_s^{(1)}+r_s^{(2)}} \otimes \ldots  \otimes L(\Lambda_{j^1})_{r_s^{(2k-1)}+r_s^{(2k)}}
$$
to
\beq\label{E:another projection}
L(\Lambda_{j^k})\sp{A\sp{(1)}_{l-s+1}}_{r_s^{(1)}} \otimes L(\Lambda_{j^k})\sp{A\sp{(1)}_{l-s+1}}_{r_s^{(2)}} \otimes \ldots  \otimes L(\Lambda_{j^1})\sp{A\sp{(1)}_{l-s+1}}_{r_s^{(2k-1)}}\otimes L(\Lambda_{j^1})\sp{A\sp{(1)}_{l-s+1}}_{0}
\eeq
when $k_{\alpha_s}=2k$  (see \cite[Section 3.1]{Bu1} and \cite[Section 3.4]{Bu5}). Another Georgiev-type projection $\pi'_{\mathcal{R}_{\alpha_s}}$ for $G_2^{(1)}$ is obtained in a similar way by realizing level $3$ standard  $A\sp{(1)}_{1}$-modules within a tensor product of three level $1$  standard  $A\sp{(1)}_{1}$-modules; see \cite[Eq. (3.26) and (3.27)]{Bu3} and \cite[Section 3.4]{Bu5}.

The action of the group element $e_{\alpha_s\sp\vee}$ increases the weight by $\alpha_s$ on each tensor factor in \eqref{E:another projection}  and, in particular, it increases the weight by $\alpha_s$ on the last tensor factor. Therefore,  our key observation again holds:
$$
\pi'_{\mathcal{R}_{\alpha_s}}(e_{\mu'}\ts h'\ts b'\ts v_{L(\Lambda)} )=0
$$
when $\chgs b' <r_s$, so we can proceed as in \cite{Bu1,Bu2,Bu3,Bu5,BK}; see, e.g., \cite[Section 3.4]{Bu5}.
\end{proof}

\section{Parafermionic bases}\label{s:3}

\subsection{Vacuum space and \texorpdfstring{$\Zc$}{Z}-algebra projection}\label{ss:31}

Let $\Lambda$ be the highest weight of level $k$ as in \eqref{rect}.
Denote by 
$L(\Lambda)^{\hhat^+}$
 the {\em vacuum space of the standard module} 
 $L(\Lambda)$ , 
i.e.
\beq\label{vacuum_space}
L(\Lambda)^{\hhat^+} =\left\{v\in L(\Lambda)\,\big|\big.\,\, \hhat^+ \hspace{-1pt}\cdot\hspace{-1pt} v=0\right\}.
\eeq
By the Lepowsky--Wilson theorem \cite{LW2} we have the canonical isomorphism of $d$-graded linear spaces
\begin{align}
U(\hhat^-) \ot L(\Lambda)^{\hhat^+} \,&\xrightarrow{\hspace{5pt}\cong\hspace{5pt}}\, L(\Lambda)\label{LW}\\
h\ot u\,\,&\xmapsto{\hspace{17pt}}\,\, h\cdot u,\non
\end{align}
where
$$S(\hhat^-)\,\cong\, U(\hhat^-)\,\cong\, M(k)$$
is the Fock space of level $k$ for the Heisenberg Lie algebra $\s=\hhatm\oplus\hhatp\oplus\CC c$ with the action of $c$ as the multiplication by scalar $k$. We consider the projection
\beq\label{projection}
\proj\colon L(\Lambda) \to L(\Lambda)^{\hhat^+}
\eeq
given by the direct sum decomposition
$$
L(\Lambda)=L(\Lambda)^{\hhatp}\oplus\, \hhatm U(\hhatm)\hspace{-1pt}\cdot\hspace{-1pt} L(\Lambda)^{\hhat^+} .
$$
By \eqref{com4} we have the action of the Weyl group translations $e_{\al^\vee}$ on the vacuum space
\beq\label{weyl_action}
e_{\al^\vee}\colon L(\Lambda)^{\hhat^+}\to L(\Lambda)^{\hhat^+} .
\eeq

We recall Lepowsky--Wilson's construction of $\Zc$-operators which commute with the action of the Heisenberg subalgebra $\mathfrak{s}$   on the level $k$ standard module  $L(\Lambda)$ 
(\cite{LW1}, see also \cite{LP,G2}):
\beq\label{z_operator}
\Zc_\al (z)= E^-(\al,z)^{1/{k }} x_\al (z) E^+(\al,z)^{1/{k }}.
\eeq
We also need $\Zc$-operators for quasi-particles of higher charge
\beq\label{z_operator_higher}
\Zc_{n\al} (z)= E^-(\al,z)^{n/{k }} x_{n\al} (z) E^+(\al,z)^{n/{k }}
\eeq
and, even more general, for quasi-particle monomials  of charge type 
$\Rc'=(n_{r_l^{(1)},l},\ldots ,n_{1,1})$
\begin{align}
\Zc_{\Rc'}(z_{r_l^{(1)},l},\ldots ,z_{1,1})=&\,
E^-(\al_l,z_{r_l^{(1)},l})^{n_{r_l^{(1)},l}/{k }} \ldots  E^-(\al_1,z_{1,1})^{n_{1,1}/{k }}
x_{\Rc'}(z_{r_l^{(1)},l},\ldots ,z_{1,1})\non\\
&\times E^+(\al_l,z_{r_l^{(1)},l})^{n_{r_l^{(1)},l}/{k }} \ldots  E^+(\al_1,z_{1,1})^{n_{1,1}/{k }},
\label{z_operator_gen}
\end{align}
where
$$
x_{\Rc'}(z_{r_l^{(1)},l},\ldots ,z_{1,1})=x_{n_{r_l^{(1)},l}\al_l} (z_{r_l^{(1)},l}) \ldots
x_{n_{1,1}\al_1} (z_{1,1}).
$$
As usual, we write this formal Laurent series as
$$
\Zc_{\Rc'}(z_{r_l^{(1)},l},\ldots ,z_{1,1})=
\sum_{m_{r_l^{(1)},l},\ldots ,m_{1,1}\in\ZZ}
\Zc_{\Rc'}(m_{r_l^{(1)},l},\ldots ,m_{1,1}) 
z_{r_l^{(1)},l} ^{-m_{r_l^{(1)},l}-n_{r_l^{(1)},l}}\ldots z_{1,1}^{-m_{11}-n_{11}}
$$
and the coefficients act on the vacuum space
\beq\label{z_action}
\Zc_{\Rc'}(m_{r_l^{(1)},l},\ldots ,m_{1,1}) 
\colon
L(\Lambda)^{\hhat^+}\to L(\Lambda)^{\hhat^+}.
\eeq
Since we can "reverse" \eqref{z_operator_gen} and express monomials in quasi-particles in terms of the Laurent series 
$\Zc_{\Rc'}(z_{r_l^{(1)},l},\ldots ,z_{1,1})$, \eqref{z_action}
implies
$$
\proj \colon
x_{\Rc'}(z_{r_l^{(1)},l},\ldots ,z_{1,1}) v_{L(\Lambda)}
\mapsto \Zc_{\Rc'}(z_{r_l^{(1)},l},\ldots ,z_{1,1})v_{L(\Lambda)}.
$$
Now Theorem \ref{main} implies:

\begin{thm}\label{vacuum basis}
The set of vectors
$$
e_\mu \ts \Zc_{\Rc'}(m_{r_l^{(1)},l},\ldots ,m_{1,1})  v_{L(\Lambda)},
$$
such that $\mu\in Q^\vee$ and the charge-type $\Rc'$ and the energy-type
$(m_{r_l^{(1)},l},\ldots ,m_{1,1}) $
satisfy difference and initial conditions for 
$B'_{W_{L(\Lambda)}}$,
is a basis of the vacuum space 
$L(\Lambda)^{\hhat^+}$.
\end{thm}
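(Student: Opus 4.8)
The plan is to transport the basis $\BLcc$ of $\LL$ supplied by Theorem~\ref{main} onto the vacuum space, by factoring off the Heisenberg contribution through the Lepowsky--Wilson isomorphism \eqref{LW}. Throughout I would work in a fixed total degree and $\h$-weight, so that all the indexing sets involved are finite.

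I would begin by recording two structural inputs. By \eqref{com4} the translation $e_\mu$ commutes with $U(\hhatm)$ and, by \eqref{weyl_action}, it preserves the vacuum space; since each coefficient $\Zc_{\Rcp}(\mathcal{E})$ maps $L(\Lambda)^{\hhatp}$ into itself by \eqref{z_action} and $v_{L(\Lambda)}\in L(\Lambda)^{\hhatp}$, every vector $e_\mu\,\Zc_{\Rcp}(\mathcal{E})\,v_{L(\Lambda)}$ lies in the vacuum space. Next, expanding \eqref{z_operator_gen} and using $\hhatp\, v_{L(\Lambda)}=0$ --- so that the $E^+$-factors act trivially on the highest weight vector --- I would obtain an expansion of the form
\[
\Zc_{\Rcp}(\mathcal{E})\,v_{L(\Lambda)}
= x_{\Rcp}(\mathcal{E})\,v_{L(\Lambda)}
+ \sum_{\en\mathcal{E}''<\,\en\mathcal{E}} h'_{\mathcal{E},\mathcal{E}''}\,x_{\Rcp}(\mathcal{E}'')\,v_{L(\Lambda)},
\]
in which the charge-type $\Rcp$ is preserved, each coefficient $h'_{\mathcal{E},\mathcal{E}''}$ lies in the augmentation ideal $\hhatm U(\hhatm)$, and the quasi-particle energy strictly decreases in every correction term (the missing energy being absorbed into $h'_{\mathcal{E},\mathcal{E}''}$). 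The leading term is exactly $x_{\Rcp}(\mathcal{E})\,v_{L(\Lambda)}$ because the constant term of the product of $E^-$-factors equals $1$.

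From here I would argue that the family
\[
\left\{\, e_\mu\, h\, \Zc_{\Rcp}(\mathcal{E})\,v_{L(\Lambda)} \,\,\big|\big.\,\, \mu\in Q^\vee,\ h\in B_{U(\hhatm)},\ x_{\Rcp}(\mathcal{E})\in B'_{W_{L(\Lambda)}} \,\right\}
\]
is again a basis of $\LL$. Its indexing set is literally that of $\BLcc$; moreover, writing $e_\mu h = h e_\mu$ by \eqref{com4} and re-expanding each product $h\,h'_{\mathcal{E},\mathcal{E}''}$ in the basis $B_{U(\hhatm)}$ (where every resulting monomial has degree strictly larger than that of $h$), the displayed expansion exhibits the transition matrix from $\BLcc$ to this family as unitriangular once the indices are ordered by decreasing quasi-particle energy $\en\mathcal{E}$. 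Being unitriangular it is invertible, so Theorem~\ref{main} yields that the family is a basis of $\LL$. Finally, each of its members equals $h\cdot\big(e_\mu\,\Zc_{\Rcp}(\mathcal{E})\,v_{L(\Lambda)}\big)$ with the second factor in the vacuum space, so under the Lepowsky--Wilson isomorphism \eqref{LW} it corresponds to $h\ot\big(e_\mu\,\Zc_{\Rcp}(\mathcal{E})\,v_{L(\Lambda)}\big)$; thus $\{\,h\ot (e_\mu\,\Zc_{\Rcp}(\mathcal{E})\,v_{L(\Lambda)})\,\}$ is a basis of $U(\hhatm)\ot L(\Lambda)^{\hhatp}$. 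Since $B_{U(\hhatm)}$ is a basis of $U(\hhatm)$, the standard factorization property of tensor products then forces the second factors $e_\mu\,\Zc_{\Rcp}(\mathcal{E})\,v_{L(\Lambda)}$ to form a basis of $L(\Lambda)^{\hhatp}$, which is the assertion.

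I expect the main obstacle to be the unitriangularity claim of the third paragraph: it requires tracking the $d$-grading through the $E^\pm$-dressing carefully enough to guarantee that every correction term strictly lowers the quasi-particle energy while strictly raising the Heisenberg degree, so that the comparison with $\BLcc$ is genuinely triangular inside each finite graded-weight component. The remaining ingredients --- the commutation of $e_\mu$ with the Heisenberg subalgebra, the triviality of the $E^+$-factors on $v_{L(\Lambda)}$, and the factorization through \eqref{LW} --- are formal.
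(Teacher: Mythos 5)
Your first two paragraphs are sound, and so is the final descent through the Lepowsky--Wilson isomorphism \eqref{LW}. The gap is in the third paragraph. The expansion
\begin{equation*}
e_\mu\ts h\ts \Zc_{\Rc'}(\mathcal{E})\ts v_{L(\Lambda)}
= e_\mu\ts h\ts x_{\Rc'}(\mathcal{E})\ts v_{L(\Lambda)}
+\sum_{\en\mathcal{E}''<\,\en\mathcal{E}} e_\mu\ts \hat h_{\mathcal{E}''}\ts x_{\Rc'}(\mathcal{E}'')\ts v_{L(\Lambda)}
\end{equation*}
does \emph{not} exhibit a transition matrix from $\BLcc$ to your family, because the correction terms are in general not elements of $\BLcc$: although the charge-type $\Rc'$ is preserved, the shifted energy-types $\mathcal{E}''$ need not satisfy the difference and initial conditions defining $B'_{W_{L(\Lambda)}}$. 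To obtain a genuine unitriangular matrix you would have to straighten each correction term back into the basis via the relations used in Lemma \ref{lemsec22} (the relations of \cite[Lemmas 1 and 2]{Bu4} together with \eqref{star}) and prove that this rewriting never produces basis vectors of quasi-particle energy $\geqslant \en\mathcal{E}$. That monotonicity is plausible (the quasi-particle relations preserve total charge and total energy, and \eqref{star} only transfers energy out of the quasi-particle part into $h$ and $e_\mu$), but it is exactly the missing ingredient, and it is not the obstacle you flag at the end: tracking the $d$-grading through the $E^{\pm}$-dressing is the easy half; the hard half is controlling the straightening of the off-basis correction terms.

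The paper's proof sidesteps this entirely by running the argument in the opposite direction. It applies the projection $\proj$ to the basis $\BLcc$: since $e_\mu$ commutes with $U(\hhatm)$ and $bv_{L(\Lambda)}$ decomposes under \eqref{LW}, the projection annihilates every $e_\mu h b v_{L(\Lambda)}$ with $h\neq 1$ and sends $e_\mu b v_{L(\Lambda)}$ to $e_\mu \Zc_{\Rc'}(\mathcal{E}) v_{L(\Lambda)}$, so the asserted set spans $L(\Lambda)^{\hhat^+}$. Linear independence then follows from \eqref{LW} by a dimension count in each weight space: the vectors $h\cdot\bigl(e_\mu \proj\hspace{-2pt}\cdot b\ts v_{L(\Lambda)}\bigr)$ of weight $\nu$ span $L(\Lambda)_\nu$ and are exactly $\dim L(\Lambda)_\nu$ in number. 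If you wish to keep your architecture, the cleanest repair is to replace the unitriangularity claim by this cardinality-versus-dimension comparison, which requires no ordering of the index set at all.
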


\begin{proof}
The images of elements of the basis  $\BLcc$ with respect to the projection $\pi=\proj$,  form  a spanning set for the vacuum space $L(\Lambda)^{\hhat^+}$. Furthermore, as the projection annihilates all elements $e_\mu h b v_{L(\Lambda)}\in \BLcc$ with $h\neq 0$,    the vectors $e_\mu  \pi( b )v_{L(\Lambda)}$ span the vacuum space. Let $\nu $ be an arbitrary weight. The theorem now follows from \eqref{LW} by comparing the dimensions of the subspace  spanned by all vectors $h\cdot (e_\mu  \pi( b )v_{L(\Lambda)})$ of    weight $\nu$  and   the subspace spanned by all vectors $   e_\mu  h b  v_{L(\Lambda)} $ of    weight $\nu$.
\end{proof}

\subsection{Parafermionic space and parafermionic projection}\label{ss:32}

For $ADE$ type untwisted affine Lie algebras Georgiev in \cite{G2} uses lattice vertex operator construction $V_P$ of standard level $1$ modules, where
$$
V_P=M(1)\ot\CC[P]\fand\CC[P]=\spn\left\{e^\mu\,\left|\right.\, \mu\in P\right\}.
$$
Then for a level $k$ standard module  $L(\Lambda)$  he uses the embedding
\beq\label{embedding}
 L(\Lambda) \,\subset \, V_P^{ \ot k}=V_P\ot \ldots \ot V_P  .
\eeq
This construction gives a diagonal action of the sublattice $kQ\subset Q$ on $V_P^{\ot k}$:
\begin{align*}
k\al \mapsto \rho (k\al)=e^\al\ot\ldots \ot e^\al,\quad \al\in Q,\qquad\text{such that}\qquad
\rho(k\al)\colon L(\Lambda)_\nu^{\hhat^+} \to L(\Lambda)_{\nu+k\al}^{\hhat^+}.
\end{align*}
Georgiev defines the {\em parafermionic space of highest weight} $\Lambda$ as the space of $kQ$-coinvariants in the $kQ$-module  $L(\Lambda)^{\hhat^+}$:
\beq\label{parafermion}
L(\Lambda)_{kQ}^{\hhat^+}=L(\Lambda)^{\hhat^+} / 
\spn\left\{(\rho(k\al) -1)\cdot v\,\,\big|\big.\,\, \al\in Q,\, v\in  L(\Lambda)^{\hhat^+}\right\}
\eeq
with the canonical projection on the quotient space
$$
\pi_{kQ}^{\hhat^+}\colon L(\Lambda)^{\hhat^+}\to L(\Lambda)_{kQ}^{\hhat^+}.
$$
Note that in this case we have
$$
L(\Lambda)_{kQ}^{\hhat^+}
\cong\coprod_{\mu\in\Lambda+Q/kQ}L(\Lambda)_{\mu}^{\hhat^+} .
$$

In the non-simply laced case we do not have a lattice vertex operator construction at hand nor operators $\rho(k\al)$, so we have to alter Georgiev's construction: the map $\al^{\vee}\mapsto e_{\al^{\vee}}$ extends to a projective representation of $Q^{\vee}$ on the vacuum space  $L(\Lambda)^{\hhat^+}$
and the action of $e_{\alpha_i^\vee}$ gives isomorphisms of the $\h$-weight subspaces
$$
e_{\alpha_i^\vee}\colon L(\Lambda)^{\hhat^+}_\nu \to L(\Lambda)^{\hhat^+}_{\nu+k_{\al_i}\al_i}.
$$
Denote by
$$
Q(k)=\coprod_{i=1}^l \ZZ k_{\al_i} \al_i \subset Q.
$$
Note that in the simply laced case we have $Q(k)=kQ$. 
In the non-simply laced case, we define the {\em parafermionic space of highest weight} $\Lambda$ as
\beq\label{parafermion2}
L(\Lambda)_{Q(k)}^{\hhat^+}=
\coprod_{\substack{0\leqslant m_1\leqslant k_{\al_1} -1\vspace{-6pt} \\ \vdots\\ 0\leqslant m_l\leqslant k_{\al_l} -1}}
L(\Lambda)_{\Lambda +m_1\al_1+\ldots +m_l\al_l}^{\hhat^+}.
\eeq
For an $\mathfrak h$-weight $\mu=(\Lambda +m_1\al_1+\ldots +m_l\al_l)\vert_\mathfrak h$ there is a unique $e_{\al_1^\vee}^{p_1}\ldots e_{\al_l^\vee}^{p_l}$ such that
$$
e_{\al_1^\vee}^{p_1}\ldots e_{\al_l^\vee}^{p_l}\colon
L(\Lambda)_{\mu}^{\hhat^+}
\xrightarrow{\hspace{5pt}\cong\hspace{5pt}}
L(\Lambda)_{\Lambda +(m_1+p_1 k_{\al_1})\al_1+\ldots +(m_l+p_l k_{\al_l})\al_l}^{\hhat^+}
\subset L(\Lambda)_{Q(k)}^{\hhat^+}.
$$
Since $ L(\Lambda)^{\hhat^+}$ is a direct  sum of  $\mathfrak h$-weight subspaces $L(\Lambda)_{\mu}^{\hhat^+}$, the above maps define our parafermion projection in the non-simply laced case:
$$
\pi_{Q(k)}^{\hhat^+}\colon L(\Lambda)^{\hhat^+}\to L(\Lambda)_{Q(k)}^{\hhat^+}.
$$
Along with this definition of the parafermionic space and the corresponding projection,
we keep in mind isomorphisms
$$
e_{\al_1^\vee}^{p_1}\ldots e_{\al_l^\vee}^{p_l}\colon
L(\Lambda)_{\Lambda +m_1\al_1+\ldots +m_l\al_l}^{\hhat^+}
\xrightarrow{\hspace{5pt}\cong\hspace{5pt}}
L(\Lambda)_{\Lambda +(m_1+p_1 k_{\al_1})\al_1+\ldots +(m_l+p_l k_{\al_l})\al_l}^{\hhat^+}
$$
which allow us to identify the $\h$-weight subspaces 
$L(\Lambda)_{\Lambda +\mu}^{\hhat^+}$ and
$L(\Lambda)_{\Lambda +\mu'}^{\hhat^+}$
with $\h$-weights $\mu$ and $\mu'$ in the same class $\mu+Q(k)\in Q/Q(k)$.
\bigskip

We define {\em parafermionic current} as in \cite{G2}:
\beq\label{currents}
\Psi_\al (z) = \Zc_\al (z) z^{-\al  /k }, \qquad \Psi_\al (z) = \sum_{m\in\frac{1}{k_\al}+\ZZ} \psi_\al (m) z^{-m-1},
\eeq
and the {\em parafermionic currents of charge} $n$:
$$
\Psi_{n\al} (z) = \Zc_{n\al} (z) z^{-n\al  /k }, \qquad \Psi_{n\al} (z) = \sum_{m\in\frac{n}{k_\al}+\ZZ} \psi_{n\al} (m) z^{-m-n}.
$$
For monomials of quasi-particles of charge type $\Rc'=(n_{r_l^{(1)},l},\ldots ,n_{1,1})$  we define the corresponding $\Psi$-operators
\begin{gather*}
\Psi_{\Rc'} (z_{r_l^{(1)}\, l},\ldots , z_{1,1})=
\Zc_{\Rc'}(z_{r_l^{(1)}\, l},\ldots , z_{1,1})
z_{r_l^{(1)}\, l}^{-n_{r_l^{(1)}} \al_l  /k   }  \ldots  z_{1,1}^{-n_{1,1} \al_1  / k },
\\
\Psi_{\Rc'}(z_{r_l^{(1)},l},\ldots ,z_{1,1})=
\sum_{
m_{r_l^{(1)},l},\ldots ,m_{1,1}}
\psi_{\Rc'}(m_{r_l^{(1)},l},\ldots ,m_{1,1}) 
z_{r_l^{(1)},l} ^{-m_{r_l^{(1)},l}-n_{r_l^{(1)},l}}\ldots z_{1,1}^{-m_{11}-n_{11}},
\end{gather*}
where the summation in the second equality is over all sequences $(m_{r_l^{(1)},l},\ldots ,m_{1,1})$ such that $m_{i,r}\in \frac{n_{i,r}}{k_{\alpha_r}}+\ZZ$.
It is clear that $\Psi$-operators commute with the action of the Heisenberg subalgebra $\mathfrak{s}$.

The following lemma reveals the relation between the coefficients of $\mathcal Z$-operators and the coefficients of $\Psi$-operators (cf.  \cite[Eq. (2.14)]{G2}):
\begin{lem}
For any simple root $\beta$, $m\in\ZZ$ and $\mu\in P$ we have
\beq\label{214}
\mathcal Z_{\beta} (m) \bigg|\bigg._{L (\Lambda)^{\hhat^+}_\mu}=
\psi_{\beta}(m+\left<\beta,\mu\right>\hspace{-2pt}/k)\bigg|\bigg._{L (\Lambda)^{\hhat^+}_\mu}.
\eeq
\end{lem}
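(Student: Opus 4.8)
The plan is to read off identity \eqref{214} by comparing, coefficient by coefficient, the two mode expansions of the parafermionic current $\Psi_\beta(z)$ arising from its definition $\Psi_\beta(z)=\Zc_\beta(z)\ts z^{-\beta/k}$ in \eqref{currents}. The only input beyond formal bookkeeping is the explicit action of the operator $z^{-\beta/k}$ on the weight-graded vacuum space: by construction it is diagonal with respect to the $\h$-grading and acts on the weight subspace $L(\Lambda)^{\hhat^+}_\mu$ as multiplication by the scalar power $z^{-\langle\beta,\mu\rangle/k}$. Everything else is the interplay of this scalar with the integer-powered series $\Zc_\beta(z)=\sum_{r\in\ZZ}\Zc_\beta(r)z^{-r-1}$ (note that $\Zc_\beta(z)$ itself has only integer powers of $z$, since the $E^{\pm}(\beta,z)^{1/k}$ factors contribute integer powers).

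First I would restrict the defining relation to a fixed weight subspace $L(\Lambda)^{\hhat^+}_\mu$. Since $z^{-\beta/k}$ acts there as the scalar $z^{-\langle\beta,\mu\rangle/k}$, and a scalar power of $z$ commutes with the formal series $\Zc_\beta(z)$, on $L(\Lambda)^{\hhat^+}_\mu$ we obtain
\beq\non
\Psi_\beta(z)\big|_{L(\Lambda)^{\hhat^+}_\mu}
= z^{-\langle\beta,\mu\rangle/k}\,\Zc_\beta(z)\big|_{L(\Lambda)^{\hhat^+}_\mu}.
\eeq
Substituting the expansions of $\Zc_\beta(z)$ and of $\Psi_\beta(z)=\sum_{m'}\psi_\beta(m')z^{-m'-1}$ and collecting powers of $z$, the factor $z^{-\langle\beta,\mu\rangle/k}$ turns the exponent $-r-1$ into $-r-1-\langle\beta,\mu\rangle/k$. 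Hence the coefficient of $z^{-r-1-\langle\beta,\mu\rangle/k}$ is $\Zc_\beta(r)\big|_{L(\Lambda)^{\hhat^+}_\mu}$ on the right and $\psi_\beta(r+\langle\beta,\mu\rangle/k)\big|_{L(\Lambda)^{\hhat^+}_\mu}$ on the left; equating these and renaming $r=m$ gives exactly \eqref{214}.

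The one point requiring care, and the place I expect the (modest) main obstacle to lie, is the bookkeeping of the fractional mode index: I must verify that $m+\langle\beta,\mu\rangle/k$ is an admissible argument of $\psi_\beta$, i.e. that the powers of $z$ produced by $z^{-\langle\beta,\mu\rangle/k}$ on $L(\Lambda)^{\hhat^+}_\mu$ fall in the coset carrying the fractional modes of $\Psi_\beta$. This is where the module structure enters: regarding $L(\Lambda)$ as an integrable module of level $k_\beta$ for $\widetilde{\sll}_2(\beta)$, the eigenvalue $\mu(\beta^\vee)$ is an integer, so $\langle\beta,\mu\rangle/k=\mu(\beta^\vee)/k_\beta$ lies in $\tfrac1{k_\beta}\ZZ$ and the exponents of $\Psi_\beta(z)\big|_{L(\Lambda)^{\hhat^+}_\mu}$ form a single coset of $\ZZ$ inside $\tfrac1{k_\beta}\ZZ$, consistent with the fractional nature of the modes $\psi_\beta(\cdot)$ recorded in \eqref{currents}. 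On weight subspaces that vanish both sides are the zero operator and the identity holds trivially, so no separate argument is needed there; the remainder is the purely formal comparison of coefficients carried out above.
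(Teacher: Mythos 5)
Your proposal is correct and is essentially the paper's own proof: both restrict the defining relation \eqref{currents} to the weight subspace $L(\Lambda)^{\hhat^+}_\mu$, use that $z^{\pm\beta/k}$ acts there as the scalar $z^{\pm\langle\beta,\mu\rangle/k}$, and compare coefficients of $z^{-m-1}$. Your extra check that $m+\langle\beta,\mu\rangle/k$ lands in the correct coset $\tfrac{1}{k_\beta}\ZZ$ is a harmless refinement the paper leaves implicit.
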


\begin{proof}
By applying \eqref{currents} and then restricting to the $\mu$-weight subspace $L (\Lambda)^{\hhat^+}_\mu$ we find
\begin{align}
\mathcal Z_{\beta} (z) \bigg|\bigg._{L (\Lambda)^{\hhat^+}_\mu}&=
\Psi_{\beta} (z)z^{\beta  /k } \bigg|\bigg._{L (\Lambda)^{\hhat^+}_\mu}=
z^{\left<\beta,\mu\right>/k}\Psi_{\beta} (z) \bigg|\bigg._{L (\Lambda)^{\hhat^+}_\mu}\label{214a},
\end{align}
so by taking the coefficients of $z^{-m-1}$ in \eqref{214a} we obtain \eqref{214}, as required.
\end{proof}

The following two lemmas reveal the relation between different $\Psi$-operators defined above. We have the following lemma (cf. \cite{G2}):
\begin{lem}\label{lemma1a}
For a simple root $\beta$ and a positive integer $n$
\beq\label{lemma1aeq}
\Psi_{n\beta} (z)=
\left(
\prod_{1\leqslant p<s\leqslant n} \left(1-\frac{z_p}{z_s}\right)^{ \left<\beta,\beta\right>/k}
z_s^{ \left<\beta,\beta\right>/k}
\right)
\Psi_\beta (z_n)\ldots  \Psi_\beta (z_1)\Bigg|_{z_n=\ldots =z_1 =z} \Bigg. .
\eeq
\end{lem}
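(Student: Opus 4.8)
The plan is to establish \eqref{lemma1aeq} by reordering the product $\Psi_\beta(z_n)\cdots\Psi_\beta(z_1)$ into a single normally ordered operator, keeping track of the scalar formal factors produced along the way, and then checking that the prefactor in \eqref{lemma1aeq} cancels exactly those factors. Writing $\Psi_\beta(z_i)=\Zc_\beta(z_i)\,z_i^{-\beta/k}$ as in \eqref{currents} and $\Zc_\beta$ as in \eqref{z_operator}, I would first reorder the $\Zc$-part and afterwards account for the twists $z_i^{-\beta/k}$. The point is that, once all $E^-(\beta,z_i)^{1/k}$ are moved to the left, all $x_\beta(z_i)$ to the middle, and all $E^+(\beta,z_i)^{1/k}$ to the right, setting $z_n=\cdots=z_1=z$ turns the normally ordered operator into $E^-(\beta,z)^{n/k}x_\beta(z)^n E^+(\beta,z)^{n/k}=\Zc_{n\beta}(z)$, by \eqref{vopquasi} and \eqref{z_operator_higher}.

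The reordering of $\Zc_\beta(z_n)\cdots\Zc_\beta(z_1)$ rests on three elementary rules, valid as identities of formal series expanded in nonnegative powers of $z_p/z_s$ for $p<s$: (i) moving $E^+(\beta,z_s)^{1/k}$ to the right past $x_\beta(z_p)$ produces $(1-z_p/z_s)^{-\langle\beta,\beta\rangle/k}$, by \eqref{ex-relation} taken with $\alpha=\beta$ and $h$ the image of $\beta$ in $\h$, for which the exponent $\alpha(h)$ equals $\langle\beta,\beta\rangle$; (ii) moving $E^-(\beta,z_p)^{1/k}$ to the left past $x_\beta(z_s)$ produces $(1-z_p/z_s)^{-\langle\beta,\beta\rangle/k}$, by \eqref{ex-relation1}; and (iii) moving $E^+(\beta,z_s)^{1/k}$ to the right past $E^-(\beta,z_p)^{1/k}$ produces $(1-z_p/z_s)^{\langle\beta,\beta\rangle/k}$, which follows from $[\beta(m),\beta(-m)]=\langle\beta,\beta\rangle\,m\,k$ on the level $k$ module via Baker--Campbell--Hausdorff, the commutator of the two exponents being the central scalar $\frac{\langle\beta,\beta\rangle}{k}\log(1-z_p/z_s)$. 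The operators $x_\beta(z_i)$ commute pairwise (the root $\beta$ is isotropic, so there is no singular term), and the $E^\pm$ of equal sign commute. Pushing every $E^+$ to the far right, each $E^+(\beta,z_i)^{1/k}$ crosses exactly one $E^-$ and one $x_\beta$ for every $j<i$, and the contributions of (iii) and (i) cancel; pushing every $E^-$ to the far left, each $E^-(\beta,z_j)^{1/k}$ crosses $x_\beta(z_i)$ for every $i>j$, contributing $(1-z_j/z_i)^{-\langle\beta,\beta\rangle/k}$ by (ii). Hence $\Zc_\beta(z_n)\cdots\Zc_\beta(z_1)$ equals $\prod_{1\leqslant p<s\leqslant n}(1-z_p/z_s)^{-\langle\beta,\beta\rangle/k}$ times the normally ordered operator.

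It remains to push the twists $z_i^{-\beta/k}$ to the right. Since $z_i^{-\beta/k}$ acts on $L(\Lambda)^{\hhat^+}_\mu$ as the scalar $z_i^{-\langle\beta,\mu\rangle/k}$ while $\Zc_\beta(z_j)$ raises the $\h$-weight by $\beta$, one has $z_i^{-\beta/k}\Zc_\beta(z_j)=z_i^{-\langle\beta,\beta\rangle/k}\Zc_\beta(z_j)\,z_i^{-\beta/k}$, so moving $z_i^{-\beta/k}$ past the $i-1$ operators $\Zc_\beta(z_{i-1}),\ldots,\Zc_\beta(z_1)$ yields $z_i^{-(i-1)\langle\beta,\beta\rangle/k}$ and gathers the twists into $z_n^{-\beta/k}\cdots z_1^{-\beta/k}$. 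Combining the two computations,
\[
\Psi_\beta(z_n)\cdots\Psi_\beta(z_1)=\prod_{1\leqslant p<s\leqslant n}(1-z_p/z_s)^{-\langle\beta,\beta\rangle/k}\prod_{s=1}^{n} z_s^{-(s-1)\langle\beta,\beta\rangle/k}\,\mathcal{N}\, z_n^{-\beta/k}\cdots z_1^{-\beta/k},
\]
where $\mathcal{N}$ denotes the normally ordered operator. Multiplying by the prefactor $\prod_{1\leqslant p<s\leqslant n}(1-z_p/z_s)^{\langle\beta,\beta\rangle/k}z_s^{\langle\beta,\beta\rangle/k}$ of \eqref{lemma1aeq}, the $(1-z_p/z_s)$ powers cancel at once, and since $z_s$ occurs for exactly the $s-1$ indices $p<s$, the powers $\prod_s z_s^{(s-1)\langle\beta,\beta\rangle/k}$ cancel $\prod_s z_s^{-(s-1)\langle\beta,\beta\rangle/k}$ as well. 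What remains, $\mathcal{N}\, z_n^{-\beta/k}\cdots z_1^{-\beta/k}$, is regular at $z_n=\cdots=z_1=z$, and specializing there gives $\Zc_{n\beta}(z)\,z^{-n\beta/k}=\Psi_{n\beta}(z)$, which is \eqref{lemma1aeq}.

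The crux is the bookkeeping of the three exchange exponents and their signs---in particular verifying $\alpha(h)=\langle\beta,\beta\rangle$ for $h$ the image of $\beta$ in $\h$, and that the $E^+E^-$ reordering in (iii) carries the opposite sign to (i) and (ii), so that the $E^+$-passages cancel and only the $E^-$-past-$x_\beta$ passages survive. The other delicate point is that all identities must be read with a single formal-expansion convention (nonnegative powers of $z_p/z_s$ for $p<s$); with that convention the cancellation against the prefactor takes place \emph{before} the specialization $z_i\to z$, so that the apparently singular factors never have to be evaluated at coinciding points.
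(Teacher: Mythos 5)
Your proof is correct and follows essentially the same route as the paper: both arguments normal-order the product of parafermionic currents using the exchange relations \eqref{ex-relation}, \eqref{ex-relation1} and \eqref{powerofz} and then specialize $z_n=\cdots=z_1=z$, the only difference being one of execution --- the paper proceeds by induction on $n$ and sidesteps the $E^+E^-$ exchange entirely by using that $\Psi_\beta(z_2)$ commutes with the Heisenberg subalgebra (so $E^-(\beta/k,z_1)$ passes to the left for free), whereas you compute that exchange explicitly via Baker--Campbell--Hausdorff and observe that it cancels against the $E^+$-past-$x_\beta$ contribution, both bookkeepings yielding the same single factor $(1-z_p/z_s)^{-\langle\beta,\beta\rangle/k}$ per pair. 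One small slip: the justification that the $x_\beta(z_i)$ commute should not be that ``the root $\beta$ is isotropic'' (a simple root has $\langle\beta,\beta\rangle\neq 0$), but that $[x_\beta,x_\beta]=0$ and $\langle x_\beta,x_\beta\rangle=0$; this aside does not affect the argument, since the specialization only needs $x_\beta(z_n)\cdots x_\beta(z_1)\vert_{z_i=z}=x_\beta(z)^n$.
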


\begin{proof}
By using \eqref{z_operator} and \eqref{currents} one can express the parafermionic current as
\beq\label{para-relation}
\Psi_\beta(z) =E^{-}(\beta/k,z)  x_\beta (z) E^{+}(\beta/k,z)  z^{-\beta/k}.
\eeq
By combining   identities  \eqref{ex-relation} and \eqref{para-relation} and the aforementioned fact that the parafermionic currents   commute  with the action of the Heisenberg subalgebra   we find
\begin{align*}
\Psi_\beta (z_2)\Psi_\beta (z_1)=\,&
\Psi_\beta (z_2) E^{-}(\beta/k,z_1)  x_\beta (z_1) E^{+}(\beta/k,z_1)  z_1^{-\beta/k}\\
 =\,&  E^{-}(\beta/k,z_1)  \Psi_\beta (z_2) x_\beta (z_1) E^{+}(\beta/k,z_1)  z_1^{-\beta/k}\\
 =\,&  E^{-}(\beta/k,z_1)  E^{-}(\beta/k,z_2)    x_\beta (z_2) E^{+}(\beta/k,z_2) z_2^{-\beta/k}  x_\beta (z_1) E^{+}(\beta/k,z_1)  z_1^{-\beta/k}\\
=\,&  \left(1-z_1/z_2\right)^{- \left<\beta,\beta\right>/k} E^{-}(\beta/k,z_1)  E^{-}(\beta/k,z_2)    x_\beta (z_2)    x_\beta (z_1) \\
\,&\times z_2^{- \left<\beta,\beta\right>/k}E^{+}(\beta/k,z_2)  E^{+}(\beta/k,z_1) z_2^{-\beta/k} z_1^{-\beta/k},
\end{align*}
where in the last equality we also used \eqref{powerofz}. The statement of the lemma for $n=2$ now follows by multiplying the equality by $\left(1-z_2/z_1\right)^{ \left<\beta,\beta\right>/k} z_1^{\left<\beta,\beta\right>/k}$ and then applying the substitution $z_1=z_2=z$. The $n>2$ case is verified by induction on $n$.
\end{proof}

For given simple roots $\beta_r,\ldots ,\beta_1$ and the corresponding sequence of charges $n_r,\ldots ,n_1$ set
\begin{align*}
\Psi_{n_r\beta_r,\ldots ,n_1\beta_1} (z_r,\ldots ,z_1)=
&\prod_{s=1}^r  E^- (n_s \beta_s /k, z_s)\,
x_{n_r\beta_r} (z_r)\ldots x_{n_1\beta_1} (z_1)\\
&\times\prod_{s=1}^r  E^+ (n_s \beta_s /k, z_s)\,
\prod_{s=1}^r   z_s^{-n_s\beta_s /k}.
\end{align*}
Analogously to Lemma \ref{lemma1a} we can show: 

\begin{lem}\label{lemma1b}
For any simple roots $\beta_r,\ldots ,\beta_1$ and charges  $n_r,\ldots ,n_1$ we have
\begin{align}\label{lem1b1}
\Psi_{n_r\beta_r,\ldots ,n_1\beta_1} (z_r,\ldots ,z_1)=& 
\left(\prod_{1\leqslant p<s\leqslant r} \left(1-\frac{z_p}{z_s}\right)^{\left<n_l\beta_s,n_p\beta_p\right>/k}    z_s^{\left<n_s\beta_s,n_p\beta_p\right>/k}\right)\\
&\times \Psi_{n_r\beta_r} (z_r)\ldots \Psi_{n_1\beta_1} (z_1).\non
\end{align}
Moreover, we have
\begin{align}\label{lem1b2}
\Psi_{n_r\beta_r,\ldots ,n_1\beta_1} (z_r,\ldots ,z_1)=&
\left(\prod_{ p=1}^{r-1} \left(1-\frac{z_p}{z_r}\right)^{\left<n_r\beta_r,n_p\beta_p\right>/k}    z_r^{\left<n_r\beta_r,n_p\beta_p\right>/k}\right)\\
&\times \Psi_{n_r\beta_r} (z_r)\Psi_{n_{r-1}\beta_{r-1},\ldots ,n_1\beta_1} (z_{r-1},\ldots ,z_1).\non
\end{align}
\end{lem}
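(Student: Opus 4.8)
The plan is to establish the one-step recursion \eqref{lem1b2} by a direct normal-ordering computation, and then to deduce the closed product formula \eqref{lem1b1} by iterating it. The basic tool is the higher-charge analogue of \eqref{para-relation}, namely $\Psi_{n\beta}(z)=E^{-}(n\beta/k,z)\,x_{n\beta}(z)\,E^{+}(n\beta/k,z)\,z^{-n\beta/k}$, which follows from \eqref{z_operator_higher} and the definition \eqref{currents} of $\Psi_{n\beta}(z)$ in exactly the same way that \eqref{para-relation} follows from \eqref{z_operator}; here one uses that $E^{\pm}(\beta,z)^{n/k}=E^{\pm}(n\beta/k,z)$, since the exponent defining $E^{\pm}$ is linear in its first argument.

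To prove \eqref{lem1b2} I would start from the product $\Psi_{n_r\beta_r}(z_r)\,\Psi_{n_{r-1}\beta_{r-1},\ldots ,n_1\beta_1}(z_{r-1},\ldots ,z_1)$ and push all factors into the order prescribed by the definition of $\Psi_{n_r\beta_r,\ldots ,n_1\beta_1}$. Since the parafermionic current $\Psi_{n_r\beta_r}(z_r)$ commutes with the Heisenberg subalgebra $\mathfrak{s}$, and hence with every $E^{\pm}$ operator, I first move all factors $E^{-}(n_s\beta_s/k,z_s)$ with $s<r$ occurring in the second operator to the far left past $\Psi_{n_r\beta_r}(z_r)$, exactly as in the proof of Lemma \ref{lemma1a}. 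After expanding $\Psi_{n_r\beta_r}(z_r)$ by the higher-charge relation above, the only remaining obstruction to the desired normal order is the block $E^{+}(n_r\beta_r/k,z_r)\,z_r^{-n_r\beta_r/k}$ standing to the left of $x_{n_{r-1}\beta_{r-1}}(z_{r-1})\ldots x_{n_1\beta_1}(z_1)$.

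Next I commute this block to the right across each $x_{n_s\beta_s}(z_s)$. Writing $x_{n_s\beta_s}(z_s)=x_{\beta_s}(z_s)^{n_s}$ and applying the exchange relation \eqref{ex-relation} to each of the $n_s$ factors moves $E^{+}(n_r\beta_r/k,z_r)$ past $x_{n_s\beta_s}(z_s)$ at the cost of $(1-z_s/z_r)^{-\langle n_s\beta_s,n_r\beta_r\rangle/k}$, because each factor contributes the exponent $-\beta_s(n_r\beta_r/k)$ and $n_s\,\beta_s(n_r\beta_r/k)=\langle n_s\beta_s,n_r\beta_r\rangle/k$. The weight operator $z_r^{-n_r\beta_r/k}$, being a function of the Cartan zero modes, passes $x_{n_s\beta_s}(z_s)$ at the cost of $z_r^{-\langle n_r\beta_r,n_s\beta_s\rangle/k}$, since $x_{n_s\beta_s}(z_s)$ raises the $\mathfrak{h}$-weight by $n_s\beta_s$; this is the analogue of \eqref{powerofz}. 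Collecting these scalars for $s=r-1,\ldots ,1$, and using that all $E^{-}$ (respectively all $E^{+}$) operators mutually commute and that the zero-mode operator $z_r^{-n_r\beta_r/k}$ commutes with every $E^{+}(n_s\beta_s/k,z_s)$, the four resulting blocks recombine precisely into $\Psi_{n_r\beta_r,\ldots ,n_1\beta_1}(z_r,\ldots ,z_1)$. Transferring the accumulated scalar to the other side and using the symmetry of $\langle\cdot,\cdot\rangle$ then yields \eqref{lem1b2} with $p=s$.

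Finally, \eqref{lem1b1} follows by induction on $r$: one application of \eqref{lem1b2} peels off $\Psi_{n_r\beta_r}(z_r)$ together with the scalar $\prod_{p<r}(1-z_p/z_r)^{\langle n_r\beta_r,n_p\beta_p\rangle/k}z_r^{\langle n_r\beta_r,n_p\beta_p\rangle/k}$, and since these scalars commute with the remaining $\Psi$-operators, repeating the step on $\Psi_{n_{r-1}\beta_{r-1},\ldots ,n_1\beta_1}$ accumulates exactly the product over all pairs $1\leqslant p<s\leqslant r$. The main difficulty is purely the bookkeeping—keeping the two sources of scalars, the $(1-z_p/z_s)$ factors from \eqref{ex-relation} and the monomial $z_s$ factors from the weight operator, correctly indexed and signed. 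Routing the $E^{-}$ factors to the left through the commutativity of $\Psi_{n_r\beta_r}(z_r)$ with $\mathfrak{s}$, rather than computing $[E^{-},E^{+}]$ directly, is precisely what keeps this manageable.
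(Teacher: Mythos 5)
Your proof is correct and follows essentially the same route as the paper: expand $\Psi_{n_r\beta_r}(z_r)$ in its normal-ordered form, slide the $E^-$ factors of the second operator to the left using the commutativity of parafermionic currents with the Heisenberg subalgebra, push $E^{+}(n_r\beta_r/k,z_r)$ and $z_r^{-n_r\beta_r/k}$ to the right across the quasi-particle generating functions to collect exactly the factors $\prod_{p=1}^{r-1}(1-z_p/z_r)^{-\langle n_r\beta_r,n_p\beta_p\rangle/k}z_r^{-\langle n_r\beta_r,n_p\beta_p\rangle/k}$, and then obtain \eqref{lem1b1} by iterating \eqref{lem1b2}. The only (immaterial) difference is that you treat $E^{+}(n_r\beta_r/k,z_r)\,z_r^{-n_r\beta_r/k}$ as a single block and spell out the charge-$n_s$ case of \eqref{ex-relation} factor by factor, whereas the paper moves the two pieces in separate steps.
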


\begin{proof}
It is clear that \eqref{lem1b1} follows by  successive applications of equality \eqref{lem1b2} so let us prove \eqref{lem1b2}. As with the proof of Lemma \ref{lemma1a}, the following calculation relies on identities  \eqref{ex-relation}, \eqref{powerofz} and \eqref{para-relation} and the  fact that the parafermionic currents   commute  with the action of the Heisenberg subalgebra. We have
\allowdisplaybreaks
\begin{align*}
&\,\Psi_{n_r\beta_r} (z_r)\ts \Psi_{n_{r-1}\beta_{r-1},\ldots ,n_1\beta_1} (z_{r-1},\ldots ,z_1)\\
=\,& E^- (n_r \beta_r /k, z_r) \, x_{n_r \beta_r} (z_r)\,E^+ (n_r \beta_r /k, z_r) \,z_r^{-n_r\beta_r /k}\\
&\times\prod_{s=1}^{r-1}  E^- (n_s \beta_s /k, z_s)\,
x_{n_{r-1}\beta_{r-1}} (z_{r-1})\ldots x_{n_1\beta_1} (z_1)\,
\prod_{s=1}^{r-1}  E^+ (n_s \beta_s /k, z_s)\,
\prod_{s=1}^{r-1}   z_s^{-n_s\beta_s /k}\\
=\,& \prod_{s=1}^{r }  E^- (n_s \beta_s /k, z_s)  \,
x_{n_r \beta_r} (z_r)\,
E^+ (n_r \beta_r /k, z_r)\,
 z_r^{-n_r\beta_r /k}\\
&\times
x_{n_{r-1}\beta_{r-1}} (z_{r-1})\ldots x_{n_1\beta_1} (z_1)\,
\prod_{s=1}^{r-1}  E^+ (n_s \beta_s /k, z_s)\,
\prod_{s=1}^{r-1}   z_s^{-n_s\beta_s /k}\\
=\,& 
 \prod_{p=1}^{r-1}     z_r^{-\left<n_r\beta_r,n_p\beta_p\right>/k} \,
\prod_{s=1}^{r }  E^- (n_s \beta_s /k, z_s)   \,
x_{n_r \beta_r} (z_r)\,
E^+ (n_r \beta_r /k, z_r) 
\\
&\times
x_{n_{r-1}\beta_{r-1}} (z_{r-1})\ldots x_{n_1\beta_1} (z_1)\,
\prod_{s=1}^{r-1}  E^+ (n_s \beta_s /k, z_s)\,
\prod_{s=1}^{r }   z_s^{-n_s\beta_s /k}\\
=\,& 
\left(\prod_{p=1}^{r-1}  \left(1-\frac{z_p}{z_r}\right)^{-\left<n_r\beta_r,n_p\beta_p\right>/k}      z_r^{-\left<n_r\beta_r,n_p\beta_p\right>/k}\right)\\
&\times
\prod_{s=1}^{r }  E^- (n_s \beta_s /k, z_s)   \,
x_{n_{r }\beta_{r }} (z_{r })\ldots x_{n_1\beta_1} (z_1)\,
\prod_{s=1}^{r }  E^+ (n_s \beta_s /k, z_s)\,
\prod_{s=1}^{r }   z_s^{-n_s\beta_s /k}\\
=\,& 
\left(\prod_{p=1}^{r-1}  \left(1-\frac{z_p}{z_r}\right)^{-\left<n_r\beta_r,n_p\beta_p\right>/k}      z_r^{-\left<n_r\beta_r,n_p\beta_p\right>/k}\right)
\Psi_{n_{r }\beta_{r },\ldots ,n_1\beta_1} (z_{r },\ldots ,z_1),
\end{align*}  
so  equality \eqref{lem1b2} now follows.
\end{proof}

\begin{lem}\label{reflemma}
The equalities
$$\Psi_\beta (z) e_{\al^\vee} = e_{\al^\vee}\Psi_\beta (z)\Fand
\Psi_{\Rc'} (z_{r_l^{(1)}\, l},\ldots , z_{1,1}) e_{\al^\vee}=
 e_{\al^\vee}\Psi_{\Rc'} (z_{r_l^{(1)}\, l},\ldots , z_{1,1}).
$$
hold for operators on $L(\Lambda)$ or $L(\Lambda)^{\hhat^+}$.
\end{lem}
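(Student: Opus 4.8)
The plan is to establish the equivalent assertion that conjugation by $e_{\al^\vee}$ fixes each parafermionic operator, e.g. $e_{\al^\vee}\Psi_\beta(z)e_{\al^\vee}^{-1}=\Psi_\beta(z)$. I would start from the factorization \eqref{para-relation},
\[
\Psi_\beta(z)=E^-(\beta/k,z)\,x_\beta(z)\,E^+(\beta/k,z)\,z^{-\beta/k},
\]
and conjugate it one factor at a time, inserting $e_{\al^\vee}^{-1}e_{\al^\vee}$ between consecutive factors. The operators $E^\pm(\beta/k,z)$ are power series in the elements $h(\pm n)$ with $n\geqslant 1$, hence commute with $e_{\al^\vee}$ by \eqref{com4}. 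For the vertex operator, relation \eqref{com5} gives $e_{\al^\vee}x_\beta(m)e_{\al^\vee}^{-1}=x_\beta(m-\beta(\al^\vee))$; multiplying by $z^{-m-1}$ and reindexing the sum yields
\[
e_{\al^\vee}\,x_\beta(z)\,e_{\al^\vee}^{-1}=z^{-\beta(\al^\vee)}x_\beta(z),
\]
where $z^{-\beta(\al^\vee)}$ is an integral power of the formal variable and thus a central scalar.

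The one delicate factor, which I expect to be the crux, is the weight operator $z^{-\beta/k}$. First I would record how $e_{\al^\vee}$ moves weights: using \eqref{com3} together with $c=k$ on $L(\Lambda)$ one finds $e_{\al^\vee}^{-1}h\,e_{\al^\vee}=h+k\left<\al^\vee,h\right>$ for $h\in\h$, so that $e_{\al^\vee}$ maps $L(\Lambda)^{\hhat^+}_\mu$ onto $L(\Lambda)^{\hhat^+}_{\mu+k_\al\al}$. Recalling from \eqref{214a} that $z^{-\beta/k}$ acts on $L(\Lambda)^{\hhat^+}_\mu$ as the scalar $z^{-\left<\beta,\mu\right>/k}$, evaluating $e_{\al^\vee}z^{-\beta/k}e_{\al^\vee}^{-1}$ on a weight-$\mu$ vector produces the scalar $z^{-\left<\beta,\mu-k_\al\al\right>/k}$. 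The key numerical identity $k_\al\left<\beta,\al\right>/k=\beta(\al^\vee)$ then gives
\[
e_{\al^\vee}\,z^{-\beta/k}\,e_{\al^\vee}^{-1}=z^{\beta(\al^\vee)}z^{-\beta/k},
\]
so the shift coming from the weight translation is exactly opposite to the one coming from $x_\beta(z)$.

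Assembling the four conjugated factors, the scalars $z^{-\beta(\al^\vee)}$ and $z^{\beta(\al^\vee)}$ cancel while the $E^\pm$ factors are unchanged, giving $e_{\al^\vee}\Psi_\beta(z)e_{\al^\vee}^{-1}=\Psi_\beta(z)$, which is the first asserted identity. For the general operator $\Psi_{\Rc'}$ I would run the identical argument on its definition \eqref{z_operator_gen} (after rewriting $E^\pm(\al_i,z)^{n/k}=E^\pm(n\al_i/k,z)$): each vertex factor $x_{n_{p,i}\al_i}(z_{p,i})$ contributes $z_{p,i}^{-n_{p,i}\al_i(\al^\vee)}$ under conjugation, while the matching formal power $z_{p,i}^{-n_{p,i}\al_i/k}$ contributes the reciprocal $z_{p,i}^{\,n_{p,i}\al_i(\al^\vee)}$, so the two cancel variable by variable, and the $E^\pm$ factors again pass through by \eqref{com4}. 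Finally, since $e_{\al^\vee}$ commutes with $\hhat^+$ by \eqref{com4}, it preserves the vacuum space $L(\Lambda)^{\hhat^+}$, so both identities hold on $L(\Lambda)$ and on $L(\Lambda)^{\hhat^+}$, as claimed.
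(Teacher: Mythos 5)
Your proof is correct and follows essentially the same route as the paper's: both arguments move $e_{\al^\vee}$ through the factorization of $\Psi_\beta(z)$ using \eqref{com4} for the $E^\pm$ factors, \eqref{com5} for $x_\beta(z)$ (producing $z^{-\beta(\al^\vee)}$), and \eqref{com3} for the weight operator $z^{-\beta/k}$ (producing the cancelling $z^{\beta(\al^\vee)}$). The only cosmetic difference is that you phrase the last step via the action on $\h$-weight subspaces while the paper uses the operator identity $z^h e_{\al^\vee}=e_{\al^\vee}z^{h+\left<\al^\vee,h\right>c}$ directly; these are the same computation.
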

\begin{proof}
Since \eqref{com5} can be written as
$$
x_\beta (z) e_{\al^\vee} = z^{\beta (\al^\vee)}e_{\al^\vee}x_\beta (z)
$$
and \eqref{com3}
$$
z^h e_{\al^\vee} =e_{\al^\vee} z^{h+\left<\al^\vee ,h\right> c} ,
$$
we have
\begin{align*}
\Psi_\beta (z) e_{\al^\vee} =&\, \Zc_\beta (z) z^{-\beta  /k } e_{\al^\vee} = 
\Zc_\beta (z) e_{\al^\vee} z^{-\beta  /k } z^{- \beta(\alpha^\vee) } \\
=& \,
e_{\al^\vee}  \Zc_\beta (z) z^{\beta(\al^\vee)} z^{-\beta  /k } z^{-\beta(\al^\vee)}
= e_{\al^\vee}  \Psi_\beta (z).
\end{align*}
The proof of the second statement is similar.
\end{proof}

Since the coefficients $\psi_\beta (m)$ commute with all $e_{\al^\vee}$, 
in the simply laced case
the subspace
$$\spn\left\{(\rho(k\al) -1)\cdot v\,\,\big|\big.\,\, \al\in Q,\, v\in  L(\Lambda)^{\hhat^+}\right\}$$ is invariant for $\psi_\beta (m)$ and we have the induced operator
$$
\overline{\psi_\beta (m)}\colon L(\Lambda)^{\hhat^+}_{kQ}\to L(\Lambda)^{\hhat^+}_{kQ}
$$
on the quotient \eqref{parafermion}.

In the non-simply laced cases we only have a projective representation $\varphi\mapsto e_\varphi$ of $Q^\vee$ on a standard module or its vacuum space, i.e.
$$
e_\varphi e_{\varphi'}= c(\varphi,\varphi')e_{\varphi +\varphi'}
$$
for some non-zero constant $c(\varphi,\varphi')$, but we can still define the induced operators $\overline{\psi_\beta (m)}$ on the parafermionic space \eqref{parafermion2} as
$$
\overline{\psi_\beta (m)}=\pi_{Q(k)}^{\hhat^+}\circ \psi_\beta (m)
\colon
\coprod_{\mu}
L(\Lambda)_{\mu}^{\hhat^+} \to
\coprod_{\mu}
L(\Lambda)_{\mu}^{\hhat^+},
$$
where, as in \eqref{parafermion2}, both direct sums go over all weights $\mu$ of the form
$\mu=\Lambda + m_1\alpha_1+\ldots +m_l\alpha_l$ such that
$0\leqslant m_1\leqslant k_{\alpha_1} -1,\ldots , 0\leqslant m_l\leqslant k_{\alpha_l} -1$.
That is, we apply first
$$
 \psi_\beta (m)  \colon
L(\Lambda)_{\mu}^{\hhat^+} \to
L(\Lambda)_{\mu +\beta}^{\hhat^+},
$$
and then compose it with the projection
$$
\overline{\psi_\beta (m)} =\projqk \circ \psi_\beta (m).
$$
In other words, if $\mu +\beta$ is not the chosen class in \eqref{parafermion2}, but rather
$\nu\in\mu+\beta+Q(k)$,
then we identify with the unique $e_\varphi$ two subspaces
$$
e_\varphi \colon L(\Lambda)_{\mu +\beta}^{\hhat^+} \to L(\Lambda)_{\nu}^{\hhat^+},
$$
i.e.
$$
\overline{\psi_\beta (m)} =e_\varphi \circ   \psi_\beta (m) .
$$
A drawback of this construction is that for a composition of
$\overline{\psi_{\beta'} (m')}$ and $\overline{\psi_\beta (m)}$ we may need two identifications by $e_{\varphi'}$ and $e_{\varphi}$ and for
$\overline{\psi_{\beta} (m) \psi_{\beta'} (m')}$ we need only one identification by $e_{\varphi +\varphi'}$ so that we get
$$
\overline{\psi_{\beta} (m)} \circ \overline{\psi_{\beta'} (m')}=
c(\varphi,\varphi')\overline{\psi_{\beta} (m) \circ\psi_{\beta'} (m')}.
$$
Having all this in mind, we omit the "overline" from our notation and consider the induced operators 
$$
\psi_\beta (m)\colon L(\Lambda)^{\hhat^+}_{Q(k)}\to L(\Lambda)^{\hhat^+}_{Q(k)}
$$
on the parafermionic space. With such convention we shall also write
$$
\psi_{\Rc'}(m_{r_l^{(1)},l},\ldots ,m_{1,1}) \colon L(\Lambda)^{\hhat^+}_{Q(k)}\to L(\Lambda)^{\hhat^+}_{Q(k)}.
$$
Now Theorem \ref{vacuum basis} implies:

\begin{thm}\label{parafermionbasis}
For any highest weight $\Lambda$ as in \eqref{rect} the set of vectors
\begin{align*}
&\projqk \Zc_{\Rc'}(m_{r_l^{(1)},l},\ldots ,m_{1,1})  v_{L(\Lambda)}\\
&\qquad =
\psi_{\Rc'}(m_{r_l^{(1)},l}+\langle n_{r_l^{(1)},l}\alpha_l,\Lambda\rangle/k,\ldots ,m_{1,1}+\langle n_{1,1}\alpha_1,\Lambda\rangle/k)  v_{L(\Lambda)},
\end{align*}
such that    the charge-type $\Rc'$ and the energy-type
$(m_{r_l^{(1)},l},\ldots ,m_{1,1}) $
satisfy difference and initial conditions for $B'_{W_{L(\Lambda)}}$,
is a basis of the parafermionic space $L(\Lambda)^{\hhat^+}_{Q(k)}$.
\end{thm}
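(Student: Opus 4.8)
The plan is to transport the vacuum-space basis of Theorem~\ref{vacuum basis} across the parafermionic projection $\projqk$ and then to control everything by a weight count. Since the vectors $e_\mu\ts\Zc_{\Rcp}(m_{r_l^{(1)},l},\ldots ,m_{1,1})\ts v_{L(\Lambda)}$ with $\mu\in Q^\vee$ and $(\Rcp,\Ec)$ satisfying the $B'_{W_{L(\Lambda)}}$-conditions form a basis of $L(\Lambda)^{\hhat^+}$ and $\projqk$ is surjective, their images span $L(\Lambda)^{\hhat^+}_{Q(k)}$. The first task is to identify these images. The weight of $\Zc_{\Rcp}(\Ec)\ts v_{L(\Lambda)}$ differs from the weight of $e_\mu\ts\Zc_{\Rcp}(\Ec)\ts v_{L(\Lambda)}$ by an element of $Q(k)$, so both lie in the same $Q(k)$-coset and have the same representative in \eqref{parafermion2}. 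By the very definition of $\projqk$ and by Lemma~\ref{reflemma} (the operators $e_{\al^\vee}$ commute with $\Zc_{\Rcp}$), the image $\projqk(e_\mu\ts\Zc_{\Rcp}(\Ec)\ts v_{L(\Lambda)})$ is therefore a nonzero scalar multiple, independent of $\mu$, of $\projqk(\Zc_{\Rcp}(\Ec)\ts v_{L(\Lambda)})$, the scalar coming from the projective representation $\varphi\mapsto e_\varphi$. Applying the multivariable analogue of \eqref{214} (together with Lemma~\ref{lemma1b}) to the monomial $\Zc_{\Rcp}(\Ec)$ evaluated on the weight-$\Lambda$ vector then yields exactly the asserted equality $\projqk\ts\Zc_{\Rcp}(\Ec)\ts v_{L(\Lambda)}=\psi_{\Rcp}(\Ec')\ts v_{L(\Lambda)}$ with the energy shifts $\langle n_{p,i}\al_i,\Lambda\rangle/k$. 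Thus the spanning set reduces to $\{\psi_{\Rcp}(\Ec')\ts v_{L(\Lambda)}\}$.

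For linear independence I would use decomposition \eqref{parafermion2}, in which each summand $L(\Lambda)^{\hhat^+}_{\Lambda+m_1\al_1+\cdots +m_l\al_l}$ with $0\leqslant m_i\leqslant k_{\al_i}-1$ is literally a weight subspace of the vacuum space, on which $\projqk$ acts as the identity. Fix such a representative weight $\nu=\Lambda+\sum_i m_i\al_i$. A vector $\psi_{\Rcp}(\Ec')\ts v_{L(\Lambda)}$ lies in this summand precisely when $\chgi b\equiv m_i\pmod{k_{\al_i}}$ for every $i$, where $b$ is the underlying quasi-particle monomial. On the other hand, the $\h$-weight of $e_\mu\ts\Zc_{\Rcp}(\Ec)\ts v_{L(\Lambda)}$ equals $\Lambda+\sum_i(\chgi b+c_i k_{\al_i})\al_i$ for $\mu=\sum_i c_i\al_i^\vee$, so among the vacuum-space basis the vectors of $\h$-weight exactly $\nu$ are those with $\chgi b\equiv m_i\pmod{k_{\al_i}}$ and with $\mu$ uniquely determined by $c_i=(m_i-\chgi b)/k_{\al_i}$. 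This sets up a bijection between the vacuum-space basis vectors of weight $\nu$ and the indices $(\Rcp,\Ec)$ contributing $\psi$-vectors to this summand.

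For the unique $\mu$ attached to a given $(\Rcp,\Ec)$ the vector $e_\mu\ts\Zc_{\Rcp}(\Ec)\ts v_{L(\Lambda)}$ already has weight $\nu$, hence $\projqk$ fixes it; combined with the first paragraph this shows that $\psi_{\Rcp}(\Ec')\ts v_{L(\Lambda)}$ is, up to a nonzero scalar, exactly this single vacuum-space basis vector. Therefore, inside each summand of \eqref{parafermion2}, the proposed parafermionic vectors are, up to nonzero scalars, precisely a basis of $L(\Lambda)^{\hhat^+}_\nu$ (each vacuum basis vector of weight $\nu$ is hit exactly once, by the bijection), and in particular linearly independent. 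Summing over all representatives $\nu$ yields a basis of $L(\Lambda)^{\hhat^+}_{Q(k)}$, as claimed.

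The main obstacle is the careful treatment of the projective representation $\varphi\mapsto e_\varphi$ in the non-simply-laced case: one must verify that the scalars produced when commuting $e_\mu$ through $\Zc_{\Rcp}$ and through $\projqk$ are nonzero, and that the identification maps $e_\varphi$ composing $\projqk$ are exactly the ones realizing $\projqk$ as the identity on each chosen representative subspace, so that ``$\projqk$ fixes a weight-$\nu$ vector'' is literally correct. One must also establish the multivariable version of \eqref{214}, namely that commuting the factors $E^\pm(\cdot,z)$ and the fractional powers $z^{-n\al/k}$ through and evaluating on $v_{L(\Lambda)}$ produces precisely the stated shifts; this is a direct generalization of the one-variable computation proving \eqref{214}. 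Once these bookkeeping points are settled, the weight count makes the basis property immediate.
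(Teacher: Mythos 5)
Your argument is correct and is exactly the (implicit) argument of the paper: Theorem~\ref{parafermionbasis} is stated there as an immediate consequence of Theorem~\ref{vacuum basis}, obtained by noting that $\projqk$ restricts to the identity on each weight subspace $L(\Lambda)^{\hhat^+}_{\Lambda+\sum m_i\al_i}$ appearing in \eqref{parafermion2}, that by Lemma~\ref{reflemma} the operators $e_{\al^\vee}$ commute with the $\Zc$-operators so the projection collapses the $Q^\vee$-orbit of each $\Zc_{\Rcp}(\Ec)v_{L(\Lambda)}$ onto a single nonzero multiple of one vacuum-space basis vector, and that the passage from $\Zc_{\Rcp}(\Ec)$ to $\psi_{\Rcp}(\Ec')$ on $v_{L(\Lambda)}$ produces the stated energy shifts $\langle n_{p,i}\al_i,\Lambda\rangle/k$ exactly as in \eqref{214}. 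Your weight-by-weight bijection spells out the details the paper leaves to the reader; no gaps.
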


\subsection{Parafermionic character formulas}\label{ss:33}

The results of C. Dong and J. Lepowsky in \cite{DL} on $\mathcal Z$-algebras and parafermionic algebras for affine Lie algebras of ADE-type have been extended in \cite{Li}; in particular it is proved that the vacuum space $\Omega_V$ of a Heisenberg algebra in a general vertex operator algebra $V$ has a natural generalized vertex algebra structure and that the vacuum space  $\Omega_W$ of a $V$-module $W$ has a natural $\Omega_V$-module structure $(\Omega_W, Y_\Omega)$ (see Theorem 3.10 in \cite{Li}). In our case the so-called parafermionic grading operator $L_\Omega(0)=\rez_z zY(\omega_\Omega,z)$, defined by (3.35) in  \cite{Li}
$$
Y_\Omega(\omega_\Omega,z)=\sum_{n\in\mathbb Z}L_\Omega(n)z\sp{-n-2},\qquad
\omega_\Omega=\omega -\omega_{\bf h},
$$  
is  the difference between the grading operators of the vertex operator algebras $L(k\Lambda_0)$ and $M(k)$, as in \cite[Chapter 14]{DL} and \cite{G2}. By using the commutator formula for vertex algebras (cf.  (3.1.9) in \cite{LL}) and Proposition 3.8 and Theorem 6.4 in \cite{Li} we get
\beq\label{M3.3.1}
\left[L_\Omega(0),x_\beta (m)\right]= \textstyle\left(-m-\frac{1}{k_\beta}\right)x_\beta (m)\quad\text{for} \quad \beta\in R,\, m\in\ZZ.
\eeq
Since $L_\Omega(0)$ commutes with the action of Heisenberg subalgebra $\mathfrak s$, the assumption $L_\Omega(0) v=\lambda v$ for $v\in L(\Lambda)^{\hhat^+}$ implies
$$
L_\Omega(0)\, \proj \cdot x_\beta (m)v= \textstyle\left(-m-\frac{1}{k_\beta}+\lambda\right)\, \proj \cdot x_\beta (m)v.
$$
From here we see that the conformal 
 energy of $\psi_\beta (m)$ equals 
\beq\label{M3.3.2}
\en \psi_\beta (m)= -m- \frac{1}{k_\beta},\quad \text{i.e.}\quad 
[L_\Omega(0),\psi_\beta (m)]=\left(-m- \frac{1}{k_\beta}\right)\psi_\beta (m).
\eeq

\begin{lem}
For a simple root $\beta$ and a charge  $n$  we have on $L(\Lambda)^{\hhat^+}$
\begin{align}
\en\psi_{n\beta} (m)&=-m-\frac{n^2}{k_\beta}.\label{paraen1}
\end{align}
Moreover, for simple roots $\beta_r,\ldots ,\beta_1$ and charges  $n_r,\ldots ,n_1$ we have
\beq\label{paraen2}
\en\psi_{n_r\beta_r,\ldots ,n_1\beta_1}(m_{r},\ldots ,m_{1}) =
-\sum_{i=1}^r \left( m_i +\frac{n_i^2}{k_{\beta_i}}
+\frac{1}{k}\textstyle\left<n_i\beta_i ,\sum_{s=1}^{i-1}n_s\beta_s\right>\right) .
\eeq
\end{lem}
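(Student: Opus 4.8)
The plan is to regard $L_\Omega(0)$ as a derivation (via the commutator) on products of the fields $\Psi$ and to convert each commutator $[L_\Omega(0),\Psi]$ into a first-order differential operator in the formal variables; this way the combinatorial prefactors appearing in Lemmas \ref{lemma1a} and \ref{lemma1b} contribute only through their total homogeneity degree. The input is the charge-one relation \eqref{M3.3.2}. Since $\Psi_\beta(z)=\sum_m\psi_\beta(m)z^{-m-1}$, writing $-m-\tfrac1{k_\beta}=(-m-1)+(1-\tfrac1{k_\beta})$ and recognizing $-m-1$ as the eigenvalue of the Euler operator $z\,d/dz$ on the monomial $z^{-m-1}$ shows that \eqref{M3.3.2} is equivalent to
$$[L_\Omega(0),\Psi_\beta(z)]=\left(z\tfrac{d}{dz}+1-\tfrac1{k_\beta}\right)\Psi_\beta(z).$$

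To prove \eqref{paraen1} I would apply $L_\Omega(0)$ to the right-hand side of \eqref{lemma1aeq}. Writing $\mathcal D_i=z_i\,\partial_{z_i}$ and using the Leibniz rule across the product $\Psi_\beta(z_n)\cdots\Psi_\beta(z_1)$, each factor contributes $(\mathcal D_i+1-\tfrac1{k_\beta})$, so the product is acted on by $\sum_{i=1}^n\mathcal D_i+n(1-\tfrac1{k_\beta})$. The scalar prefactor $P=\prod_{1\leqslant p<s\leqslant n}(1-z_p/z_s)^{\langle\beta,\beta\rangle/k}z_s^{\langle\beta,\beta\rangle/k}$ commutes with $L_\Omega(0)$; since each $(1-z_p/z_s)$ is homogeneous of degree $0$ and the factors $z_s^{\langle\beta,\beta\rangle/k}$ contribute total degree $\binom{n}{2}\langle\beta,\beta\rangle/k$, commuting $\sum_i\mathcal D_i$ past $P$ releases the constant $-\binom{n}{2}\langle\beta,\beta\rangle/k$. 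After the diagonal specialization $z_1=\cdots=z_n=z$ the total Euler operator restricts to $z\,d/dz$ on $\Psi_{n\beta}(z)$, and using $\langle\beta,\beta\rangle/k=2/k_\beta$ the remaining constant collapses to $n-\tfrac{n^2}{k_\beta}$. Comparing coefficients of $z^{-m-n}$ then gives \eqref{paraen1}.

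For the general formula \eqref{paraen2} I would induct on $r$ using the recursion \eqref{lem1b2}, the base case $r=1$ being \eqref{paraen1}. Phrased for generating series, the inductive hypothesis is
$$[L_\Omega(0),\Psi_{n_{r-1}\beta_{r-1},\ldots,n_1\beta_1}]=\Big(\textstyle\sum_{i=1}^{r-1}\mathcal D_i+\sum_{i=1}^{r-1}(n_i-c_i)\Big)\Psi_{n_{r-1}\beta_{r-1},\ldots,n_1\beta_1},$$
where $c_i=\tfrac{n_i^2}{k_{\beta_i}}+\tfrac1k\langle n_i\beta_i,\sum_{s=1}^{i-1}n_s\beta_s\rangle$. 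Applying $L_\Omega(0)$ by the Leibniz rule to the product $\Psi_{n_r\beta_r}(z_r)\,\Psi_{n_{r-1}\beta_{r-1},\ldots,n_1\beta_1}$ and combining the charge-$n_r$ case with the hypothesis produces $\sum_{i=1}^r\mathcal D_i+(n_r-\tfrac{n_r^2}{k_{\beta_r}})+\sum_{i=1}^{r-1}(n_i-c_i)$. The prefactor $Q=\prod_{p=1}^{r-1}(1-z_p/z_r)^{\langle n_r\beta_r,n_p\beta_p\rangle/k}z_r^{\langle n_r\beta_r,n_p\beta_p\rangle/k}$ is homogeneous of degree $\tfrac1k\langle n_r\beta_r,\sum_{p=1}^{r-1}n_p\beta_p\rangle$, which is exactly $c_r-\tfrac{n_r^2}{k_{\beta_r}}$; hence commuting $\sum_i\mathcal D_i$ past $Q$ turns $n_r-\tfrac{n_r^2}{k_{\beta_r}}$ into $n_r-c_r$ and closes the induction. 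Reading off the coefficient of $\prod_i z_i^{-m_i-n_i}$ yields \eqref{paraen2}.

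The main obstacle is bookkeeping rather than conceptual: one must track carefully which variables each $\mathcal D_i$ differentiates, confirm that the homogeneity shift produced when commuting $\sum_i\mathcal D_i$ through $P$ and $Q$ reproduces precisely the cross terms $\tfrac1k\langle n_i\beta_i,\sum_{s=1}^{i-1}n_s\beta_s\rangle$, and handle the diagonal substitution in \eqref{lemma1aeq} correctly, using that $\sum_i\mathcal D_i$ restricts to $z\,d/dz$ along the diagonal. All manipulations are legitimate on $L(\Lambda)^{\hhat^+}$ because the prefactors are scalar series in the $z$-variables and, as recorded before the lemma, $L_\Omega(0)$ and the $\Psi$-operators are well-defined there and commute with the Heisenberg subalgebra $\mathfrak s$.
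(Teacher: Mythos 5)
Your proof is correct and takes essentially the same route as the paper: both arguments rest on \eqref{M3.3.2} together with the product formulas \eqref{lemma1aeq} and \eqref{lem1b1}--\eqref{lem1b2}, attributing $-m_i-\frac{1}{k_\beta}$ (resp.\ $-m_i-\frac{n_i^2}{k_{\beta_i}}$) to each current factor and a shift of $-\left<\cdot,\cdot\right>/k$ to each fractional power $z_s^{\left<\cdot,\cdot\right>/k}$ in the scalar prefactor. Your Euler-operator bookkeeping simply makes precise the paper's informal statement that each such factor ``decreases the conformal energy,'' and your induction via \eqref{lem1b2} in place of a direct reading of \eqref{lem1b1} is a cosmetic difference.
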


\begin{proof}
Consider the right hand side of \eqref{lemma1aeq}. 
The contribution  of the coefficient of the parafermionic current $\Psi_\beta (z_i)$
with $i=1,\ldots ,n$
  to the  conformal energy of $\psi_{n\beta} (m)$ equals $-m_i-\frac{1}{k_\beta}$, where $m_1+\ldots +m_n=m$. Moreover, each term $z_s^{\left<\beta,\beta\right>}\hspace{-2pt}/k$   decreases the conformal energy by $\left<\beta,\beta\right>\hspace{-2pt}/k$.
As the right hand side of \eqref{lemma1aeq}
 contains $ n(n-1)/2 $ such terms and $k_\beta \left<\beta,\beta\right>=2k$, the conformal energy of  $\psi_{n\beta} (m)$ is found by
$$
\en\psi_{n\beta} (m)=
-\sum_{i=1}^n\left(m_i+\frac{1}{k_\beta}\right) - \frac{n(n-1)}{2}\cdot\frac{\left<\beta,\beta\right>}{k}
=-m-\frac{n^2}{k_\beta}.
$$
We generalize \eqref{paraen1}  to an arbitrary
$
\psi_{n_r\beta_r,\ldots ,n_1\beta_1}(m_{r},\ldots ,m_{1})$.
Consider the right hand side of \eqref{lem1b1}. By \eqref{paraen1} we conclude that 
the contribution  of the coefficient of the parafermionic current $\Psi_{n_i\beta_i} (z_i)$
with $i=1,\ldots ,n$
  to the total conformal energy  equals $-m_i-\frac{n_i^2}{k_{\beta_i}}$. Since each term $z_s^{\left<n_s\beta_s,n_p\beta_p\right>/k}$ decreases the conformal energy by $\left<n_s\beta_s,n_p\beta_p\right>\hspace{-2pt}/k$, formula  (\ref{paraen2}) follows.
\end{proof}

\begin{lem}
On $L(\Lambda)^{\hhat^+}$ we have
$$
\left[L_\Omega(0), e_{\beta\sp\vee}\right]=0 \quad\text{for} \quad \beta\in R.
$$
\end{lem}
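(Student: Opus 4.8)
The plan is to use the description of the parafermionic grading operator recorded just above as a difference $L_\Omega(0)=L(0)-L_{\mathbf h}(0)$, where $L(0)$ is the grading operator of the vertex operator algebra $L(k\Lambda_0)$ and $L_{\mathbf h}(0)$ is the grading operator of the Heisenberg vertex operator algebra $M(k)$. Since $e_{\beta^\vee}$ preserves the vacuum space by \eqref{weyl_action}, it suffices to show that conjugation by $e_{\beta^\vee}$ transforms $L(0)$ and $L_{\mathbf h}(0)$ in \emph{exactly the same way}, so that the two transformation laws cancel in the difference. Thus I would compute $e_{\beta^\vee}L(0)e_{\beta^\vee}^{-1}$ and $e_{\beta^\vee}L_{\mathbf h}(0)e_{\beta^\vee}^{-1}$ separately and compare.

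First I would treat $L(0)$. Because $[L(0),x(m)]=-m\,x(m)=-[d,x(m)]$ for every $x\in\g$ and $L(0)+d$ also commutes with $c$, the operator $L(0)+d$ is central on the irreducible module $L(\Lambda)$ and so acts as a scalar; hence $L(0)=-d+\text{const}$. Conjugating by $e_{\beta^\vee}$ and invoking \eqref{com2} then gives
\[
e_{\beta^\vee}L(0)e_{\beta^\vee}^{-1}=L(0)-\beta^\vee(0)+\tfrac12\langle\beta^\vee,\beta^\vee\rangle c,
\]
where $c$ acts as the level $k$ and $\beta^\vee(0)$ denotes the Cartan zero mode.

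Next I would treat $L_{\mathbf h}(0)$. On the $\h$-weight subspace $L(\Lambda)^{\hhat^+}_\mu$ the Heisenberg grading operator acts as the scalar $\tfrac1{2k}\langle\mu,\mu\rangle$, the conformal weight of the corresponding lowest-weight vector for $\s$. By \eqref{com3} the operator $e_{\beta^\vee}$ sends $L(\Lambda)^{\hhat^+}_\mu$ to $L(\Lambda)^{\hhat^+}_{\mu+k\beta^\vee}$, so a direct comparison of $L_{\mathbf h}(0)e_{\beta^\vee}v$ with $e_{\beta^\vee}L_{\mathbf h}(0)v$ on $v\in L(\Lambda)^{\hhat^+}_\mu$ produces precisely the same conjugation law
\[
e_{\beta^\vee}L_{\mathbf h}(0)e_{\beta^\vee}^{-1}=L_{\mathbf h}(0)-\beta^\vee(0)+\tfrac12\langle\beta^\vee,\beta^\vee\rangle c.
\]
Alternatively one can conjugate the explicit Sugawara expression $L_{\mathbf h}(0)=\tfrac1{2k}\sum_i\bigl(u^i(0)u_i(0)+2\sum_{m\geq1}u^i(-m)u_i(m)\bigr)$ for dual bases $\{u_i\},\{u^i\}$ of $\h$, using \eqref{com3}, \eqref{com4} together with the identities $\sum_i\langle\beta^\vee,u_i\rangle u^i=\beta^\vee$ and $\sum_i\langle\beta^\vee,u^i\rangle\langle\beta^\vee,u_i\rangle=\langle\beta^\vee,\beta^\vee\rangle$. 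Subtracting the two displayed identities, the linear shift $-\beta^\vee(0)$ and the quadratic scalar $\tfrac12\langle\beta^\vee,\beta^\vee\rangle c$ both cancel, giving $e_{\beta^\vee}L_\Omega(0)e_{\beta^\vee}^{-1}=L_\Omega(0)$, that is $[L_\Omega(0),e_{\beta^\vee}]=0$ on $L(\Lambda)^{\hhat^+}$.

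I expect the only real obstacle to be the careful matching of the two "anomaly" corrections, especially the quadratic term $\tfrac{k}{2}\langle\beta^\vee,\beta^\vee\rangle$: one coming from the adjoint action of $e_{\beta^\vee}$ on $d$ in \eqref{com2}, the other from the shift of the Heisenberg zero modes in \eqref{com3}. The bookkeeping is delicate because $\beta^\vee(0)$ does not commute with $e_{\beta^\vee}$, so one must keep track of whether $\beta^\vee(0)$ is evaluated on $\mu$ or on the shifted weight $\mu+k\beta^\vee$, and verify that the factor $k$ arising from $c$ yields the identical scalar on both sides. Once this coincidence is checked, the cancellation in $L(0)-L_{\mathbf h}(0)$ is immediate, and the same argument applies verbatim to an arbitrary root $\beta\in R$ since \eqref{com2}--\eqref{com4} hold for all roots.
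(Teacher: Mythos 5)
Your proof is correct, but it takes a genuinely different route from the paper's. You conjugate the two constituents of $L_\Omega(0)=L(0)-L_{\mathbf h}(0)$ separately: using $L(0)=-d+\mathrm{const}$ together with \eqref{com2} for the affine grading, and the scalar action $\tfrac1{2k}\langle\mu,\mu\rangle$ of the Heisenberg grading on $L(\Lambda)^{\hhat^+}_\mu$ together with \eqref{com3} for the other piece; both acquire the same anomaly $-\beta^\vee(0)+\tfrac12\langle\beta^\vee,\beta^\vee\rangle c$, which cancels in the difference (I checked the bookkeeping you flagged: on a vector of weight $\mu+k\beta^\vee$ one has $-\beta^\vee(0)+\tfrac{k}{2}\langle\beta^\vee,\beta^\vee\rangle=-\langle\mu,\beta^\vee\rangle-\tfrac{k}{2}\langle\beta^\vee,\beta^\vee\rangle$, which is exactly $\tfrac1{2k}\bigl(\langle\mu,\mu\rangle-\langle\mu+k\beta^\vee,\mu+k\beta^\vee\rangle\bigr)$, so the two transformation laws do coincide). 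The paper instead verifies the identity only on the highest weight vector: it writes $e_{\beta^\vee}v_{L(\Lambda)}=\tfrac{1}{k_\beta!}x_{k_\beta\beta}(-k_\beta-\Lambda(\beta^\vee))v_{L(\Lambda)}$ via the vertex operator formula \eqref{star} with $q=0$, computes the conformal energy of $\Psi_{k_\beta\beta}(-k_\beta)$ from \eqref{paraen1} to be $k_\beta-k_\beta^2/k_\beta=0$, and then propagates to the whole vacuum space using Lemma \ref{reflemma} (the $\psi$-operators commute with $e_{\alpha^\vee}$ and generate $L(\Lambda)^{\hhat^+}$ from $v_{L(\Lambda)}$) together with \eqref{M3.3.2}. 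Your argument is more self-contained and conceptually cleaner---it needs only the adjoint-action formulas \eqref{com2}--\eqref{com4} and standard Sugawara facts, not the generation statement or the energy formula for higher-charge parafermions, and it actually proves the stronger statement $[L_\Omega(0),e_{\beta^\vee}]=0$ on all of $L(\Lambda)$; the paper's route has the advantage of staying entirely within the quasi-particle/parafermion machinery already set up and of not requiring the explicit Sugawara expressions for $L(0)$ and $L_{\mathbf h}(0)$.
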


\begin{proof}
Vertex operator formula \eqref{star} for $q=0$ gives
$$
e_{\beta\sp\vee}v_{L(\Lambda)}=\tfrac{1}{k_\beta!}x_{k_\beta\beta}(-k_\beta-\Lambda(\beta\sp\vee))v_{L(\Lambda)}.
$$
Since
\beq\label{eq328}
L_\Omega(0)\, v_{L(\Lambda)}= c_\Lambda \, v_{L(\Lambda)}
\eeq
for some complex number $c_\Lambda$, from \eqref{paraen1} it follows
\begin{align*}
&L_\Omega(0) \proj \cdot x_{k_\beta\beta}(-k_\beta-\Lambda(\beta\sp\vee))v_{L(\Lambda)}\\
=&\ts L_\Omega(0) \,\mathcal Z_{k_\beta\beta}(-k_\beta-\Lambda(\beta\sp\vee))v_{L(\Lambda)}\\
=&\ts L_\Omega(0)\, \Psi_{k_\beta\beta}(-k_\beta)v_{L(\Lambda)}\\
=&\ts (k_\beta-\frac{{k_\beta}^2}{k_\beta}+c_\Lambda) \Psi_{k_\beta\beta}(-k_\beta) v_{L(\Lambda)}\\
=&\ts c_\Lambda \proj \cdot x_{k_\beta\beta}(-k_\beta-\Lambda(\beta\sp\vee))v_{L(\Lambda)}.
\end{align*}
Hence we have 
\beq\label{M3.3.7}
L_\Omega(0) e_{\beta\sp\vee}v_{L(\Lambda)}= e_{\beta\sp\vee}L_\Omega(0) v_{L(\Lambda)}=c_\Lambda e_{\beta\sp\vee}v_{L(\Lambda)} .
\eeq
By Lemma \ref{reflemma} the action of parafermionic currents $\psi_\beta(m)$ on $v_{L(\Lambda)}$ generates the para\-fer\-mi\-onic space and commutes with $e_{\alpha\sp\vee}$ for all $ \alpha\in R$. Hence lemma follows from
\eqref{M3.3.7} and \eqref{M3.3.2}.
\end{proof}
The above lemma implies that $L_\Omega(0)$ is a parafermionic degree operator on the para\-fer\-mi\-onic space  $L(\Lambda)^{\hhat^+}_{Q(k)}$ and we want to determine a formula for the corresponding
parafermionic character
\beq\label{M3.3.11}
\ch  L(\Lambda)^{\hhat^+}_{Q(k)}=q\sp{-c_\Lambda}\,\text{tr\,} q\sp{L_\Omega(0)},
\eeq
with $c_\Lambda$ as in \eqref{eq328}.
Consider an arbitrary quasi-particle monomial 
\beq\label{monomic}
 \,x_{n_{r_{l}^{(1)},l}\alpha_{l}}(m_{r_{l}^{(1)},l}) \ldots  x_{n_{1,l}\alpha_{l}}(m_{1,l})\ldots 
x_{n_{r_{1}^{(1)},1}\alpha_{1}}(m_{r_{1}^{(1)},1}) \ldots  x_{n_{1,1}\alpha_{1}}(m_{1,1}) 
\in
B'_{W_{L( \Lambda )}}.
\eeq
Note that \eqref{monomic}  does not contain any quasi-particles of color $i$ and charge $k_{\alpha_{i}}$ for   $i=1,\ldots ,l$. Denote by
$$
\Rc'=(n_{r_l^{(1)},l},\ldots ,n_{1,1}),\quad
\Rc=(r_l^{(1)},\ldots ,r_{1}^{(k_{\alpha_1}-1)})
\fand 
\Ec=(m_{r_l^{(1)},l},\ldots ,m_{1,1})
$$
  its charge-type, dual-charge-type and energy-type respectively.
Next,  define the elements  $\Pc_i =(p_i^{(1)},\ldots ,p_{i}^{(k_{\alpha_i}-1)})$ by
$$
\Pc_i=(r_{i}^{(1)}-r_{i}^{(2)},r_{i}^{(2)}-r_{i}^{(3)},\ldots,r_{i}^{(k_{\alpha_i}-2)}-r_{i}^{(k_{\alpha_i}-1)},r_{i}^{(k_{\alpha_i}-1)}),\quad\text{where }i=1,\ldots ,l.
$$
Clearly, the numbers    $p_{i}^{(m)}$ denote  the number of quasi-particles of color $i$ and  charge $m$ in  quasi-particle monomial \eqref{monomic}.
Consider the parafermionic space basis, as given by Theorem \ref{parafermionbasis}.
By   \eqref{paraen2}, the conformal energy of the basis vector
\beq\label{basisv3}
  \psi_{\Rc'}(\Ec)  v_{ L(\Lambda) }
	=  \psi_{(n_{r_l^{(1)},l},\ldots ,n_{1,1})}(m_{r_l^{(1)},l},\ldots ,m_{1,1})  v_{ L(\Lambda) },
\eeq
which
 corresponds to quasi-particle monomial  \eqref{monomic}, 
is equal to 
\begin{align}
-\sum_{i=1}^l\sum_{u=1}^{r_i^{(1)}}m_{u,i}
&-\sum_{i=1}^l\sum_{u=1}^{r_i^{(1)}} 
\left(
\frac{n_{u,i}^2}{k_{\alpha_i}}+\frac{1}{k}\textstyle
\left<
n_{u,i}\alpha_i,
\sum_{s=1}^{u-1} n_{s,i}\alpha_i+\sum_{p=1}^{i-1}\sum_{s=1}^{r_p^{(1)}}n_{s,p}\alpha_p
\right>
\right)\label{paraen3}\\
&-\frac{k_j}{k_{\alpha_j}} \sum_{t=1}^{k_{\alpha_j} -1} t p_j^{(t)},\non
\end{align}
where the third summand is due to the identity
\beq\label{paraen51}
\textstyle
-\frac{1}{k}\textstyle
\left<\textstyle \sum_{t=1}^{k_{\alpha_j} -1} t p_j^{(t)},\Lambda \right>
=-\frac{k_j}{k_{\alpha_j}} \textstyle\sum_{t=1}^{k_{\alpha_j} -1} t p_j^{(t)}.
\eeq
Let
$$
K_{\Pc}(q) = q^{\frac{1}{2}\sum_{i,r=1}^l \sum_{m=1}^{k_{\alpha_i}-1} \sum_{n=1}^{k_{\alpha_r}-1}   K_{ir}^{mn} p_i^{(m)} p_r^{(n)}  },\quad\text{where}\quad K_{ir}^{mn} = G_{ir}^{mn} -\frac{mn}{k} \left<\alpha_i,\alpha_r\right>
$$ 
and the numbers $G_{ir}^{mn}$ are given by \eqref{Gij}. Define
$$
C_{\Pc}(q)=B'_{\Pc}(q)\ts q^{-\frac{k_j}{k_{\alpha_j}} \sum_{t=1}^{k_{\alpha_j} -1} t p_j^{(t)} } ,
$$
where $B'_{\Pc}(q)$ is given by \eqref{charGij3}.
\begin{thm}\label{thm38}
For any highest weight $\Lambda$ as in \eqref{rect}  we have
\beq\label{maincharp}
\ch L(\Lambda)^{\hhat^+}_{Q(k)}= \sum_{\Pc}D'_{\Pc}(q)\ts C_{\Pc}(q)\ts K_{\Pc}(q),
\eeq
where the sum goes over all finite sequences $\Pc=(\Pc_l,\ldots ,\Pc_1)$ of $k_{\alpha_1}+\ldots + k_{\alpha_l}-l$ nonnegative integers and $D'_{\Pc}(q)$ is given by \eqref{charGij2}.
\end{thm}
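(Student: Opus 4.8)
The plan is to read the character directly off the basis furnished by Theorem~\ref{parafermionbasis}, exploiting that its elements are eigenvectors of the parafermionic grading operator $L_\Omega(0)$. By Theorem~\ref{parafermionbasis} the vectors $\psi_{\Rc'}(\Ec)\ts v_{L(\Lambda)}$, whose charge-type $\Rc'$ and energy-type $\Ec$ satisfy the difference and initial conditions for $B'_{W_{L(\Lambda)}}$, form a basis of $L(\Lambda)^{\hhat^+}_{Q(k)}$; these are in bijection with the quasi-particle monomials \eqref{monomic}. The preceding lemmas, together with \eqref{paraen3} and $L_\Omega(0)\ts v_{L(\Lambda)}=c_\Lambda\ts v_{L(\Lambda)}$ as in \eqref{eq328}, show that each such vector is an eigenvector of $L_\Omega(0)$ with eigenvalue $c_\Lambda+\en\psi_{\Rc'}(\Ec)$, where the conformal energy $\en\psi_{\Rc'}(\Ec)$ is exactly \eqref{paraen3}. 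Substituting into the definition \eqref{M3.3.11}, the prefactor $q^{-c_\Lambda}$ cancels the contribution of $c_\Lambda$, whence
$$
\ch L(\Lambda)^{\hhat^+}_{Q(k)}=\sum q^{\en\psi_{\Rc'}(\Ec)},
$$
the sum ranging over all admissible pairs $(\Rc',\Ec)$.

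Next I would separate the energy dependence from the charge dependence in \eqref{paraen3}, writing $\en\psi_{\Rc'}(\Ec)=\en b-E_{\Pc}$, where $\en b=-\sum_{i,u}m_{u,i}$ is the total energy of the associated monomial and depends only on $\Ec$, while the remaining two summands of \eqref{paraen3} assemble into a quantity $E_{\Pc}$ depending only on the charges, that is, only on $\Pc$. For a fixed charge-type $\Rc'$ the admissible energy-types $\Ec$ and the $q$-weight $\en b$ are precisely those governing the principal-subspace character $\ch\ts\spn\mathfrak{B}'_{W_{L(\Lambda)}}$; hence summing $q^{\en b}$ over all $\Ec$ compatible with a given $\Pc$ reproduces, under the specialization $y_1=\ldots=y_l=1$, the corresponding summand $D'_{\Pc}(q)\ts G'_{\Pc}(q)\ts B'_{\Pc}(q)$ of \eqref{charGij}. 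This yields
$$
\ch L(\Lambda)^{\hhat^+}_{Q(k)}=\sum_{\Pc}q^{-E_{\Pc}}\ts D'_{\Pc}(q)\ts G'_{\Pc}(q)\ts B'_{\Pc}(q).
$$

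It then remains to verify the finite algebraic identity $q^{-E_{\Pc}}G'_{\Pc}(q)B'_{\Pc}(q)=C_{\Pc}(q)K_{\Pc}(q)$. After cancelling the common factor $B'_{\Pc}(q)$ together with the two $\frac{k_j}{k_{\alpha_j}}\sum_t t\ts p_j^{(t)}$ contributions, which agree by the definition of $C_{\Pc}(q)$, and inserting $K_{ir}^{mn}=G_{ir}^{mn}-\frac{mn}{k}\langle\alpha_i,\alpha_r\rangle$, the claim reduces to
$$
\frac{1}{2k}\sum_{i,r=1}^l\sum_{m,n}mn\,\langle\alpha_i,\alpha_r\rangle\,p_i^{(m)}p_r^{(n)}=\sum_{i=1}^l\sum_{u=1}^{r_i^{(1)}}\left(\frac{n_{u,i}^2}{k_{\alpha_i}}+\frac1k\Big\langle n_{u,i}\alpha_i,\,\textstyle\sum_{s=1}^{u-1}n_{s,i}\alpha_i+\sum_{p=1}^{i-1}\sum_s n_{s,p}\alpha_p\Big\rangle\Big.\right).
$$
I would prove this by the bookkeeping relation $\sum_m m\ts p_i^{(m)}=\chgi b=\sum_u n_{u,i}$: using $\langle\alpha_i,\alpha_i\rangle=2k/k_{\alpha_i}$, the diagonal terms $n_{u,i}^2/k_{\alpha_i}$ together with the within-color products complete the square $\frac{1}{2k}\langle\alpha_i,\alpha_i\rangle(\chgi b)^2$, while the cross-color terms give $\frac1k\sum_{p<i}\langle\alpha_i,\alpha_p\rangle\,\chgi b\,\chg_p b$; summing over both orderings of the color indices recovers $\frac{1}{2k}\sum_{i,r}\langle\alpha_i,\alpha_r\rangle\,\chgi b\,\chg_r b$, which is the left-hand side since $\sum_m m\ts p_i^{(m)}=\chgi b$.

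The main obstacle is conceptual rather than computational: one must be certain that the admissible energy-types and the $q$-weight $\en b$ attached to the parafermionic basis coincide exactly with those underlying \eqref{charGij}, so that the inner energy sum may be replaced verbatim by the principal-subspace summand; granting this, the remaining identity is the routine completion-of-the-square verification above.
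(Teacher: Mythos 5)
Your proposal is correct and follows essentially the same route as the paper: both factor the conformal energy \eqref{paraen3} into the energy part, which is accounted for by the principal-subspace character \eqref{charGij}, plus a charge-dependent parafermionic shift, and then match that shift against the ratio of $K_{\Pc}(q)\ts C_{\Pc}(q)$ to $G'_{\Pc}(q)\ts B'_{\Pc}(q)$. The only difference is organizational: the paper verifies the quadratic identity pairwise, fixing colors and charges $(i,r,m,n)$ and computing each pair's contribution separately, whereas you verify it globally by completing the square via $\sum_m m\ts p_i^{(m)}=\chgi b$ --- a correct and slightly cleaner bookkeeping of the same computation.
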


\begin{proof}
By
\eqref{charGij}
the product
$$
 \sum_{\Pc}D'_{\Pc}(q)\ts G'_{\Pc}(q) \ts B'_{\Pc}(q),
$$
where $G'_{\Pc}(q)$ is given by \eqref{charGij2},
counts all quasi-particle monomials \eqref{monomic}, i.e., in terms of conformal energy, it corresponds to the first term 
\beq\label{paraen7}
-\sum_{i=1}^l\sum_{u=1}^{r_i^{(1)}}m_{u,i}
\eeq
 in \eqref{paraen3}. Therefore, in order to verify   character formula \eqref{maincharp}, it is sufficient to check that  
\beq\label{tri}
 \frac{K_{\Pc}(q)\ts  C_{\Pc}(q)}{ \ts G'_{\Pc}(q)  \ts B'_{\Pc}(q) } 
 =q^{-\frac{1}{2}\sum_{i,r=1}^l \sum_{m=1}^{k_{\alpha_i}-1} \sum_{n=1}^{k_{\alpha_r}-1}   \frac{mn}{k} \left<\alpha_i,\alpha_r\right> p_i^{(m)} p_r^{(n)}
-\frac{k_j}{k_{\alpha_j}} \sum_{t=1}^{k_{\alpha_j} -1} t p_j^{(t)} }
\eeq
corresponds to the   parafermionic shift, i.e. to the remaining terms in \eqref{paraen3}.

We now consider the  parafermionic pairs  which   consist  of a parafermion of color $i$ and charge $m$ and a parafermion of color $r$ and charge $n$. More specifically, for fixed 
\beq\label{indices5}
i,r=1,\ldots ,l, \quad m=1,\ldots, k_{\alpha_i}-1 \fand n=1,\ldots, k_{\alpha_r}-1 
\eeq
we compute the
 contribution of all such pairs to the conformal energy of  basis vector \eqref{basisv3}. 
In order to prove the theorem, we will demonstrate that, for fixed $\Pc$,   the sum of all such contributions for $i,r,m,n$ as in \eqref{indices5}   coincides with both  the power of $q$ in \eqref{tri} and     the difference of \eqref{paraen3} and   \eqref{paraen7}.

Fix integers $i,r,m,n$ as in \eqref{indices5}.

\noindent (a) Suppose that $i\neq r$. 
By \eqref{paraen3}, the contribution to the conformal energy of   the basis monomial $\psi_{\Rc'}(\Ec)$ in \eqref{basisv3} equals
\beq\label{paraen4}
-\sum_{u=r_i^{(m+1)}+1}^{r_i^{(m)}} 
\frac{1}{k}\textstyle
\left<
n_{u,i}\alpha_i,
\sum_{s=r_r^{(n+1)}+1}^{r_r^{(n)}}n_{s,r}\alpha_r
\right>
=\displaystyle
-\frac{mn}{k}\left<\alpha_i,\alpha_r\right>p_i^{(m)}p_r^{(n)}
\eeq
and the right hand side coincides with the corresponding term in the power of $q$ in \eqref{tri}.

\noindent (b) Suppose that $i= r$ and $m\neq n$.
By \eqref{paraen3}, the contribution   of the basis monomial $\psi_{\Rc'}(\Ec)$ to the conformal energy of     \eqref{basisv3}   equals
\beq\label{paraen5}
-\sum_{u=r_i^{(m+1)}+1}^{r_i^{(m)}} 
 \frac{1}{k}\textstyle
\left<
n_{u,i}\alpha_i,
\sum_{s=r_i^{(n+1)}+1}^{r_i^{(n)}}  n_{s,i}\alpha_i
\right>
=\displaystyle
-\frac{mn}{k}\left<\alpha_i,\alpha_i\right>p_i^{(m)}p_i^{(n)}
\eeq
and the right hand side coincides with the corresponding term in the power of $q$ in \eqref{tri}.

\noindent (c) Suppose that $i= r$ and $m= n$.
By \eqref{paraen3}, the contribution  of the basis monomial $\psi_{\Rc'}(\Ec)$ to the conformal energy of     \eqref{basisv3}   equals
\beq\label{paraen6}
-\sum_{u=r_i^{(m+1)}+1}^{r_i^{(m)}}  
\left(
\frac{n_{u,i}^2}{k_{\alpha_i}}+\frac{1}{k}\textstyle
\left<
n_{u,i}\alpha_i,
\sum_{s=r_i^{(m+1)}+1}^{u-1} n_{s,i}\alpha_i
\right>
\right)
=
\displaystyle
-\frac{m^2}{2k}\left<\alpha_i,\alpha_i\right>\left(p_i^{(m)}\right)^2
.
\eeq
and the right hand side coincides with the corresponding term in the power of $q$ in \eqref{tri}.

In addition to \eqref{paraen4}--\eqref{paraen6}, as the monomial $\psi_{\Rc'}(\Ec)$ in  \eqref{basisv3} is applied on the highest weight vector $v_{\Lambda}$, its terms of color $j$ contribute to the conformal energy of     \eqref{basisv3}  by
\beq\label{paraen49}
- \frac{k_j}{k_{\alpha_j}} \textstyle\sum_{t=1}^{k_{\alpha_j} -1} t p_j^{(t)},
\eeq
which coincides with the corresponding term in the power of $q$ in \eqref{tri}.
Indeed, as indicated above,  this follows from  identity
\eqref{paraen51}.

Finally, the theorem follows by comparing the sum over all indices \eqref{indices5}
of  \eqref{paraen4}--\eqref{paraen6} and \eqref{paraen49}
with    the power of $q$ in \eqref{tri}.
\end{proof}

\begin{ax}
By definition \eqref{parafermion2} the parafermionic space $L( \Lambda )_{Q(k)}^{\hhat^+}$ is a sum of $k_{\alpha_1}\cdots k_{\alpha_l}$ $\mathfrak h$-weight subspaces  $L( \Lambda )_{\lambda}^{\hhat^+}$ of the vacuum space $L( \Lambda )^{\hhat^+}$. Since each subspace $L(\Lambda )_{\lambda}^{\hhat^+}$ has a basis consisting of eigenvectors for $L_\Omega(0)$, we may consider the restriction $L_\Omega(0)\vert_{L( \Lambda )_{\lambda}^{\hhat^+}}$ of $L_\Omega(0)$ on $L( \Lambda)_{\lambda}^{\hhat^+}$ and the corresponding  character
$$
\chi_\lambda\sp\Lambda=\ch  L(\Lambda)^{\hhat^+}_{\lambda}=q\sp{-c_\Lambda}\,\text{tr\,} q\sp{L_\Omega(0)\vert_{L( \Lambda )_{\lambda}^{\hhat^+}}},
$$
with $c_\Lambda$ as in \eqref{eq328}.
Formula (18) for $\chi_\lambda\sp\Lambda$ in \cite{Gep2} is a generalization of the Kuniba--Nakanishi--Suzuki character for the parafermionic space $L( k \Lambda_0 )_{Q(k)}^{\hhat^+}$ to the character of parafermionic space $L(  \Lambda )_{Q(k)}^{\hhat^+}$ for rectangular $\Lambda=k_0\Lambda_0 + k_j\Lambda_j$, $j$ is as in \eqref{jotovi}, where $k_0 \geqslant 1 $, $k_j=1$  in the cases when $\Lambda_j$ is the fundamental weight corresponding to the short root $\alpha_j$ and $k_j \geqslant 1$  in the cases when $\Lambda_j$ is the fundamental weight corresponding to the long root $\alpha_j$. Our formula \eqref{maincharp} in Theorem \ref{thm38} is a generalization of Gepner's formula to the character of parafermionic space $L(  \Lambda )_{Q(k)}^{\hhat^+}$, where $\Lambda=k_0\Lambda_0 + k_j\Lambda_j$, $j$ is as in \eqref{jotovi} and $k_0, k_j \geqslant 1 $.
\end{ax}

\section*{Acknowledgement}
The authors would like to express their sincere gratitude to the anonymous referee for careful reading and many valuable comments and suggestions which helped them to improve the manuscript. 
This work has been supported in part by Croatian Science Foundation under the project 8488.
The first and the third author are partially supported by the QuantiXLie Centre of Excellence, a project cofinanced by the Croatian Government and European Union through the European Regional Development Fund - the Competitiveness and Cohesion Operational Programme (Grant KK.01.1.1.01.0004).

\linespread{1.0}

\end{document}